\documentclass[12pt,draftcls,onecolumn]{IEEEtran}
\usepackage{cite}
\usepackage{url}
\usepackage{amsmath,amssymb,amsfonts}
\usepackage{multirow}
\usepackage{amsthm}
\usepackage{graphicx}
\usepackage{textcomp}
\usepackage{epsfig}
\usepackage{algorithm}
\usepackage{algorithmic}
\usepackage{epstopdf}
\usepackage{enumerate}
\usepackage{float}
\usepackage{graphics,graphicx}
\usepackage{subfigure}
\usepackage{array}
\usepackage{mathtools}
\usepackage{color}
\usepackage{mathtools}
\DeclareGraphicsExtensions{.png}
\graphicspath{{./Pics/}}
\usepackage{xspace}
\usepackage{tcolorbox}
\usepackage{dashbox}
\usepackage{bm} 
\usepackage{caption}
\usepackage{cite}
\usepackage{amsmath,amssymb,amsfonts}
\usepackage{algorithmic}
\usepackage{graphicx}
\usepackage{textcomp}

\usepackage{cite}
\usepackage{url}
\usepackage{amsmath,amssymb,amsfonts}
\usepackage{multirow}
\usepackage{algorithmic}
\usepackage{cleveref}
\usepackage{textcomp}
\usepackage{epsfig}
\usepackage{algorithmic}
\usepackage{epstopdf}
\usepackage{float}
\usepackage{caption}  
\usepackage{booktabs} 
\usepackage{pifont}
\usepackage{array}
\usepackage{mathtools}
\usepackage[justification=centering]{caption}

\renewcommand{\Re}{{\mathbb{R}}}
\newcommand{\bx}{{\mathbf{x}}}
\newcommand{\bB}{{\mathbf{B}}}

\newcommand{\bG}{{\mathbf{G}}}
\newcommand{\bL}{{\mathbf{L}}}

\newcommand{\bq}{{\mathbf{q}}}
\newcommand{\bI}{{\mathbf{I}}}
\newcommand{\bK}{{\mathbf{K}}}

\newcommand{\bg}{{\mathbf{g}}}
\newcommand{\bs}{{\mathbf{s}}}
\newcommand{\bH}{{\mathbf{H}}}
\newcommand{\bD}{{\mathbf{D}}}
\newcommand{\bW}{{\mathbf{W}}}

\newtheorem{lemma}{Lemma}
\newtheorem{theorem}{Theorem}

\newtheorem{remark}{Remark}
\newtheorem{assumption}{Assumption}
\newtheorem{proposition}{Proposition}

\usepackage[justification=centering]{caption}
\allowdisplaybreaks[4]
\begin{document}
	
	\title{\LARGE \bf A Unifying Primal-Dual Proximal Framework for Distributed Nonconvex Optimization}
\author{Zichong Ou and Jie Lu
\thanks{Zichong Ou and Jie Lu are with the School of Information Science and Technology, Shanghaitech University, 201210 Shanghai, China. J. Lu is also with the Shanghai Engineering Research Center of Energy Efficient and Custon AI IC, 201210 Shanghai, China. Email: {\tt\small ouzch, lujie@shanghaitech.edu.cn}.}
}
	\maketitle
	
	\begin{abstract}
We consider distributed nonconvex optimization over an undirected network, where each node privately possesses its local objective and communicates exclusively with its neighboring nodes, striving to collectively achieve a common optimal solution. To handle the nonconvexity of the objective, we linearize the augmented Lagrangian function and introduce a time-varying proximal term. This approach leads to a Unifying Primal-Dual Proximal (UPP) framework that unifies a variety of existing first-order and second-order methods. Building on this framework, we further derive two specialized realizations with different communication strategies, namely UPP-MC and UPP-SC. We prove that both UPP-MC and UPP-SC achieve stationary solutions for nonconvex smooth problems at a sublinear rate. Furthermore, under the additional Polyak-{\L}ojasiewics (P-{\L}) condition, UPP-MC is linearly convergent to the global optimum. These convergence results provide new or improved guarantees for many existing methods that can be viewed as specializations of UPP-MC or UPP-SC. To further optimize the mixing process, we incorporate Chebyshev acceleration into UPP-SC, resulting in UPP-SC-OPT, which attains an optimal communication complexity bound. Extensive experiments across diverse network topologies demonstrate that our proposed algorithms outperform state-of-the-art methods in both convergence speed and communication efficiency.
\end{abstract}

\begin{IEEEkeywords}
Distributed optimization, nonconvex optimization, Chebyshev acceleration.
\end{IEEEkeywords}

	
\section{Introduction}
\label{section: introduction}

This article delves into minimizing a sum-utility function $f$, formulated as:
\begin{equation}
\min _{x \in \mathbb{R}^{d}} f(x)= \sum_{i=1}^{N} f_{i}(x), \label{p1}
\end{equation}
where each $f_i:\mathbb{R}^d \rightarrow \mathbb{R}$ is continuously differentiable and the global objective $f(x)$ is $\bar{M}$-smooth for some $\bar{M}>0$. In the realm of distributed optimization, we consider solving this problem over an $N$-node, connected multi-agent system, where each node $i$ retains its local objective $f_i$ and all the nodes cooperatively solve the minimization problem~\eqref{p1}. 

When the local objectives $f_i$ possess convexity, a multitude of distributed first-order algorithms have been devised, including the gradient-based methods \cite{shi_extra_2015,AugDGM2015,nedic_achieving_2017} and primal-dual methods \cite{xu_bregman_2018,ADMMconvex2014,ADMMconvex2017}. Specifically, EXTRA \cite{shi_extra_2015} ensures sublinear convergence for convex objectives and linear convergence under strong convexity. Aug-DGM \cite{AugDGM2015} and DIGing \cite{nedic_achieving_2017}—both leveraging gradient tracking techniques—also achieve analogous convergence properties. Moreover, ID-FPPS \cite{xu_bregman_2018} relaxes the consensus constraint by addressing an Augmented Lagrangian (AL) dual problem. Concurrently, methods such as \cite{ADMMconvex2014,ADMMconvex2017} extend the Alternating Direction Method of Multipliers (ADMM) to the distributed consensus setting. To systematically analyze these algorithms, unifying primal-dual frameworks such as PUDA \cite{alghunaim_decentralized_2021} and DAMM \cite{Wu2020AUA} are emerged, which generalize the aforementioned first-order methods including \cite{shi_extra_2015,AugDGM2015,nedic_achieving_2017,xu_bregman_2018}. In particular, DAMM employs time-varying surrogate functions and broadens its coverage by additionally encompassing second-order methods including \cite{mokhtari_dqm_2016,wu_second-order_2021}. 

While convex optimization provides theoretical guarantees, modeling real-world problems such as large-scale machine learning \cite{NIPS2015_452bf208}, empirical risk minimization \cite{doi:10.1137/16M1080173}, and multi-robot surveillance \cite{CARNEVALE2024111767} using convex formulations risks oversimplifying inherent nonconvex structures. To address this limitation, a number of distributed nonconvex optimization algorithms \cite{hong2016convergence,hong_distributed_2018,mancino2023decentralized,yi2022sublinear,yi2021linear,alghunaim_unified_2022,hong_prox-pda_2017,sun2018distributed,sun_distributed_2019} have been developed to tackle nonconvex objectives. In particular, \cite{hong2016convergence} demonstrates that the classical ADMM converges at a sublinear rate of $\mathcal{O}(1/T)$ (where $T$ denotes the total number of iterations) to a stationary solution under a master-slave network architecture. Subsequent advancements extend ADMM to asynchronous implementations \cite{hong_distributed_2018} and generalize it to arbitrary network topologies via inexact local updates \cite{mancino2023decentralized} or primal linearization of local objectives \cite{yi2022sublinear}. On the other hand, AL-type primal-dual methods iteratively minimize AL functions through strongly convex surrogate approximations \cite{alghunaim_unified_2022,hong_prox-pda_2017,sun2018distributed,sun_distributed_2019}, all achieving a canonical sublinear convergence rate $\mathcal{O}(1/T)$ to the stationarity. Notably, under the Polyak-{\L}ojasiewicz (P-{\L}) condition—a relaxation of strong convexity—several algorithms \cite{yi2022sublinear,yi2021linear,alghunaim_unified_2022} attain linear convergence to global optima.

Most of the aforementioned algorithms \cite{hong2016convergence,hong_distributed_2018,yi2022sublinear,yi2021linear,hong_prox-pda_2017} adopt one-communication-one-computation update paradigm, where each local computation is followed by a communication round. This sequential interaction pattern inherently creates computational bottlenecks, particularly in large-scale or sparse networks, where communication efficiency dominates execution time. To characterize the influence of network topology on algorithmic performance, we define $\gamma$ as the condition number of the graph Laplacian matrix—the ratio between its largest and smallest non-zero eigenvalues. Notably, $\gamma$ increases with sparser network topologies, thereby further exacerbating communication inefficiencies. To address this critical limitation and improve communication efficiency, Chebyshev acceleration has emerged as a powerful technique for reducing communication overhead \cite{arioli2014chebyshev}. By leveraging Chebyshev polynomials, \cite{scaman2017optimal} and \cite{xu2020accelerated} develop communication-optimal algorithms that achieve the lower communication complexity bounds for strongly convex smooth and convex smooth problems, respectively. For nonconvex smooth objectives, recent advances such as ADAPD-OG-MC \cite{mancino2023decentralized} and xFILTER \cite{sun_distributed_2019} are shown to reach an $\epsilon$-stationary solution within $\mathcal{O}(\bar{M}\sqrt{\gamma}/\epsilon)$ communication rounds—a complexity bound proven to be optimal for first-order algorithms that merely transmit local decisions over the network.

In this paper, we propose a \emph{\underline{U}nifying \underline{P}rimal-dual \underline{P}roximal} (UPP) framework for distributed nonconvex optimization. The framework innovatively combines three key components: ($\romannumeral1$) a first-order approximation of the AL function with consensus penalties in primal updates; ($\romannumeral2$) a time-varying proximal term strategically designed to enable mixing acceleration; and ($\romannumeral3$) flexible dual ascent mechanisms with tunable parameters to ensure algorithmic generality. Through distinct parameter configurations, UPP systematically generates two distributed realizations: UPP-MC (\underline{M}ulti-inner-loop \underline{C}ommunication) and UPP-SC (\underline{S}ingle-inner-loop \underline{C}ommunication). The main contributions of this work are summarized as follows:
\begin{enumerate}
    \item[1)] We construct UPP as a unifying framework that encompasses diverse distributed optimization algorithms. By appropriately selecting parameters, UPP-MC subsumes a collection of first-order algorithms, including distributed convex optimization methods (e.g., \cite{shi_extra_2015,AugDGM2015,nedic_achieving_2017,xu_bregman_2018}) and advanced nonconvex optimization algorithms (e.g., \cite{yi2022sublinear,yi2021linear,alghunaim_unified_2022,hong_prox-pda_2017,sun2018distributed}). Moreover, UPP-SC is able to generalize a number of second-order algorithms (e.g., \cite{mokhtari_dqm_2016,wu_second-order_2021}).
    \item[2)] We show that both UPP-MC and UPP-SC achieve $\mathcal{O}(1/T)$ sublinear convergence rates toward stationarity for nonconvex smooth problems. This extends the convergence guarantees for convex optimization methods in \cite{shi_extra_2015,AugDGM2015,xu_bregman_2018,mokhtari_dqm_2016,wu_second-order_2021} to handle \textit{nonconvex} problems. Additionally, under the P-{\L} condition, UPP-MC achieves linear convergence to the global optimum, which establishes new theoretical results for the existing nonconvex methods in \cite{alghunaim_unified_2022,hong_prox-pda_2017,sun2018distributed}.
    \item[3)] We incorporate Chebyshev acceleration into UPP-SC, yielding its accelerated version UPP-SC-OPT. It improves the convergence guarantees with tighter dependency on both the smoothness parameter $\bar{M}$ and the graph Laplacian condition number $\gamma$. Specifically, UPP-SC-OPT attains an $\epsilon$-stationary solution within $\mathcal{O}(\bar{M}\sqrt{\gamma}/\epsilon)$ communication rounds—a significant improvement over the existing methods such as \cite{yi2022sublinear, yi2021linear,alghunaim_unified_2022,hong_prox-pda_2017}. This aligns with the optimal communication complexity bound established in \cite{sun_distributed_2019, mancino2023decentralized} for nonconvex, smooth problems. A comprehensive comparison is provided in Table~\ref{tab:convergence rate}. Compared to the communication-optimal algorithms in \cite{sun_distributed_2019, mancino2023decentralized}, UPP-SC-OPT possesses fewer parameters and variables, as well as a more concise update form.
    \item[4)] We evaluate the practical performance of our proposed algorithms through numerical experiments. Our simulations are conducted on graphs with varying sparsity levels, showcasing the superior performance of our algorithms in terms of both convergence speed and communication cost against the state-of-the-art methods.
\end{enumerate}

The rest of this article is organized as follows: Section~\ref{section: problem formulation} reformulates the nonconvex optimization problem, Section~\ref{section: algorithm development} describes the development of UPP, and its specializations are exhibited in Section~\ref{section: specializations}. Section~\ref{section: convergence analysis} provides the convergence rates and communication complexity bounds for our proposed methods. Section~\ref{section: numerical results} presents the numerical experiments. Finally, Section~\ref{section: conclusion} concludes this article.

\begin{table}[t]
    \centering
    \begin{tabular}{ccc}
        \toprule
        \multirow{2}{*}{Method} & Convergence & Communication \\
        & rate & complexity \\
        \midrule
        L-ADMM \cite{yi2022sublinear} & $\mathcal{O}(\bar{M}^2\gamma^3/T)$ & $\mathcal{O}(\bar{M}^2\gamma^3/\epsilon)$ \\
        PFOPDA \cite{yi2021linear} & $\mathcal{O}(\bar{M}^2\gamma^3/ T)$& $\mathcal{O}(\bar{M}^2\gamma^3/ \epsilon)$ \\
        SUDA \cite{alghunaim_unified_2022} & $\mathcal{O}(\bar{M}^4\gamma^3/T)$ & $\mathcal{O}(\bar{M}^4\gamma^3/\epsilon)$ \\
        Prox-PDA \cite{hong_prox-pda_2017} & $\mathcal{O}(\bar{M}^2\gamma^2/T)$ & $\mathcal{O}(\bar{M}^2\gamma^2/\epsilon)$\\
        ADAPD-OG \cite{mancino2023decentralized} & $\mathcal{O}(\bar{M}\gamma^2/T)$ & $\mathcal{O}(\bar{M}\gamma^2/\epsilon)$ \\
        ADAPD-OG-MC \cite{mancino2023decentralized} & $\mathcal{O}(\bar{M}/T)$ & $\mathcal{O}(\bar{M}\sqrt{\gamma}/\epsilon)$ (\textbf{optimal}) \\
        xFILTER \cite{sun_distributed_2019} & $\mathcal{O}(\bar{M}/T)$ & $\mathcal{O}(\bar{M}\sqrt{\gamma}/\epsilon)$ (\textbf{optimal}) \\
        \textbf{UPP-SC-OPT} & $\mathcal{O}(\bar{M}/T)$ & $\mathcal{O}(\bar{M}\sqrt{\gamma}/\epsilon)$ (\textbf{optimal})\\
        \bottomrule
    \end{tabular}
    \caption{Convergence rates and communication complexity bounds of related works, where the communication complexity measures the communication rounds needed to reach $\epsilon$-stationarity. Notably, the graph Laplacian matrix here has a unit spectral radius.}
    \label{tab:convergence rate}
\end{table}


\emph{Notation:} For any differentiable function $f$, $\nabla f$ denotes its gradient. The null space and range space of a matrix argument are represented by $\operatorname{Null}(\cdot)$ and $\operatorname{span}(\cdot)$, respectively. We denote the $n$-dimensional column vector of all ones (zeros) by $\mathbf{1}_n$ ($\mathbf{0}_n$), and the $n$-dimensional identity and zero matrix by $\mathbf{I}_n$ and $\mathbf{O}_n$, respectively. The ceiling of $a\in\mathbb{R}$ is denoted by $\lceil a \rceil$. For $a_i\in \mathbb{R}$ ($i=1,\dots,n$), $\operatorname{diag}(a_1,\cdots,a_n)$ is the $n$-dimensional diagonal matrix with $a_i$ as its $i$-th diagonal entry. We use $\langle \cdot,\cdot \rangle$, $\otimes$ and $\|\cdot\|$ to denote the Euclidean inner product, Kronecker product and $\ell_2$ norm, respectively. For $\mathbf{A},\mathbf{B}\in \mathbb{R}^{d\times d}$, $\mathbf{A}\succ \mathbf{B}$ means $\mathbf{A}-\mathbf{B}$ is positive definite, and $\mathbf{A} \succeq \mathbf{B}$ means $\mathbf{A}-\mathbf{B}$ is positive semi-definite. The Moore-Penrose inverse of a matrix $\mathbf{A}$ is denoted by $\mathbf{A}^{\dagger}$, and $\lambda_i^\mathbf{A}$ is the $i$-th largest eigenvalue of $\mathbf{A}$. For symmetric $\mathbf{A} \succeq \mathbf{O}_d$ and $\mathbf{x}\in \mathbb{R}^d$, the weighted norm is defined as $\|\mathbf{x}\|^2_\mathbf{A}:=\mathbf{x}^{\mathsf{T}}\mathbf{A}\mathbf{x}$.

A preliminary conference version of this paper can be found in \cite{mappro2024Ou}, which involves a special case of UPP-MC.

\section{Problem Formulation}\label{section: problem formulation}
We solve problem~\eqref{p1} in a distributed way over a connected, undirected graph $\mathcal{G} = (\mathcal{V},\mathcal{E})$, where the vertex set $\mathcal{V} = \{1, \dots, N\}$ is the set of $N$ nodes and the edge set $\mathcal{E} \subseteq \{\{i,j\}|i,j \in \mathcal{V},i \neq j\}$ captures the underlying interactions among the nodes. Specifically, each node $i$ communicates only with the neighboring nodes in $\mathcal{N}_i=\{j\in\mathcal{V}:\{i,j\}\in\mathcal{E}\}$.
To solve problem~\eqref{p1} over the graph $\mathcal{G}$, we let each node $i \in \mathcal{V}$ maintain a local estimate $x_i \in \mathbb{R}^d$ of the global decision $x \in \mathbb{R}^d$ in problem~\eqref{p1}. We define 
\begin{equation}
\tilde{f}(\mathbf{x}) := \sum _{i \in \mathcal{V}} f_i(x_i),\quad \mathbf{x} = (x_1^{\mathsf{T}}, \dots, x_N^{\mathsf{T}})^{\mathsf{T}} \in \Re ^{Nd}.
\end{equation}
It has been shown in \cite{Mokhtari2016ADS} that problem~\eqref{p1} can be equivalently transformed  into 
\begin{equation}
\begin{aligned}
\underset{\mathbf{x} \in \mathbb{R}^{N d}}{\operatorname{minimize}}\quad & \tilde{f}(\mathbf{x}) \\
\operatorname{subject\;to}\quad & \bH^{\frac{1}{2}} \mathbf{x}=0,
\end{aligned} \label{p2}
\end{equation}
where the matrix ${\bH}$ is symmetric and positive semi-definite, i.e., $\bH = \bH^{\mathsf{T}}\succeq\mathbf{O}_{Nd}$. Besides, we require $\operatorname{Null}(\bH) = \mathcal{S} := \{\mathbf{x} \in \Re ^{Nd}|x_1 = \cdots = x_N\}$ and the corresponding optimal value coincides with that of problem~\eqref{p1}. Next, we introduce the following assumptions for the problem.
\begin{assumption}\label{assumption smooth}
The global objective $\tilde{f}:\Re^{Nd}\rightarrow\Re$ is $\bar{M}$-smooth for some $\bar{M}> 0$, i.e.,
\begin{equation}
\|\nabla \tilde{f}(\mathbf{x}) - \nabla \tilde{f}(\mathbf{y})\|\leq \bar{M}\|\mathbf{x}-\mathbf{y}\|,\quad\forall \mathbf{x},\mathbf{y}\in \Re^{Nd}.\label{smooth}
\end{equation}
\end{assumption}

\begin{assumption}\label{assumption: optimality}
The optimal set $\mathbb{X}^*$ of problem~\eqref{p1} is nonempty and the optimal value $f^* >-\infty$.
\end{assumption}

Assumptions~\ref{assumption smooth} and~\ref{assumption: optimality} are commonly adopted in distributed nonconvex optimization  (e.g., \cite{sun2018distributed, sun_distributed_2019, yi2021linear, yi2022sublinear}). Also, compared with the smoothness of each $f_i$ assumed in \cite{yi2022sublinear,yi2021linear}, Assumption~\ref{assumption smooth} only requires the smoothness of the global cost function ${f}$ and thus is slightly less restrictive.

\section{Algorithm Development}\label{section: algorithm development}
This section develops a unifying primal-dual proximal framework as well as its two distributed realizations.

\subsection{Unifying primal-dual proximal framework}\label{subsection:algorithm development UPP}
We first consider solving \eqref{p2} in the following primal-dual approach. Let $\mathbf{x}^k\in \mathbb{R}^{Nd}$ and $\mathbf{v}^{k}=((v_1^k)^{\mathsf{T}}, \dots, (v_N^k)^{\mathsf{T}})^{\mathsf{T}}\in \mathbb{R}^{Nd}$ denote the global primal and dual variables at iteration $k \geq 0$, respectively. Then, given $\rho, \theta>0$ and arbitrary initial points $\mathbf{x}^0,\mathbf{v}^0 \in \mathbb{R}^{Nd}$, 
\begin{align}
	\label{argmin approximation primal} \mathbf{x}^{k+1}=&\underset{\mathbf{x} \in \mathbb{R}^{N d}}{\arg \min}\;\tilde{f}(\mathbf{x}^k) + \langle \nabla \tilde{f}(\mathbf{x}^k), \mathbf{x}-\mathbf{x}^k \rangle +\theta(\mathbf{v}^{k})^{\mathsf{T}} \bH^{\frac{1}{2}} \mathbf{x}+\frac{\rho}{2}\|\mathbf{x}\|_{\mathbf{H}}^{2}+ \frac{1}{2} \|\mathbf{x}-\mathbf{x}^k\|^2_{\mathbf{B}^k}, \\ 
	\label{argmin approximation dual} \mathbf{v}^{k+1}=&\mathbf{v}^{k}+\rho \bH^{\frac{1}{2}} \mathbf{x}^{k+1}, \quad \forall k \geq 0.
\end{align}

The primal update \eqref{argmin approximation primal} minimizes an augmented-Lagrangian-like function, where we substitute the objective function by its first-order approximation at $\bx^k$ and insert an additional parameter $\theta>0$ into the Lagrangian multiplier term for enhancing generality. The AL-like function is also incorporated with a proximal term governed by a time-varying symmetric matrix $\mathbf{B}^k\in \mathbb{R}^{Nd \times Nd}$. We require that $\mathbf{B}^k + \rho \mathbf{H} \succ \mathbf{O}_{Nd}$, so that $\mathbf{x}^{k+1}$ in \eqref{argmin approximation primal} is well-defined and uniquely exists. For the dual update \eqref{argmin approximation dual}, we execute a dual ascent step on the AL-like function, with the dual gradient derived from the constraint residual evaluated at the newly computed primal variable $\bx^{k+1}$.

It can be shown that any primal-dual optimum pair $(\mathbf{x}^*,\mathbf{v}^*)$ of problem~\eqref{p2} constitutes a fixed point of \eqref{argmin approximation primal}--\eqref{argmin approximation dual}. To elucidate this, observe from \eqref{argmin approximation primal} that $\mathbf{x}^{k+1}$ uniquely exists and fulfills the first-order optimality condition
\begin{equation}
\label{first-order opt} \nabla \tilde{f}(\mathbf{x}^k) + \theta\bH^{\frac{1}{2}} \mathbf{v}^k + \rho \mathbf{H} \mathbf{x}^{k+1}+ \mathbf{B}^k(\mathbf{x}^{k+1} - \mathbf{x}^k)=0.
\end{equation}
Also note that $(\mathbf{x}^*,\mathbf{v}^*)$ satisfies
\begin{equation}
\nabla \tilde{f}(\mathbf{x}^*)+ \theta\bH^{\frac{1}{2}} \mathbf{v}^*=0. \label{fixed point}
\end{equation}
Once $(\mathbf{x}^k,\mathbf{v}^k) = (\mathbf{x}^*,\mathbf{v}^*)$, $(\mathbf{x}^{k+1},\mathbf{v}^{k+1})$ has to be $(\mathbf{x}^*,\mathbf{v}^*)$ due to \eqref{fixed point}, $\mathbf{H}\mathbf{x}^*=\bH^{\frac{1}{2}}\mathbf{x}^*=0$, and the uniqueness of $\mathbf{x}^{k+1}$.

Next, we discuss our algorithm design based on \eqref{argmin approximation primal}--\eqref{argmin approximation dual}. Using \eqref{first-order opt}, we rewrite \eqref{argmin approximation primal} as
\begin{equation}
	\mathbf{x}^{k+1} = \mathbf{x}^k - (\mathbf{B}^k + \rho \mathbf{H})^{-1}(\nabla \tilde{f}(\mathbf{x}^k) + \theta\bH^{\frac{1}{2}} \mathbf{v}^k + \rho \mathbf{H} \mathbf{x}^k). \label{x^k+1 vk}
\end{equation}
Then, we apply the following change of variable
\begin{equation}\label{qk=tildeH1/2vk}
\mathbf{q}^k = \bH^{\frac{1}{2}} \mathbf{v}^k. 
\end{equation}
This requires $\mathbf{q}^k \in \mathcal{S}^{\perp}\, \forall k \geq 0$, where $\mathcal{S}^{\perp} := \{\mathbf{x}\in \Re ^{Nd}| \, x_1 + \cdots + x_N = \mathbf{0}\}$ is the orthogonal complement of $\mathcal{S}$, which can be simply guaranteed by
\begin{equation}
\mathbf{q}^0 \in \mathcal{S}^{\perp}.\label{q0}
\end{equation}
Furthermore, for convenience, we define $\mathbf{G}^k:= (\mathbf{B}^k + \rho \mathbf{H})^{-1}$. With \eqref{qk=tildeH1/2vk}, we have the following equivalent form of \eqref{argmin approximation primal}--\eqref{argmin approximation dual}:
\begin{align} 
\label{xk+1 original} &\mathbf{x}^{k+1} = \mathbf{x}^k - \mathbf{G}^k (\nabla \tilde{f}(\mathbf{x}^k) + \theta\mathbf{q}^k + \rho \mathbf{H} \mathbf{x}^k),\\ 
\label{qk+1 original} &\mathbf{q}^{k+1}=\mathbf{q}^{k}+\rho \bH \mathbf{x}^{k+1}.
\end{align}

Notably, we will directly design the structure of $\bG^k$ later as there must exist some $\bB^k=(\bG^{k})^{-1}-\rho\bL$ for any feasible $\bG^k$. To further enhance the generality, we replace the weight matrix $\mathbf{H}$ in \eqref{xk+1 original} and \eqref{qk+1 original} with two possibly time-varying matrices $\mathbf{D}^k$ and $\tilde{\bD}^k$, respectively. Then, we reformulate \eqref{xk+1 original} and \eqref{qk+1 original} with an auxiliary variable $\mathbf{z}^{k}$ as follows: 
\begin{align}
    \label{zk final} \mathbf{z}^{k} &= \nabla \tilde{f}(\mathbf{x}^k) + \theta\mathbf{q}^k + \rho \mathbf{D}^k \mathbf{x}^k,\\
	\label{xk+1 final} \mathbf{x}^{k+1} &= \mathbf{x}^k - \mathbf{G}^k \mathbf{z}^k,\\ 
	\label{qk+1 final} \mathbf{q}^{k+1} &= \mathbf{q}^{k}+\rho \tilde{\mathbf{D}}^k\bx^{k+1}.
\end{align}

It is noteworthy that $\mathbf{D}^k$ and $\tilde{\bD}^k$ inherit all the conditions of $\bH$ below \eqref{p2} and the matrix $\bH$ will subsequently serve as a design foundation for constructing $\mathbf{D}^k$ and $\tilde{\bD}^k$. To formalize the design principles for $\mathbf{D}^k$, $\mathbf{\tilde{D}}^k$ and $\bG^k$, we introduce the following conditions.

\begin{assumption}\label{assumption: Dk}
For any $k\geq 0$, the matrices $\bH$, $\mathbf{D}^k$, $\tilde{\mathbf{D}}^k$ and $\bG^k$ satisfy the following conditions:
\begin{enumerate}
    \item[($\romannumeral1$)] $\bH$, $\mathbf{D}^k$ and $\tilde{\mathbf{D}}^k$ are symmetric and positive semi-definite.
    \item[($\romannumeral2$)] $\operatorname{Null}(\bH)=\operatorname{Null}(\bD^k)=\operatorname{Null}(\tilde{\bD}^k) = \mathcal{S}$.
    \item[($\romannumeral3$)] $\bH$, $\mathbf{D}^k$ and $\tilde{\mathbf{D}}^k$ are commutative in matrix multiplication.
    \item[($\romannumeral4$)] $\bG^k$ is symmetric and positive definite.
\end{enumerate}
\end{assumption}

Assumption~\ref{assumption: Dk}($\romannumeral1$)($\romannumeral2$) are widely adopted in distributed optimization \cite{shi_extra_2015,AugDGM2015,nedic_achieving_2017,xu_bregman_2018,mokhtari_dqm_2016,wu_second-order_2021,alghunaim_decentralized_2021,Wu2020AUA,hong2016convergence,hong_distributed_2018,mancino2023decentralized,yi2022sublinear,yi2021linear,alghunaim_unified_2022}, and Assumption~\ref{assumption: Dk}($\romannumeral3$) is also employed in unifying frameworks \cite{alghunaim_unified_2022,xu_distributed_2021} for generalizing a variety of existing works. Such $\bD^k$ and $\tilde{\bD}^k$ can be chosen as the polynomials of the weight matrix $\bH$, as we will elaborate in the next subsection. Assumption~\ref{assumption: Dk} ($\romannumeral4$) is induced by the condition $\bB^k+\rho\bH\succ\mathbf{O}_{Nd}$, which ensures the well-posedness of the minimization step in \eqref{argmin approximation primal}. The flexible parameterization of $\bD^k,\tilde{\bD}^k,\bG^k$ under Assumption~\ref{assumption: Dk} allows the proposed frameworks to generalize multiple existing algorithms, as will be discussed in Section~\ref{section: specializations}. Hence, we denote the framework described by \eqref{zk final}--\eqref{qk+1 final} as a \emph{\underline{U}nifying \underline{P}rimal-dual \underline{P}roximal} (UPP) framework.

\subsection{Distributed implementation and mixing acceleration}

This subsection presents the distributed implementation of UPP based on the primal-dual framework \eqref{zk final}--\eqref{qk+1 final} and introduces a mixing acceleration strategy over the network.

First, to enable the distributed deployment, we partition the variables in \eqref{zk final}--\eqref{qk+1 final} as $\mathbf{z}^k=\big((z_1^k)^{\mathsf{T}},\ldots,(z_N^k)^{\mathsf{T}}\big)^{\mathsf{T}}$, $\mathbf{x}^k=\big((x_1^k)^{\mathsf{T}},\ldots,(x_N^k)^{\mathsf{T}}\big)^{\mathsf{T}}$ and $\mathbf{q}^k=\big((q_1^k)^{\mathsf{T}},\ldots,(q_N^k)^{\mathsf{T}}\big)^{\mathsf{T}}$. Suppose each node $i\in \mathcal{V}$ maintains $z_i^k,x_i^k,q_i^k\in\mathbb{R}^d$. Thus, the initialization in \eqref{q0} can be simply satisfied by letting $q_i^0=\mathbf{0}_d\,\forall i\in\mathcal{V}$. To design $\bD^k$ and $\tilde{\bD}^k$ based on $\bH$, we choose $ \mathbf{H}= \mathbf{P} \otimes \mathbf{I}_d$ and impose the following Assumption on  $\mathbf{P}\in\mathbb{R}^{N\times N}$.
\begin{assumption}\label{assumption: distributed}
    The matrix $\mathbf{P}=(p_{ij})_{N\times N}$ satisfies the following conditions:
    \begin{itemize}
        \item[($\romannumeral1$)] $\mathbf{P}$ is symmetric and positive semi-definite.
        \item[($\romannumeral2$)] $\operatorname{Null}(\mathbf{P})=\operatorname{span}(\mathbf{1}_N)$.
        \item[($\romannumeral3$)] $p_{ij}=p_{ji}>0, \forall \{i,j\}\in \mathcal{E}$.
        \item[($\romannumeral4$)] $p_{ij}=p_{ji}=0, \forall i\in \mathcal{V},\,\forall j\notin \mathcal{N}_i\cup \{i\}$.
    \end{itemize}
\end{assumption}

Under Assumption~\ref{assumption: distributed}, the nodes can jointly determine such neighbor-sparse $\mathbf{P}$ without any centralized coordination \cite{Wu2020AUA}. 

Now we introduce $\bD^k$ and $\tilde{\bD}^k$ as the polynomials of $\bH$, which take the forms of
\vspace{-0.1cm}
\begin{equation*}\label{def Dk}
    \bD^k=P_{\tau_a^k}(\bH)=\sum_{t=1}^{\tau_a^k}a_t^k\mathbf{H}^t, \quad \tilde{\bD}^k=P_{\tau_{b}^k}(\bH)=\sum_{t=1}^{\tau_{b}^k}b_t^k\mathbf{H}^t.
\end{equation*}
where $\tau_a^k,\tau_{b}^k\geq 1$ are the polynomial degrees and $\mathbf{a}^k=(a_1^k,\dots,a_{\tau_a^k}^k)^{\mathsf{T}}\in\Re^{\tau_a^k}$, $\mathbf{b}^k=(b_1^k,\dots,b_{\tau_b^k}^k)^{\mathsf{T}}\in\Re^{\tau_b^k}$ are the polynomial coefficients. The definitions above meet Assumption~\ref{assumption: Dk}($\romannumeral1$)($\romannumeral2$)($\romannumeral3$).

Oracle~\ref{oracle acc} describes the distributed way of computing $P_{\tau_a^k}(\mathbf{H})\mathbf{x}^k$ in \eqref{zk final} via $\tau_a^k$ local communication rounds, which potentially accelerates the information mixing process. The term $\tilde{\bD}^k\bx^{k+1}$ in \eqref{qk+1 final} can be computed in the same way. We will specify the selections of $\tau_a^k,\tau_b^k$, $\mathbf{a}^k,\mathbf{b}^k$ in Section~\ref{subsection: Chebyshev} aiming at achieving more efficient communications. 

\floatname{algorithm}{Oracle}
\begin{algorithm}[h] 
        \renewcommand{\thealgorithm}{$\mathcal{A}$}
        \caption{: Mixing Acceleration} \label{oracle acc}
        \begin{algorithmic}[1]
                \STATE \textbf{Input:} $\mathbf{x}^k=((x_1^k)^{\mathsf{T}},\ldots,(x_N^k)^{\mathsf{T}})^{\mathsf{T}}\in \Re^{Nd}$, $\mathbf{P}\succeq\mathbf{O}_N$, $\tau_a^k\geq 1$, $\mathbf{a}^k=(a_1^k,\dots,a_{\tau_a^k}^k)^{\mathsf{T}}\in\Re^{\tau_a^k}$.\\ 
                \STATE \textbf{Procedure} $\operatorname{MACC}(\mathbf{x}^k,\mathbf{P},\tau_a^k,\mathbf{a}^k)$ \\
				\STATE Each node $i \in \mathcal{V}$ maintains a variable $x_i^{t}$ and sets $x_i^0=x_i^k$.
                \FOR{$t=0:\tau_a^k-1$} 
                \STATE Each node $i \in \mathcal{V}$ sends $x_i^{t}$ to every neighbor $j\in\mathcal{N}_i$ and then computes $x_i^{t+1}= \sum_{j\in {\mathcal{N}}_i \cup \{i\}} p_{ij} x_j^{t}$.\\
                \ENDFOR
                \STATE \textbf{Output:} Each node $i \in \mathcal{V}$ returns $\sum_{t=1}^{\tau_a^k}a_t^kx_i^t$, so that $P_{\tau_a^k}^\bD(\mathbf{H})\mathbf{x}^k=\big((\sum_{t=1}^{\tau_a^k}a_t^kx_1^t)^{\mathsf{T}},\ldots,(\sum_{t=1}^{\tau_a^k}a_t^kx_N^t)^{\mathsf{T}}\big)^{\mathsf{T}}$.\\
                \STATE \textbf{End procedure}\\
        \end{algorithmic}
\end{algorithm}

The matrix $\bG^k$ can also be constructed as a polynomial of $\bH$ (in Section~\ref{subsection: UPP-MC}) or in a block-diagonalizable structure (in Section~\ref{subsection: UPP-SC}), which results in two distinct realizations.

\subsection{UPP with multi-inner-loop communication} \label{subsection: UPP-MC}

To further accelerate the information fusion throughout the network, we construct a specialized realization of UPP, called UPP with \textbf{M}ulti-inner-loop \textbf{C}ommunication (UPP-MC), which results from specifying the matrix $\bG^k$ as 
\begin{equation}
\mathbf{G}^k = \zeta^k \mathbf{I}_{Nd} - \eta^k P_{\tau_d^k}(\mathbf{H}), \quad\forall k\ge0,\label{Gk}
\end{equation}
where $\zeta^k>0,\eta^k\geq 0$ and $P_{\tau_d^k}(\mathbf{H})=\sum_{t=1}^{\tau_d^k}d_t^k\mathbf{H}^t$ with $d_1^k,\dots,d_{\tau_d^k}^k\in\mathbb{R}$, which is a polynomial of $\bH$ with degree $\tau_d^k\geq 1$ and satisfies the following assumption.
\begin{assumption}\label{assumption polynomial}
For each $k\geq 0$, the parameters $\zeta^k,\eta^k$ and the polynomial $P_{\tau_d^k}(\mathbf{H})$ satisfy the following conditions:
\begin{itemize}
    \item[($\romannumeral1$)] $P_{\tau_d^k}(\mathbf{H})$ is positive semi-definite.
    \item[($\romannumeral2$)] $\zeta^k>0$ and $0\leq\eta^k<\zeta^k/\lambda_1^{P_{\tau_d^k}(\mathbf{H})}$.
\end{itemize}
\end{assumption}

Assumption~\ref{assumption polynomial} guarantees that $\mathbf{G}^k$ is positive definite ($\mathbf{G}^k\succ \mathbf{O}_{Nd}$), and thus satisfies Assumption~\ref{assumption: Dk} ($\romannumeral4$). In addition, the matrices $\mathbf{H}$, $\bD^k$, $\tilde{\bD}^k$ and $\bG^k$ are mutually commutative in matrix multiplication. With \eqref{Gk}, the term $P_{\tau_d^k}(\mathbf{H})\mathbf{z}^k$ in \eqref{xk+1 final} serves to propagate the gradient of the AL function, which can be evaluated in a fully distributed manner using Oracle~\ref{oracle acc}. We present the detailed distributed implementation of UPP-MC in Algorithm~\ref{algorithm: UPP-MC}.

\floatname{algorithm}{Algorithm}
\begin{algorithm}[h]
    \renewcommand{\thealgorithm}{1}
	\caption{UPP-MC}    
	\label{algorithm: UPP-MC} 
	\begin{algorithmic}[1]
	  \STATE \textbf{Parameters:} $\rho,\theta>0$, $\mathbf{P}\succeq\mathbf{O}_N$, $\zeta^k>0$, $\eta^k>0$, $\tau_a^k,\tau_b^k,\tau_d^k \geq 1$, $\mathbf{a}^k\in \mathbb{R}^{\tau_a^k}, \mathbf{b}^k\in \mathbb{R}^{\tau_b^k}, \mathbf{d}^k\in \mathbb{R}^{\tau_d^k}$.
		\STATE \textbf{Initialization:} Each node $i \in \mathcal{V}$ sets $q_i^0=0$ and arbitrary $x_i^0 \in \Re ^d$, and sends $x_i^0$ to every neighbor $j \in \mathcal{N}_i$.
		\FOR{$k \geq 0$} 
			\STATE The nodes jointly compute $\mathbf{z}^{k}=\nabla \tilde{f}(\mathbf{x}^{k}) + \theta \mathbf{q}^{k}+\rho \operatorname{MACC}(\mathbf{x}^{k},\mathbf{P},\tau_a^{k},\mathbf{a}^{k})$.
			\STATE Each node $i \in \mathcal{V}$ sends $z_i^{k}$ to every neighbor $j \in \mathcal{N}_i$.
			\STATE The nodes jointly compute $\mathbf{x}^{k+1} = \mathbf{x}^k - \zeta^k\mathbf{z}^k + \eta^k \operatorname{MACC}(\mathbf{z}^k,\mathbf{P},\tau_d^k,\mathbf{d}^k)$. 
			\STATE Each node $i \in \mathcal{V}$ sends $x_i^{k+1}$ to every neighbor $j \in \mathcal{N}_i$. \\
			\STATE The nodes jointly compute $\mathbf{q}^{k+1}=\mathbf{q}^k+\rho\operatorname{MACC}({\mathbf{x}}^{k+1},\mathbf{P},\tau_b^k,\mathbf{b}^k)$.
		\ENDFOR
	\end{algorithmic}
\end{algorithm}

The operator $\operatorname{MACC}$ in Algorithm~\ref{algorithm: UPP-MC} is achieved via Oracle~\ref{oracle acc} and is fully distributed. Observe that UPP-MC requires three communication inner loops at each iteration, which necessitates $(\tau_{a}^k+\tau_b^k+\tau_d^k)$ rounds of communication. 

\subsection{Communication acceleration scheme}\label{subsection: Chebyshev}

This subsection discusses the design of appropriate polynomials to accelerate information mixing.

As is shown in Table~\ref{tab:convergence rate}, the convergence rates of the mentioned algorithms are fundamentally governed by the condition number $\gamma:=\lambda_1^\bH/\lambda_{N-1}^\bH$ of the graph Laplacian matrix $\bH$. Since UPP adopts the polynomials $P_{\tau_l^k}(\bH),\,l=a,b,d$ instead of $\mathbf{H}$, it is reasonable to deduce that the convergence rate of UPP is influenced by the condition number of the polynomials, denoted as $\kappa_{P,l}^k:=\lambda_1^{P_{\tau_l^k}(\mathbf{H})}/\lambda_{N-1}^{P_{\tau_l^k}(\mathbf{H})}$. From a graph-theoretic perspective, $P_{\tau_l^k}(\mathbf{H})$ corresponds to a weighted interaction graph, where smaller $\kappa_{P,l}^k$ indicate denser graph topologies. This motivates us to incorporate Chebyshev acceleration \cite{auzinger2011iterative}, a well-established technique for spectral shaping, into our distributed framework. As a specialized instance of Oracle~\ref{oracle acc}, this method constructs a favorable polynomial $P_{\tau^k}(\mathbf{H})$ for fixed $\tau^k=\tau\geq 1$ that minimize $\kappa_{P,l}^k$. Subsequently, Oracle~\ref{oracle cheb} exhibits how Chebyshev acceleration can be applied in a distributed fashion over the graph $\mathcal{G}$.

\floatname{algorithm}{Oracle}
\begin{algorithm}[h] 
        \renewcommand{\thealgorithm}{$\mathcal{A}'$}
        \caption{: Chebyshev Acceleration} \label{oracle cheb}
        \begin{algorithmic}[1]
                \STATE \textbf{Input:} $\mathbf{y}=(y_1^{\mathsf{T}},\ldots,y_N^{\mathsf{T}})^{\mathsf{T}}\in \Re^{Nd}$, $\mathbf{H}=\mathbf{P}\otimes \mathbf{I}_d\succeq\mathbf{O}_{Nd}$, $\tau\geq 1$, $c_1 = \frac{1+\gamma}{1-\gamma}$, $c_2=\frac{1}{(1+\gamma)\lambda_1^\mathbf{P}}$.\\
                \STATE \textbf{Procedure} $\operatorname{CACC}(\mathbf{y},\mathbf{P},\tau)$ \\
                \STATE $b^0 = 1$, $b^1 = c_1$.\\ 
	        	\STATE $\mathbf{y}_0=\mathbf{y}$, $\mathbf{y}_1=c_1(\mathbf{I}-c_2\mathbf{H})\mathbf{y}_0$.\\
                \FOR{$t=1:\tau -1$} 
                \STATE $b^{t+1}= 2c_1 b^t-b^{t-1}$.\\
                \STATE $\mathbf{y}^{t+1}=2c_1(\mathbf{I}-c_2\mathbf{H})\mathbf{y}^k-\mathbf{y}^{k-1}.$
                \ENDFOR
                \STATE \textbf{Output:} $P_{\tau}(\mathbf{H})\mathbf{y}=\mathbf{y}_0-\mathbf{y}^\tau/b^\tau$.\\
                \STATE \textbf{End procedure}\\
        \end{algorithmic}
\end{algorithm}

Chebyshev acceleration theoretically optimizes the spectral properties of the polynomial $P_{\tau^k}(\mathbf{H})$ by minimizing its condition number, thereby enhancing convergence rates—a rigorous analysis of which will be presented in Section~\ref{section: convergence analysis}. Note that the number of inner-loop iterations is chosen to be proportional to $\sqrt{\gamma}$, a result rigorously proven in \cite{sun_distributed_2019} and further elaborated on in Section~\ref{section: convergence analysis}. Significantly, while the accelerated algorithm in \cite{sun_distributed_2019} also adopts Chebyshev acceleration at fixed polynomial constructions, our proposed mixing acceleration introduces a fundamentally more versatile approach that allows \textit{time-varying} polynomials to facilitate convergence.

Subsequently, we incorporate Oracle~\ref{oracle cheb} into UPP-MC in Algorithm~\ref{algorithm: UPP-MC}. Here, we set $\mathbf{D}^k=\mathbf{\tilde{D}}^k=\mathbf{H}$ (so that $a^k=b^k=1$), fix $\tau_d^k=\tau=\lceil\sqrt{\gamma}\rceil$ and compute $P_{\tau}(\mathbf{H})$ in \eqref{Gk} by Oracle~\ref{oracle cheb}, resulting in its Chebyshev-accelerated specialization of UPP-MC, referred to as UPP-MC-CA. Note that for UPP-MC-CA, Lines 4 and 8 of Algorithm~\ref{algorithm: UPP-MC} perform identical decision transmissions, where the MACC operator is replaced with CACC in Oracle~\ref{oracle cheb}, so that these two steps only require a single communication operation. Consequently, each iteration of UPP-MC-CA involves $\tau+1$ communication rounds. 

\subsection{UPP with single-inner-loop communication} \label{subsection: UPP-SC}
Driven by the goal of exploring generality of UPP and reducing communication overhead, we propose another realization of UPP, called UPP with \textbf{S}ingle-inner-loop \textbf{C}ommunication (UPP-SC). 

Based on \eqref{zk final}--\eqref{qk+1 final}, we introduce the following variable selections: Let $\theta = 1$, $\mathbf{D}^k=\mathbf{\tilde{D}}^k = \mathbf{L}=P_{\tau_e}(\bH)=\sum_{t=1}^{\tau_e} e_t\bH^t$, $\tau_e\geq 1$, $e_1,\dots,e_{\tau_e}\in \mathbb{R}$, and $\mathbf{G}^k\succ \mathbf{O}_{Nd}$. Then, UPP reduces to
\begin{align}
	\label{xk+1 opt} \mathbf{x}^{k+1} & = \mathbf{x}^k - \mathbf{G}^k (\nabla \tilde{f}(\mathbf{x}^k) + \rho \mathbf{L} \mathbf{x}^k + \mathbf{q}^k),\\
	\label{qk+1 opt} \mathbf{q}^{k+1} &= \mathbf{q}^{k}+\rho \mathbf{L} \mathbf{x}^{k+1}.
\end{align}

Since $\mathbf{L}$ is a polynomial of the weight matrix $\mathbf{H}$, $\mathbf{L}\bx^k$ can be computed through $\tau_e$ inner loops for communication per iteration, as is shown in Oracle~\ref{oracle acc}. Note that, by storing $\mathbf{L}\mathbf{x}^{k+1}$ during the dual update \eqref{qk+1 opt} at $k$-th iteration and reusing it  at $(k+1)$-th iteration, the term $\mathbf{L} \mathbf{x}^{k+1}$ in \eqref{xk+1 opt} consumes no additional communication. Then, iterations \eqref{xk+1 opt}--\eqref{qk+1 opt} with the above operations lead to UPP-SC.

In UPP-SC, the matrix $\mathbf{G}^k$ is not necessarily constructed as a polynomial of the weight matrix $\bH$. Instead of \eqref{Gk} under Assumption~\ref{assumption polynomial}, we stipulate that $\bG^k$ can be block-diagonalized in the following form:
\begin{equation}\label{Gk block diagonalization}
    \bG^k=\operatorname{diag}(\bG_1^k,\dots,\bG_N^k), \quad \forall k\geq 0,
\end{equation}
where each $\bG_i^k\in\mathbb{S}^{N}$ is positive definite. We elaborate the distributed implementation of UPP-SC in Algorithm~\ref{algorithm: UPP-SC}.

\floatname{algorithm}{Algorithm}
\begin{algorithm}[h]
	\renewcommand{\thealgorithm}{3}
	\caption{UPP-SC}
	\label{algorithm: UPP-SC}               
	\begin{algorithmic}[1]
		\STATE \textbf{Parameters:} $\rho>0$, $\mathbf{P}\succeq\mathbf{O}_N$, $\tau_e^k\geq 1$, $\mathbf{e}^k=(e_1^k,\dots,e_{\tau_a^k}^k)^{\mathsf{T}}\in\Re^{\tau_e^k}$ and $\bG^k\in\mathbb{S}^{Nd}$.
		\STATE \textbf{Initialization:} Each node $i \in\mathcal{V}$ sets $x_i^{-1}=0$, $q_i^{-1}=0$, $x_i^{0}=-\bG_i^0 \nabla \tilde{f}(0)$. The nodes jointly compute ${\mathbf{y}}^0=\operatorname{MACC}(\mathbf{x}^0,\mathbf{P},\tau_e^k)$ and $\mathbf{q}^{0}=\rho {\mathbf{y}}^0$.
		\FOR{$k \geq 0$} 
			\STATE Each node $i \in \mathcal{V}$ computes $x_i^{k+1} = x_i^k -\bG_i^k (\nabla f_i(x_i^{k}) + q_i^{k} + \rho y_i^{k})$ and sends it to every neighbor $j \in \mathcal{N}_i$.
			\STATE The nodes jointly compute ${\mathbf{y}}^{k+1}=\operatorname{MACC}(\mathbf{x}^{k+1},\mathbf{P},\tau^k)$.
			\STATE Each node $i \in \mathcal{V}$ computes $q_i^{k+1} = q_i^{k} + \rho y_i^{k+1}.$\\
		\ENDFOR
	\end{algorithmic}
\end{algorithm}

Below, we provide the design examples of $\mathbf{G}^k$.

\subsubsection*{Example 1} In UPP-SC, we substitute $\mathbf{G}^k = \mu\mathbf{I}_{Nd}$, $\tau_e=\tau=\lceil \sqrt{\gamma}\rceil$, $\mathbf{L}=P_{\tau}(\mathbf{H})/\lambda_1^{P_{\tau}(\mathbf{H})}$, where $P_{\tau}(\mathbf{H})$ is generated by Chebyshev acceleration. Practically, we replace $\bG_i^0$ (Line 2 in Alg. \ref{algorithm: UPP-SC}) and $\bG_i^k$ (Line 4 in Alg. \ref{algorithm: UPP-SC}) with the $\mu\bI_{Nd}$, and substitute the operator $\operatorname{MACC}$ in Lines 2 and 5 of Alg. \ref{algorithm: UPP-SC} with $\operatorname{CACC}$, given in Oracle~\ref{oracle cheb}. In addition, With proper parameter selections, this specialization of UPP-SC, referred to as UPP-SC-OPT, is able to attain theoretically optimal communication complexity bounds (will be analyzed in Section~\ref{convergence of UPP-SC-OPT}).

\subsubsection*{Example 2}

When $\tilde{f}$ is twice-differentiable, we may let $\mathbf{G}^k$ involve its second-order information so that UPP-SC can be specialized to \textit{second-order} methods. Let the matrix $\mathbf{G}^k = (2\rho \operatorname{diag}(|\mathcal{N}_1|,\dots,|\mathcal{N}_N|)\otimes \bI_d+\nabla^2 \tilde{f}(\mathbf{x}^k))^{-1}$, where $|\mathcal{N}_i|$ denotes the number of node $i$'s neighbors. Observe that $\mathbf{G}^k$ is block-diagonalizable with each diagonal block $\bG_i^k=(2\rho|\mathcal{N}_i|\otimes \bI_d+\nabla^2 f_i(\mathbf{x}^k))^{-1}$. We also fix $\tau=\lceil \sqrt{\gamma}\rceil$ and let $\mathbf{L}=P_{\tau}(\mathbf{H})/\lambda_1^{P_{\tau}(\mathbf{H})}$, where $P_{\tau}(\mathbf{H})$ is generated by Oracle~\ref{oracle cheb}. Correspondingly, we substitute the operator $\operatorname{MACC}$ in Lines 2 and 5 of Alg. \ref{algorithm: UPP-SC} with $\operatorname{CACC}$. At each iteration, nodes update their local variables using second-order information, motivating the name UPP-SC with \textbf{S}econd-\textbf{O}rder information (UPP-SC-SO). In UPP-SC-SO, instead of feeding $\bG^k$ as an input, each node $i$ evaluates its local $\bG_i^k$ by exploiting the curvature of its $f_i$.

\begin{remark}
UPP-MC and UPP-SC are distinct in parameter configurations, as shown in Table~\ref{tab:realizations}.
\begin{table}[h]
    \centering
    \begin{tabular}{c|c|c|c}
        \toprule
         Framework & $\bG^k$ & $\bD^k$ & $\tilde{\bD}^k$ \\
         \midrule
         UPP-MC & $\zeta^k \mathbf{I}_{Nd} - \eta^k P_{\tau_d^k}(\mathbf{H})$ & $P_{\tau_a^k}(\mathbf{H})$ & $P_{\tau_b^k}(\mathbf{H})$\\
         \midrule
         UPP-SC & block-diagonalizable & $P_{\tau_e}(\bH)$ & $P_{\tau_e}(\bH)$\\
         \bottomrule
    \end{tabular}
    \caption{Difference between UPP-MC and UPP-SC}
    \label{tab:realizations}
\end{table}

Both UPP-MC and UPP-SC require $\bG^k\succ \mathbf{O}_{Nd}$ and their fundamental distinction is that UPP-MC employs $\bG^k$ as a polynomial of the graph Laplacian matrix and UPP-SC utilizes a block-diagonalizable structure that may enable explicit incorporation of second-order information.
\end{remark}

\section{Existing ALgorithms as Specializations}\label{section: specializations}
This section presents how UPP-MC and UPP-SC generalize a wide range of existing distributed optimization algorithms.
\subsection{Specializations of UPP-MC}
UPP-MC reduces to several distributed first-order algorithms for convex and nonconvex optimization.


\begin{itemize}
	\item[1)] \label{analysis extra} \textit{EXTRA}: EXTRA \cite{shi_extra_2015} is a renowned first-order algorithm developed from a decentralized gradient descent method. As is established in \cite{Wu2020AUA}, EXTRA can be equivalently written as
	\begin{align}
		  \label{EXTRA primal 1}\mathbf{x}^{k+1} &= (\tilde{\bW} \otimes \bI_d) \mathbf{x}^k - \alpha \nabla \tilde{f}(\mathbf{x}^k) - \alpha \mathbf{q}^k, \\
		  \label{EXTRA primal 2}\mathbf{q}^{k+1} &= \mathbf{q}^k + \frac{1}{\alpha} ((\bW - \tilde{\bW})\otimes \bI_d) \mathbf{x}^{k+1}.
	\end{align}

	This can be cast into UPP-MC with $\rho = \frac{1}{\alpha},\theta=1$, $\bD^k=(\bI_N-\tilde{\bW})\otimes \bI_d$, $\tilde{\bD}^k=(\tilde{\bW}-\bW)\otimes \bI_d$, and $\bG^k=\alpha\bI_{Nd}$. Given the assumptions in \cite{shi_extra_2015} that $\tilde{\bW} \succeq \bW$ and $\operatorname{Null}(\tilde{\bW} - \bW) = \operatorname{span}(\mathbf{1}_d)$, it is ensured that $\bD^k$, $\tilde{\bD}^k$ and $\bG^k$ share the same eigenvectors. Consequently, it is straightforward to verify that all the conditions in Assumptions~\ref{assumption: Dk}--\ref{assumption polynomial} are met.

	\item[2)]\textit{DIGing:} DIGing \cite{nedic_achieving_2017} is a gradient-tracking method for distributed convex optimization in the context of time-varying networks. Here, we focus specifically on DIGing whose weight matrices associated with the time-varying networks commute with each other. Let $\alpha>0$ and $\bW^k \in \mathbb{R}^{N\times N}\forall k\geq 0$ satisfy $\bW^k\mathbf{1}=(\bW^k)^{\mathsf{T}}\mathbf{1} = \mathbf{1}, [\bW]_{ij}=0 \,\forall i\in \mathcal{V}\,\forall j \notin \mathcal{N}_i \cup \{i\}$, and $\|\bW^k-\frac{1}{N}\mathbf{1}\mathbf{1}^{\mathsf{T}}\|<1$. Under these conditions, DIGing can be written as:
	\begin{align*}
		\mathbf{x}^{k+2}=&((\bW^{k+1}+\bW^k)\otimes I_d)\mathbf{x}^{k+1}-((\bW^k)^2\otimes I_d)\mathbf{x}^k-\alpha (\nabla \tilde{f} (\mathbf{x}^{k+1})-\nabla \tilde{f}(\mathbf{x}^k)), \quad \forall k\geq 0,
	\end{align*} 
	where $\mathbf{x}^0$ can be arbitrary and $\mathbf{x}^1 = (\bW^0\otimes I_d)\mathbf{x}^0 - \alpha \nabla \tilde{f}(\mathbf{x}^0)$. Adding the above equation from $k=0$ to $k=K-1$ yields
	\begin{align*}
		\mathbf{x}^{K+1}=&\big((\bW^K)^2\otimes \bI_d\big)\mathbf{x}^K+\bx^1 - (
        \bW^0\otimes \bI_d)\mathbf{x}^0 \\&-\sum_{k=0}^{K-1}\big((\bI_N-(\bW^{k+1}+\bW^{k})+(\bW^{k+1})^2)\otimes I_d\big)\mathbf{x}^{k+1}\notag\\
        &- \alpha \big(\nabla \tilde{f}(\mathbf{x}^K)-\nabla \tilde{f}(\mathbf{x}^0)\big), \quad \forall K>0.
	\end{align*}
	By letting $\mathbf{q}^0=\frac{1}{\alpha}\big((\bW^0)^2-\bW^0\big)\otimes \bI_d)\mathbf{x}^0$, the above update is the same as
	\begin{align*}
		\mathbf{x}^{K+1}  =& ((\bW^K)^2\otimes \bI_d)\mathbf{x}^K - \alpha \nabla \tilde{f}(\mathbf{x}^K) - \alpha \mathbf{q}^K,\\
		\mathbf{q}^{K+1} =&  \mathbf{q}^{K} + \frac{1}{\alpha}\Big(\big(\bI_N-(\bW^K+\bW^{K-1})+(\bW^K)^2\big)\otimes \bI_d\Big)\mathbf{x}^{K+1}.
	\end{align*}

	The algorithmic form of DIGing is equivalent to UPP-MC in \eqref{zk final}--\eqref{qk+1 final} with $\mathbf{q}^0 \in \mathcal{S}^{\bot}$, $\rho =1/\alpha,\theta=1$, $\bD^K=(\bI-(\bW^K)^2)\otimes \bI_d,$ $\tilde{\bD}^K=\big(\bI_N-(\bW^K+\bW^{K-1})+(\bW^K)^2\big)\otimes \bI_d$, and $\bG^K= \alpha \bI_{Nd}$. It can be verified that all the conditions in Assumptions~\ref{assumption: Dk}--\ref{assumption polynomial} are fulfilled.

	
	\item[3)] \textit{L-ADMM}: L-ADMM \cite{yi2022sublinear} is a distributed first-order primal-dual algorithm for solving problem \eqref{p2} with smooth, nonconvex $f_i$'s along with the P-{\L} condition. Our proposed UPP-MC \eqref{zk final}--\eqref{qk+1 final} reduces to L-ADMM when $\rho = \alpha,\theta=\beta,\mathbf{D}^k=\mathbf{L},\tilde{\mathbf{D}}^k=\frac{\beta}{\alpha\gamma}\mathbf{L}$, and $\mathbf{G}^k=\frac{1}{\gamma}\mathbf{I}_{Nd}$, so that \eqref{zk final}--\eqref{qk+1 final} become
	\begin{align*}
		 \mathbf{x}^{k+1} &= \mathbf{x}^k - \frac{1}{\gamma}(\nabla \tilde{f}(\mathbf{x}^k) + \alpha \mathbf{L} \mathbf{x}^k + \beta\mathbf{q}^k), \\
		 \mathbf{q}^{k+1} &= \mathbf{q}^{k} + \frac{\beta}{\gamma}\mathbf{L}\mathbf{x}^{k+1},
	\end{align*}
    with arbitrary $\mathbf{x}^0 \in \Re^{Nd}$ and $\mathbf{q}^0=\mathbf{0}$. As \cite{yi2022sublinear} assumes $\bL \succeq \mathbf{0}$ and $\operatorname{Null}(\mathbf{L}) = \{\mathbf{1}_{Nd}\}$, Assumptions~\ref{assumption: Dk}--\ref{assumption polynomial} hold.

	\item[4)] \textit{Prox-GPDA}: Prox-PDA \cite{hong_prox-pda_2017} is a proximal primal-dual algorithm specifically tailored for distributed nonconvex optimization. Here, we further consider its closed-form extension—Prox-GPDA—with the following form:
    \begin{align*}
        \bx^{k+1}=&\bx^k-\beta(\mathbf{A}^{\mathsf{T}}\mathbf{A}+\mathbf{B}^{\mathsf{T}}\mathbf{B})(\nabla \tilde{f}(\bx^k)+\bq^k+\beta\mathbf{A}^{\mathsf{T}}\mathbf{A}\bx^k),\\
        \bq^{k+1}=&\bq^k+\beta \mathbf{A}^{\mathsf{T}}\mathbf{A}\bx^{k+1},
    \end{align*}
    where $\mathbf{x}^0 \in \Re^{Nd}$ is arbitrary and $\mathbf{q}^0=\mathbf{0}$. This is exactly UPP with $\rho=\beta, \theta=1$, $\bD^k=\tilde{\bD}^k=\mathbf{A}^{\mathsf{T}}\mathbf{A}$ and $\bG^k=\beta(\mathbf{A}^{\mathsf{T}}\mathbf{A}+\mathbf{B}^{\mathsf{T}}\mathbf{B})$. Besides, since $\operatorname{Null}(\mathbf{A})=\{\mathbf{1}_{Nd}\}$ and $\mathbf{A}^{\mathsf{T}}\mathbf{A}+\mathbf{B}^{\mathsf{T}}\mathbf{B}\succ \mathbf{O}_{Nd}$, Assumptions~\ref{assumption: Dk}--\ref{assumption polynomial} hold.

    \item[5)] \textit{SUDA}: SUDA \cite{alghunaim_unified_2022} is a general stochastic unified decentralized algorithm for addressing distributed nonconvex optimization. Here, we consider its deterministic version, which can be regarded as a special case of our proposed UPP-MC by letting $\rho=\frac{1}{\alpha},\theta=1$, $\bD^k=\mathbf{A}^\dagger-\mathbf{C}$, $\tilde{\bD}^k=\mathbf{A}^\dagger\mathbf{B}^2$ and $\bG^k=\alpha \mathbf{A}$, where $\alpha>0$ and $\mathbf{A},\mathbf{B}^2,\mathbf{C}$ are chosen as polynomial functions of the weight matrix $\bW$ such that they are doubly stochastic and commute with each other. Again, Assumptions~\ref{assumption: Dk}--\ref{assumption polynomial} are met.

\end{itemize}

    \subsection{Specializations of UPP-SC}
    Recall that UPP-SC does not require the matrix $\bG^k$ to be a polynomial of the weight matrix (i.e., Assumption~\ref{assumption polynomial} does not necessarily hold), thereby enabling generalization of both first-order and second-order algorithms. 
    \begin{enumerate}
        \item[1)] \textit{ID-FBBS:} ID-FBBS \cite{xu_bregman_2018} adopts the form of \eqref{EXTRA primal 1}--\eqref{EXTRA primal 2}, with the modification that $\bW = 2 \tilde{\bW} - \bI_N$ and $\mathbf{q}^0$ can be any vector belonging to $\mathcal{S}^{\perp}$. Given the assumption in \cite{xu_bregman_2018} that $\tilde{\bW} \succ \mathbf{O}$, it can be shown that ID-FBBS is a specific instance of UPP-SC where $\rho = \frac{1}{\alpha}$, and $\bL=(\bI_N-\tilde{\bW})\otimes \bI_d$, $\bG^k=\alpha\bI_{Nd}$. Clearly, ID-FBBS satisfies Assumptions \ref{assumption: Dk}--\ref{assumption: distributed} and the block-diagonalization of $\bG^k$.
        \item[2)] \textit{DQM}: DQM \cite{mokhtari_dqm_2016} is a distributed second-order method for solving problem~\eqref{p1} where each local objective $f_i$ is strongly convex, smooth and twice continuously differentiable. The updates of DQM are given by: $x_i^{k+1}=x_i^k-\left(2c |\mathcal{N}_i| I_d + \nabla^2 f_i(x_i^k)\right)^{-1} ( c \sum_{j \in \mathcal{N}_i} (x_i^k - x_j^k)+ \nabla f_i(x_i^k) + q_i^k )$ and $q_i^{k+1}=q_i^k+c \sum_{j \in \mathcal{N}_i} (x_i^{k+1} - x_j^{k+1})$ with $c>0$, where $x_i^0\in\mathbb{R}^d\,\forall i\in\mathcal{V}$ are arbitrarily chosen, and $q_i^0\,\,\forall i \in \mathcal{V}$ are such that $\sum_{i\in\mathcal{V}}q_i^0=\mathbf{0}_d$. UPP-SC reduces to DQM by setting $\rho=c$, $\bG^k=\big((2c\operatorname{diag(|\mathcal{N}_1|,\dots,|\mathcal{N}_N)})\otimes \bI_d+\nabla \tilde{f}(\bx^k)\big)^{-1}$ and $\bL=\mathbf{H}=\mathbf{P}\otimes \bI_d$, where $p_{ij}=-1\,\,\forall \{i,j\}\in\mathcal{E}$ and $p_{ij}=|\mathcal{N}_i|\,\,\forall i\in\mathcal{V}$. It is clear that Assumptions~\ref{assumption: Dk}--\ref{assumption: distributed} are satisfied.
        \item[3)] \textit{SoPro}: SoPro \cite{wu_second-order_2021} is a distributed second-order proximal algorithm for addressing problem~\eqref{p1} where $\sum_{i\in\mathcal{V}}f_i$ is locally restricted strongly convex and twice continuously differentiable. The updates of SoPro is given by: $x_i^{k+1}=x_i^k-(\nabla f_i(x_i^k)+D_i)^{-1}(\nabla f_i(x_i^k)+\rho y_i^k+q_i^k)$ and $q_i^{k+1}=q_i^k+\rho \sum_{j\in\mathcal{N}_i}p_{ij}(x_i^{k+1}-x_j^{k+1})$. UPP-SC takes the same form of SoPro when $\bL=\bW$ and $\bG^k=(\nabla^2 \tilde{f}(\bx^k)+\bD)^{-1}$, where $\bD=\operatorname{diag}(\bD_1,\dots,\bD_N)$ is a symmetric block diagonal matrix. SoPro starts from arbitrary $\bx^0\in\mathbb{R}^{Nd}$ and $\bq^0\in\mathcal{S}^{\bot}$, which meets \eqref{q0}. Furthermore, $\mathbf{H}=\mathbf{P}\otimes \bI_d$ with $\mathbf{P}$ be a weighted Laplacian matrix satisfies Assumption~\ref{assumption: distributed}, which, together with $\nabla^2 \tilde{f}(\bx^k)+\bD\succ \mathbf{O}_{Nd}$, guarantees Assumption~\ref{assumption: Dk}.
    \end{enumerate}

\section{Convergence Analysis}\label{section: convergence analysis}
In this section, we provide the convergence analysis of the two realizations UPP-MC and UPP-SC, and the communication complexity bound for UPP-SC-OPT.
\subsection{Convergence analysis of UPP-MC}\label{general convergence}
In this section, we analyze the convergence rate of UPP-MC (described by \eqref{zk final}--\eqref{Gk}) under various nonconvex conditions.

First, we generate a sequence, consisting of the consensus error and the objective function error. For some $\bar{\epsilon}>0$, 
\begin{equation}
	\label{def V} V^k = \frac{1}{2} \|\mathbf{x}^k\|^2_\mathbf{K} + \frac{1}{2\bar{\varepsilon}}\|\mathbf{s}^k\|^2_{\bK}+ \langle \mathbf{x}^k, \mathbf{K}\mathbf{s}^k \rangle + f(\bar{x}^k) - f^*,
\end{equation}
where $\|\mathbf{x}^k\|^2_\mathbf{K}$ is the consensus error with $\mathbf{K} =(\mathbf{I}_N - \frac{1}{N} \mathbf{1}_N \mathbf{1}_N ^{\mathsf{T}}) \otimes \mathbf{I}_d$; $\|\mathbf{s}^k\|^2_{\bK}$ reveals the stationary error with $\mathbf{s}^k=\mathbf{q}^k+\frac{1}{\theta}\nabla \tilde{f}(\bar{\mathbf{x}}^k)$, and $\bar{\mathbf{x}}^k=\frac{1}{N}(\mathbf{1}_N\mathbf{1}^\mathsf{T}_N\otimes \mathbf{I}_d)\mathbf{x}^k$; and $f(\bar{x}^k) - f^*$ represents the objective function error with $\bar{x}^k = \frac{1}{N}(\mathbf{1}^\mathsf{T}_N\otimes \mathbf{I}_d)\mathbf{x}^k$.
The following proposition characterizes the dynamics of this sequence, verifying it is nonincreasing.

\begin{proposition}\label{proposition parameter range}
	Suppose Assumptions~\ref{assumption smooth}--\ref{assumption polynomial} hold. Let $\{\mathbf{x}^k\}$ be the sequence generated by UPP-MC with proper parameters\footnote{For better readability, the explict expressions of the parameters and the constants throughout Section~\ref{general convergence} are given in the corresponding proofs.}. Then, the sequence of $V^k$ is nonincreasing.
\end{proposition}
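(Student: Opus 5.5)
The plan is to bound each of the four components of $V^k$ in \eqref{def V} one step ahead and show that, for suitable parameters, $V^{k+1}-V^k\le -c\big(\|\mathbf{x}^k\|_\mathbf{K}^2+\|\mathbf{s}^k\|_\mathbf{K}^2+\|\nabla f(\bar x^k)\|^2\big)$ for some $c>0$ that depends on the parameters. This is stronger than monotonicity and is the bound the later rate results will need.

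\textbf{Structure.} By Assumption~\ref{assumption: Dk}(iii) and \eqref{Gk}, the matrices $\mathbf{H},\mathbf{D}^k,\tilde{\mathbf{D}}^k,\mathbf{G}^k$ and $\mathbf{K}$ are simultaneously diagonalizable. Moreover $\mathbf{D}^k\mathbf{K}=\mathbf{D}^k$, and similarly for $\tilde{\mathbf{D}}^k$ and $\mathbf{H}$. The spectra restricted to $\mathcal{S}^\perp$ therefore lie in intervals $[\underline{\lambda}_l,\bar{\lambda}_l]$ for $l=a,b,d$, and $\mathbf{G}^k$ has eigenvalues in $[\zeta^k-\eta^k\bar\lambda_d,\zeta^k]$. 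Averaging \eqref{xk+1 final} with $(\mathbf{1}^{\mathsf T}\otimes\mathbf{I}_d)/N$, and using that $\mathbf{q}^k\in\mathcal{S}^\perp$ and that $\mathbf{G}^k$ preserves $\mathcal S$ and $\mathcal S^\perp$, gives
\[
\bar{x}^{k+1}=\bar{x}^k-\tfrac{\zeta^k}{N}\textstyle\sum_i\nabla f_i(x_i^k),
\]
because $P_{\tau_d}(\mathbf{H})$ annihilates averages. The projected dynamics are
\[
\mathbf{K}\mathbf{x}^{k+1}=\mathbf{K}\mathbf{x}^k-\mathbf{G}^k\big(\mathbf{K}\nabla\tilde f(\mathbf{x}^k)+\theta\mathbf{q}^k+\rho\mathbf{D}^k\mathbf{x}^k\big).
\]
Substituting $\theta\mathbf{q}^k=\theta\mathbf{s}^k-\mathbf{K}\nabla\tilde f(\bar{\mathbf{x}}^k)$ gives
\[
\mathbf{K}\mathbf{x}^{k+1}=(\mathbf{I}-\rho\mathbf{G}^k\mathbf{D}^k)\mathbf{K}\mathbf{x}^k-\theta\mathbf{G}^k\mathbf{s}^k-\mathbf{G}^k\mathbf{K}\big(\nabla\tilde f(\mathbf{x}^k)-\nabla\tilde f(\bar{\mathbf{x}}^k)\big).
\]
Similarly,
\[
\mathbf{s}^{k+1}=\mathbf{s}^k+\rho\tilde{\mathbf{D}}^k\mathbf{K}\mathbf{x}^{k+1}+\tfrac1\theta\big(\nabla\tilde f(\bar{\mathbf{x}}^{k+1})-\nabla\tilde f(\bar{\mathbf{x}}^k)\big).
\]

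\textbf{Step 1: objective term.} By $\bar M$-smoothness (the descent lemma),
\[
f(\bar x^{k+1})\le f(\bar x^k)-\tfrac{\zeta^k}{2}\|\nabla f(\bar x^k)\|^2+\tfrac{\zeta^k\bar M^2}{2N}\|\mathbf{x}^k\|^2_\mathbf{K}-\big(\tfrac{1}{2\zeta^k}-\tfrac{\bar M}{2}\big)\|\bar x^{k+1}-\bar x^k\|^2 .
\]

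\textbf{Step 2: quadratic part.} Write the quadratic part of $V^k$ as the block form
\[
\tfrac12\|(\mathbf{x},\mathbf{s})\|^2_{\Pi},\qquad \Pi=\begin{pmatrix}\mathbf{K}&\mathbf{K}\\\mathbf{K}&\mathbf{K}/\bar\varepsilon\end{pmatrix}.
\]
This is positive definite on $\mathcal S^\perp$ when $\bar\varepsilon<1$, which also keeps $V^k\ge0$. Using the linear recursion above, the difference is the quadratic form of $(\mathbf{K}\mathbf{x}^k,\mathbf{s}^k)$ under $\mathcal{M}^{\mathsf T}\Pi\mathcal{M}-\Pi$, plus perturbation terms driven by gradient differences. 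I would bound the perturbations by Young's inequality, using $\|\nabla\tilde f(\mathbf{x}^k)-\nabla\tilde f(\bar{\mathbf{x}}^k)\|\le\bar M\|\mathbf{x}^k\|_\mathbf{K}$ and $\|\nabla\tilde f(\bar{\mathbf{x}}^{k+1})-\nabla\tilde f(\bar{\mathbf{x}}^k)\|\le\bar M\sqrt N\|\bar x^{k+1}-\bar x^k\|$.

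\textbf{Step 3: main obstacle.} The hard step is showing that the matrix $\mathcal{M}^{\mathsf T}\Pi\mathcal{M}-\Pi$ is negative definite with margin. Because everything is co-diagonalizable, this reduces to a family of $2\times2$ scalar matrices, one per eigenvalue triple $(\lambda_a,\lambda_b,g)$ with $g$ the corresponding eigenvalue of $\mathbf{G}^k$. The cross term $\langle\mathbf{x},\mathbf{K}\mathbf{s}\rangle$ is what produces the negative $-\theta g\|\mathbf{s}\|^2$ contribution; without it the dual error is not damped. My first attempt is to pick
\[
\bar\varepsilon\sim\frac{\theta\,\underline\lambda_b}{\bar\lambda_a},\qquad \rho\,\underline\lambda_a\gtrsim\bar M^2/\theta\ \text{(large enough)},\qquad \zeta^k\lesssim\frac{1}{\rho\bar\lambda_a+\bar M}\ \text{(small enough)},
\]
and then verify each scalar $2\times2$ matrix via its trace and determinant. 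Any residual $\bar M^2$-terms are absorbed by the $-\rho g\lambda_a\|\mathbf{x}\|^2$ and $-\theta g\|\mathbf{s}\|^2$ terms. The leftover $\|\bar x^{k+1}-\bar x^k\|^2$ terms are absorbed by the negative term from Step 1 once $\zeta^k\le 1/(4\bar M)$. Combining Steps 1–3 gives the claimed decrease, and hence $V^{k+1}\le V^k$.
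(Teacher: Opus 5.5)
\textbf{There is a genuine gap.} You use exactly the paper's $V^k$ and, like the paper, rely on simultaneous diagonalizability. However, you treat $\tilde{\mathbf D}^k$ as given and tune only $\bar\varepsilon,\rho,\zeta^k$. With this $V$ that cannot work.

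Fix one eigen-direction in $\mathcal S^\perp$ with eigenvalues $\lambda_a,\lambda_b,g$ of $\mathbf D^k,\tilde{\mathbf D}^k,\mathbf G^k$. Put $a=\rho g\lambda_a$, $b=\theta g$, $c=\rho\lambda_b$. The linear part is $x^+=(1-a)x-bs$, $s^+=s+cx^+$. The $x^2$-entry of $V(x^+,s^+)-V(x,s)$ is
\[
\Big(\tfrac12+c+\tfrac{c^2}{2\bar\varepsilon}\Big)(1-a)^2-\tfrac12\;\approx\; c+\tfrac{c^2}{2\bar\varepsilon}-a .
\]
The $c$-terms come from $c\|\mathbf x^{k+1}\|^2$ inside $\langle\mathbf x^{k+1},\mathbf K\mathbf s^{k+1}\rangle$ and from $\tfrac{c^2}{2\bar\varepsilon}\|\mathbf x^{k+1}\|^2$ inside $\tfrac{1}{2\bar\varepsilon}\|\mathbf s^{k+1}\|^2_{\mathbf K}$. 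The positive part does not shrink with the step: $c=\rho\lambda_b$, and you even want $\rho$ large. The only damping, $a=\rho g\lambda_a$, is $O(\zeta^k)$. So taking $\zeta^k$ small makes things worse, and no choice of $\bar\varepsilon$ rescues it.

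Concretely, take $f_i\equiv0$, $\mathbf q^0=0$, $\tilde{\mathbf D}^k=\mathbf D^k=\mathbf H$, and $\mathbf x^0$ an eigenvector in $\mathcal S^\perp$. In your regime this gives $V^1>V^0$.

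\textbf{The missing idea} is the coupling built into the paper's ``proper parameters'':
\[
\underline{\varepsilon}\mathbf D^k\mathbf G^k\preceq\tilde{\mathbf D}^k\preceq\bar\varepsilon\mathbf D^k\mathbf G^k .
\]
This is \eqref{tildeD range}, with $\bar\varepsilon<1$ the same constant that appears in $V$. It says the dual mixing must be of order (primal step)$\times$(primal mixing). Then $c=\varepsilon a$ with $\varepsilon\in[\underline{\varepsilon},\bar\varepsilon]$. To first order the $2\times2$ entries become $-(1-\varepsilon)a$ and $-b$ on the diagonal and $-\tfrac12\big(b+(1-\varepsilon/\bar\varepsilon)a\big)$ off the diagonal. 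Definiteness then needs roughly $\theta<4(1-\bar\varepsilon)\rho\lambda_a$ and $\underline{\varepsilon}$ very close to $\bar\varepsilon$, which \eqref{range rho} and \eqref{range underline varepsilon} enforce. Your prescription $\bar\varepsilon\sim\theta\underline\lambda_b/\bar\lambda_a$ does not encode this relation; here $\bar\varepsilon$ is a design constant tying $\tilde{\mathbf D}^k$ to $\mathbf D^k\mathbf G^k$, not something read off from given spectra.

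\textbf{A second, smaller gap} concerns the drift term $\tfrac1\theta\mathbf K\big(\nabla\tilde f(\bar{\mathbf x}^{k+1})-\nabla\tilde f(\bar{\mathbf x}^{k})\big)$ in $\mathbf s^{k+1}$.
\begin{itemize}
\item It pairs with $\mathbf s^k$ through $\tfrac1{\bar\varepsilon}\langle\cdot,\cdot\rangle$ with no small factor.
\item The only damping on $\mathbf s^k$ is about $-\theta g\|\mathbf s^k\|^2_{\mathbf K}$.
\item Young's inequality therefore leaves a coefficient of order $\bar M^2/(\bar\varepsilon^2\theta^3 g)$ on $\|\bar{\mathbf x}^{k+1}-\bar{\mathbf x}^k\|^2$.
\item The descent term, of order $1/\zeta^k$, beats this only if $\zeta^k/g$ stays bounded and $\theta$ is large relative to $\bar M$.
\end{itemize}
Your condition $\zeta^k\le 1/(4\bar M)$ does not deliver this, because the comparison is scale-free in $\zeta^k$. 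The paper handles it with \eqref{zetak leq theta2} together with $\theta>2\bar M\sqrt{\kappa_{\mathbf G}^k}$ in \eqref{range varepsilon theta}.

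Once both conditions are added, your per-eigenvalue $2\times2$ trace/determinant check is a viable, arguably more transparent, alternative to the paper's term-by-term Young bounds through $\mathbf Q^k$ and \eqref{GkDkQk}.
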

\begin{proof}
	See Appendix~\ref{proof proposition}.
\end{proof}

Below, we evaluate the convergence performance of UPP-MC via an optimality gap composed of the consensus error and the stationarity violation, which takes the form as
\begin{equation}\label{hatWk}
	\hat{W}^k = \|\mathbf{x}^{k} - \bar{\mathbf{x}}^k\|^2 + \|\mathbf{J}\nabla \tilde{f}(\mathbf{x}^k)\|^2,
\end{equation} 
where $\mathbf{J}=\frac{1}{N}\mathbf{1}_N \mathbf{1}_N ^{\mathsf{T}} \otimes \mathbf{I}_d$. 

\begin{theorem}\label{theorem nonconvex}
	Suppose Assumptions~\ref{assumption smooth}--\ref{assumption polynomial} hold. Let $\{\mathbf{x}^k\}$ be the sequence generated by UPP-MC with the same parameter selections in proposition~\ref{proposition parameter range}. Then, for some $C_1>0$,
	\begin{align}
		\label{theorem 1 sublinear}& \frac{1}{T}\sum _{k=0}^{T-1}\hat{W}^k \leq \frac{C_1}{T}.
	\end{align}

\end{theorem}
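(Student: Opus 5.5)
The plan is to extract from the proof of Proposition~\ref{proposition parameter range} a \emph{quantitative} descent inequality and then telescope it. Proposition~\ref{proposition parameter range} only asserts monotonicity, but its proof must bound $V^{k+1}-V^k$ by a nonpositive combination of squared quantities. I will sharpen this into
\begin{equation*}
V^{k+1}\le V^k-c_1\|\mathbf{x}^k\|_{\mathbf{K}}^2-c_2\|\mathbf{s}^k\|_{\mathbf{K}}^2-c_3\|\bar{x}^k\text{-gradient term}\|^2,
\end{equation*}
where the last term is $\|\nabla f(\bar{x}^k)\|^2$, equivalently $\|\mathbf{J}\nabla\tilde f(\bar{\mathbf{x}}^k)\|^2$ up to a factor $N$. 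The constants $c_1,c_2,c_3>0$ are available under the parameter choices of Proposition~\ref{proposition parameter range}.

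The derivation of this inequality proceeds in three parts.
\begin{itemize}
\item[(i)] Rewrite UPP-MC in terms of the consensus error $\mathbf{K}\mathbf{x}^k$ and the averaged variable $\bar{x}^k$. Since $\mathbf{G}^k,\mathbf{D}^k,\tilde{\mathbf{D}}^k$ are polynomials in $\mathbf{H}$, they commute with $\mathbf{K}$ and $\mathbf{J}$. Moreover, $\mathbf{J}\mathbf{q}^k=0$ by \eqref{q0}. Hence the average evolves as $\bar{x}^{k+1}=\bar{x}^k-\zeta^k\frac{1}{N}\sum_i\nabla f_i(x_i^k)$, because $P_{\tau_d^k}(\mathbf{H})$ annihilates averages.
\item[(ii)] Apply the $\bar M$-descent lemma to $f(\bar{x}^{k+1})-f(\bar{x}^k)$. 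Bound the mismatch between $\mathbf{J}\nabla\tilde f(\mathbf{x}^k)$ and $\mathbf{J}\nabla\tilde f(\bar{\mathbf{x}}^k)$ by $\bar M\|\mathbf{x}^k-\bar{\mathbf{x}}^k\|$, using Young's inequality.
\item[(iii)] Expand the three quadratic terms in $\mathbf{K}$. Use the spectral bounds $\lambda_{N-1}$ and $\lambda_1$ of $\mathbf{G}^k$, $\mathbf{D}^k$, $\tilde{\mathbf{D}}^k$ on $\mathcal{S}^\perp$. Absorb the increment $\nabla\tilde f(\bar{\mathbf{x}}^{k+1})-\nabla\tilde f(\bar{\mathbf{x}}^k)$ in $\mathbf{s}^{k+1}-\mathbf{s}^k$ via smoothness.
\end{itemize}
The cross term $\langle\mathbf{x}^k,\mathbf{K}\mathbf{s}^k\rangle$, weighted against $\frac{1}{2\bar\varepsilon}\|\mathbf{s}^k\|^2_{\mathbf{K}}$, is designed to produce a negative $\|\mathbf{s}^k\|_{\mathbf{K}}^2$ term. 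The choice of $\bar\varepsilon$ also keeps $V^k\ge 0$.

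Next, I relate $\hat W^k$ to the dissipated quantities. We have $\|\mathbf{x}^k-\bar{\mathbf{x}}^k\|^2=\|\mathbf{x}^k\|_{\mathbf{K}}^2$. For the stationarity term,
\begin{equation*}
\|\mathbf{J}\nabla\tilde f(\mathbf{x}^k)\|^2\le 2\|\mathbf{J}\nabla\tilde f(\bar{\mathbf{x}}^k)\|^2+2\bar M^2\|\mathbf{x}^k\|_{\mathbf{K}}^2 .
\end{equation*}
Therefore $\hat W^k\le c\,(V^k-V^{k+1})$ with $c=\max\{(1+2\bar M^2)/c_1,\,2N/c_3\}$, or the analogous constant.

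Summing over $k=0,\dots,T-1$ and using $V^T\ge 0$ gives $\frac1T\sum_{k=0}^{T-1}\hat W^k\le cV^0/T$. The theorem then holds with $C_1=cV^0$, which is finite because $f^*>-\infty$ by Assumption~\ref{assumption: optimality}.

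The main obstacle is step (iii): showing that the $\mathbf{s}$-block actually yields a \emph{strictly} negative term rather than mere nonincrease. The dual update moves $\mathbf{K}\mathbf{q}$ only through $\rho\tilde{\mathbf{D}}^k\mathbf{x}^{k+1}$, so the decrease in $\|\mathbf{s}^k\|_{\mathbf{K}}$ must come from coupling with $\mathbf{x}$. My first attempt is to compute $\langle \mathbf{x}^{k+1},\mathbf{K}\mathbf{s}^{k+1}\rangle-\langle\mathbf{x}^k,\mathbf{K}\mathbf{s}^k\rangle$ explicitly. The $-\theta\mathbf{G}^k\mathbf{q}^k$ contribution in $\mathbf{x}^{k+1}$ then gives $-\theta\lambda_{N-1}^{\mathbf{G}^k}\|\mathbf{s}^k\|_{\mathbf{K}}^2$ plus errors. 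I will choose $\rho$ large relative to $\bar M$ and $\bar\varepsilon$ small so the errors are dominated, with the dependence expressed through $\kappa_{P,l}^k$.
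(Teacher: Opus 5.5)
Your proposal follows the paper's proof: it telescopes the quantitative descent inequality \eqref{tilde Vk+1-tilde Vk}, which the paper proves as a lemma inside the proof of Proposition~\ref{proposition parameter range}. It uses $V^T\ge 0$, which follows from the lower bound $V^k\ge\delta_2\hat V^k$ with $0<\bar\varepsilon<1$, and bounds $\hat W^k=\|\mathbf{x}^k\|_{\mathbf{K}}^2+\|\mathbf{J}\nabla\tilde f(\mathbf{x}^k)\|^2$ by the dissipated terms; the strictly negative $\mathbf{s}$-term you flag as the main obstacle is obtained in that lemma and is not needed in the telescoping itself. The only difference is cosmetic: the paper's inequality already contains a $-\zeta^k(\frac{1}{4}-\epsilon_5^k\zeta^k)\|\mathbf{J}\nabla\tilde f(\mathbf{x}^k)\|^2$ term and so gives $\hat W^k\le W^k$ directly, whereas you recover this quantity from $\|\mathbf{J}\nabla\tilde f(\bar{\mathbf{x}}^k)\|^2$ and the consensus error via \eqref{bar g0k- bar gk}, and both routes work.
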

\begin{proof}
	See Appendix~\ref{proof theorem nonconvex}.
\end{proof}

From \eqref{theorem 1 sublinear}, we have the running average of the optimality gap dissipates at a rate of $\mathcal{O}(1/T)$. This indicates that UPP-MC converges to a stationary solution at a sublinear rate, which is of the same order as \cite{hong_prox-pda_2017,sun2018distributed,sun_distributed_2019,yi2022sublinear,yi2021linear} for smooth nonconvex problems. This extends the convergence results for the distributed convex optimization algorithms in \cite{shi_extra_2015,nedic_achieving_2017,xu_bregman_2018} to nonconvex settings.

\begin{remark}
	For the parameter selections described in Appendix~\ref{proof proposition}, we require the knowledge of the global information $\bar{M}$, $\lambda_1^{\mathbf{H}}$ and $\lambda_{N-1}^{\mathbf{H}}$, which can be collectively found via local communication between neighboring nodes \cite{Wu2020ASP}. Subsequently, we illustrate how to determine the matrices $\mathbf{G}^k,\mathbf{D}^k,\mathbf{\tilde{D}}^k$ in the iterations \eqref{zk final}--\eqref{qk+1 final} under the parameter selections in \eqref{range varepsilon theta}--\eqref{range underline varepsilon}. For example, for $\tau^k>1$, we can determine ${\mathbf{D}^k}=\sum_{t=1}^{\tau^k}b_t^k\mathbf{H}^t \succeq \mathbf{O}_{Nd}$, which can be simply satisfied by choosing $b_1^k,\dots,b_{\tau^k}^k>0$, and thus $\lambda_{1}^{\mathbf{D}^k}=\sum_{t=1}^{\tau^k}b_t^k({\lambda}_1^\mathbf{H})^t$. Subsequently, we can successively determine $\bar{\varepsilon}, \theta, \rho, \lambda_{2}^{\mathbf{G}^k}, \zeta^k$ and $\underline{\varepsilon}$ by \eqref{range varepsilon theta}--\eqref{range underline varepsilon} in Appendix~\ref{proof proposition}. To design $\mathbf{G}^k$, one way is to select appropriate $a_1^k,\dots, a_{\tau^k}^k>0$ for $k\geq 0$ such that $P_{\tau^k}(\mathbf{H})\succeq \mathbf{O}$, and thus each node is able to locally attain ${\lambda}_1^{P_{\tau^k}(\mathbf{H})} = \sum_{t=1}^{\tau^k}a_t^k({\lambda}_1^\mathbf{H})^t$ and ${\lambda}_{N-1}^{P_{\tau^k}(\mathbf{H})} = \sum_{t=1}^{\tau^k}a_t^k({\lambda}_{N-1}^\mathbf{H})^t$. Moreover, we select $\eta^k=(\zeta^k-\lambda_2^{\mathbf{G}^k})/\lambda_{N-1}^{P_{\tau^k}(\mathbf{H})}$. Since $\zeta^k < \kappa_p^k\lambda_{2}^{\mathbf{G}^k}/(\kappa_p^k-1)$, we have $\lambda_N^{\mathbf{G}^k} = \zeta^k - \eta^k\lambda_1^{P_{\tau^k}(\mathbf{H})}>0$, which satisfies Assumption~\ref{assumption polynomial}. Furthermore, we set $\tilde{\mathbf{D}}^k$ by $\underline{\varepsilon}\mathbf{D}^k\mathbf{G}^k\preceq\tilde{\mathbf{D}}^k \preceq \bar{\varepsilon}\mathbf{D}^k\mathbf{G}^k$ in \eqref{tildeD range}, and thus the matrices $\mathbf{D}^k$ and $\tilde{\mathbf{D}}^k$ satisfy Assumption~\ref{assumption: Dk}.
\end{remark}

Next, we analyze the convergence of UPP-MC under the P-{\L} condition.
\begin{assumption}\label{PL condition}
	The global cost function $f(x)$ satisfies the P-{\L} condition with constant $\nu>0$, i.e.,
	\begin{equation}
		\|\nabla f(x)\|^2 \geq 2\nu (f(x) - f^*), \quad \forall x \in \Re ^d. \label{pl}
	\end{equation}
\end{assumption}

Note that the P-{\L} condition is weaker than strong convexity, and can guarantee the global optimum without convexity. 

\begin{theorem}\label{theorem pl}
	Suppose Assumptions~\ref{assumption smooth}--\ref{PL condition} hold. Let $\{\mathbf{x}^k\}$ be the sequence generated by UPP-MC with proper parameters. Then, for some $C_2>0$ and $\delta\in(0,1)$,
	\begin{equation}
		\|\mathbf{x}^k\|^2_{\mathbf{K}} + f(\bar{x}^k) - f^* \leq C_2(1-\delta)^{k}, \quad \forall k \geq 1.
		\label{theorem 2 linear}
	\end{equation}
\end{theorem}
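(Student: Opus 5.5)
We would prove Theorem~\ref{theorem pl} by strengthening the descent inequality behind Proposition~\ref{proposition parameter range} into a contraction of the Lyapunov function $V^k$ in \eqref{def V}. The proof of Proposition~\ref{proposition parameter range} (Appendix~\ref{proof proposition}) yields, under the stated parameter ranges, a one-step bound of the form
\begin{equation*}
V^{k+1}\le V^k - c_1\|\mathbf{x}^k\|_{\mathbf{K}}^2 - c_2\|\mathbf{s}^k\|_{\mathbf{K}}^2 - c_3\|\nabla f(\bar{x}^k)\|^2
\end{equation*}
for some $c_1,c_2,c_3>0$. We take this inequality as the starting point; it is the same one that gives Theorem~\ref{theorem nonconvex} after telescoping. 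If the appendix only provides the gradient term at $\mathbf{x}^k$ or $\bar{\mathbf{x}}^k$, we pass to $\nabla f(\bar{x}^k)$. This is done with $\mathbf{J}\nabla\tilde f(\bar{\mathbf{x}}^k)=\frac{1}{N}\mathbf{1}_N\otimes\nabla f(\bar x^k)$, together with smoothness (Assumption~\ref{assumption smooth}) and $\|\mathbf{J}(\nabla\tilde f(\mathbf{x}^k)-\nabla\tilde f(\bar{\mathbf{x}}^k))\|\le\bar M\|\mathbf{x}^k\|_{\mathbf{K}}$. The consensus term absorbs the resulting error.

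The next step is to upper bound $V^k$ by the dissipated quantities. The cross term satisfies $|\langle\mathbf{x}^k,\mathbf{K}\mathbf{s}^k\rangle|\le\frac12\|\mathbf{x}^k\|_{\mathbf{K}}^2+\frac12\|\mathbf{s}^k\|_{\mathbf{K}}^2$, which gives $V^k\le \|\mathbf{x}^k\|^2_{\mathbf{K}}+(\frac{1}{2\bar\varepsilon}+\frac12)\|\mathbf{s}^k\|^2_{\mathbf{K}}+f(\bar x^k)-f^*$. Assumption~\ref{PL condition} gives $f(\bar x^k)-f^*\le\frac{1}{2\nu}\|\nabla f(\bar x^k)\|^2$. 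Hence $V^k\le c_4(\|\mathbf{x}^k\|_{\mathbf{K}}^2+\|\mathbf{s}^k\|_{\mathbf{K}}^2+\|\nabla f(\bar x^k)\|^2)$ with $c_4=\max\{1,\frac{1}{2\bar\varepsilon}+\frac12,\frac{1}{2\nu}\}$. Combining this with the descent bound yields $V^{k+1}\le(1-\delta)V^k$, where $\delta=\min\{c_1,c_2,c_3\}/c_4\in(0,1)$; we can shrink $\delta$ if necessary to keep it below one. Iterating gives $V^k\le(1-\delta)^kV^0$.

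Finally we need a lower bound on $V^k$. Nonnegativity of $V^k$ alone is not enough, because we must control $\|\mathbf{x}^k\|_{\mathbf{K}}^2+f(\bar x^k)-f^*$. We would require $\bar\varepsilon<1$, or whichever condition the parameter selection already enforces, so that Young's inequality with weight gives $\langle\mathbf{x},\mathbf{K}\mathbf{s}\rangle\ge-\frac{1-\omega}{2}\|\mathbf{x}\|^2_{\mathbf{K}}-\frac{1}{2(1-\omega)}\|\mathbf{s}\|^2_{\mathbf{K}}$ for some $\omega\in(0,1)$ with $\frac{1}{1-\omega}\le\frac{1}{\bar\varepsilon}$. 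Then $V^k\ge\frac{\omega}{2}\|\mathbf{x}^k\|_{\mathbf{K}}^2+f(\bar x^k)-f^*$, using $f(\bar x^k)\ge f^*$ from Assumption~\ref{assumption: optimality}. This gives \eqref{theorem 2 linear} with $C_2=V^0/\min\{\omega/2,1\}$; shifting the index to start from $k\ge1$ only changes the constant.

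We expect the main obstacle to be the first step. We must verify that the descent inequality from the appendix really contains a strictly negative multiple of each of the three terms, in particular of $\|\mathbf{s}^k\|^2_{\mathbf{K}}$. Dissipation of the dual or stationarity error typically requires the lower bound $\underline{\varepsilon}\mathbf{D}^k\mathbf{G}^k\preceq\tilde{\mathbf{D}}^k$ together with $\mathbf{q}^k\in\mathcal{S}^\perp$, used through the restricted eigenvalue $\lambda_{N-1}$ of the commuting polynomials. If the inequality of Proposition~\ref{proposition parameter range} only dissipates $\|\mathbf{s}^{k+1}-\mathbf{s}^k\|$-type terms, we would redo that step. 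We would expand $\mathbf{s}^{k+1}$ through \eqref{qk+1 final}, and this is what forces the extra parameter restrictions hidden in the phrase ``proper parameters.''
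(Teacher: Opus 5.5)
Your proposal is correct and follows essentially the same route as the paper. The paper starts from the one-step descent inequality for $V^k$ in Appendix A. That inequality already contains strictly negative multiples of $\|\mathbf{x}^k\|_{\mathbf{K}}^2$, $\|\mathbf{s}^k\|_{\mathbf{K}}^2$ and $\|\bar{\mathbf{g}}_a^k\|^2=\|\mathbf{J}\nabla\tilde f(\bar{\mathbf{x}}^k)\|^2$, so the obstacle you anticipate does not arise. The paper then applies the P-{\L} condition to that gradient term and uses the two-sided Young-type bounds $\delta_2\hat{V}^k\le V^k\le\delta_1\hat{V}^k$, where the lower bound needs $\bar\varepsilon<1$ just as in your weighted-Young step. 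This gives $V^{k+1}\le(1-\delta)V^k$ and transfers the rate to $\|\mathbf{x}^k\|_{\mathbf{K}}^2+f(\bar x^k)-f^*$. The only difference is cosmetic: you apply P-{\L} inside the upper bound on $V^k$, whereas the paper applies it inside the descent inequality, and the two are equivalent.
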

\begin{proof}
	See Appendix~\ref{proof theorem pl}.
\end{proof}

Theorem~\ref{theorem pl} shows that UPP enjoys a linear convergence to the global optimum under the P-{\L} condition. Similar linear rates have been achieved in \cite{yi2022sublinear,yi2021linear,alghunaim_unified_2022} under the P-{\L} condition. This result relaxes the strong convexity in \cite{shi_extra_2015,nedic_achieving_2017,xu_bregman_2018} and explores the global optimality for distributed nonconvex algorithms (e.g., \cite{alghunaim_unified_2022,hong_prox-pda_2017,sun2018distributed}).


\subsection{Convergence analysis of UPP-SC}\label{convergence of UPP-SC}

This subsection states the convergence results for UPP-SC (described by \eqref{xk+1 opt}--\eqref{qk+1 opt}).

	For better presentation of our analysis, we reinterpret UPP-SC \eqref{xk+1 opt}--\eqref{qk+1 opt} as follows: Since $\operatorname{Null}(\mathbf{L}) = \mathcal{S} := \{\mathbf{x} \in \Re ^{Nd}|x_1 = \cdots = x_N\}$, we reformulate the constraint $\bH^{\frac{1}{2}}\mathbf{x}=0$ in problem~\eqref{p2} as $\mathbf{L}^{\frac{1}{2}}\mathbf{x}=0$. Accordingly, we replace $\mathbf{H}$ in the primal-dual updates \eqref{argmin approximation primal}--\eqref{argmin approximation dual} with $\mathbf{L}$ and additionally require $\mathbf{B}^k\succ \mathbf{O}_{Nd}$. Similar to the discussion below \eqref{argmin approximation primal}--\eqref{argmin approximation dual}, the resulting updates attempt to minimize the following Augmented Lagrangian (AL) function
	\begin{equation}
		\operatorname{AL}(\mathbf{x},\mathbf{v})=\tilde{f}(\mathbf{x})+(\mathbf{v})^{\mathsf{T}} \mathbf{L}^{\frac{1}{2}}\mathbf{x}+\frac{\rho}{2}\|\mathbf{x}\|^2_\mathbf{L}. \label{AL}
	\end{equation}
	For convenience, we denote $\operatorname{AL}(\mathbf{x}^k,\mathbf{v}^k)$ by $\operatorname{AL}^k$. Moreover, the first-order optimality condition arising from such a primal update (with $\theta=1$) gives
	\begin{equation}
		\label{first-order opt modified} \nabla \tilde{f}(\mathbf{x}^k) + \mathbf{L}^{\frac{1}{2}}\mathbf{v}^k + \rho \mathbf{L} \mathbf{x}^{k+1}+ \mathbf{B}^k(\mathbf{x}^{k+1} - \mathbf{x}^k)=0,
	\end{equation}
	which is an important equation for our derivations. Then, the update equations \eqref{xk+1 opt}--\eqref{qk+1 opt} of UPP-SC can be obtained by the changes of variable $\mathbf{q}^k=\mathbf{L}^{\frac{1}{2}}\mathbf{v}^k$, $\mathbf{G}^k = (\mathbf{B}^k+\rho \mathbf{L})^{-1}$. Note that \eqref{qk+1 opt} is equivalent to 
	\begin{equation}
		\mathbf{v}^{k+1}=\mathbf{v}^k+\rho \mathbf{L}^{\frac{1}{2}}\mathbf{x}^{k+1}. \label{vk+1 opt}
	\end{equation}

With the above reinterpretation, we provide the convergence analysis for UPP-SC. The following lemma connects the primal and dual variables, in which we define 
\begin{gather}
	\label{kappa B} \lambda_1^{\mathbf{B}}:= \max_{k \geq 0} \lambda_1^{\mathbf{B}^k}, \quad \lambda_N^{\mathbf{B}}:= \min_{k \geq 0} \lambda_N^{\mathbf{B}^k}, \quad \kappa_{\mathbf{B}} = {\lambda_1^{\mathbf{B}}}/{\lambda_N^{\mathbf{B}}},\\
	\label{tilde kappa} \tilde{\kappa} := {\lambda_1^{\mathbf{B}}}/({\rho \lambda_{N-1}^{\mathbf{L}}}), \\ 
	\label{wk+1} \mathbf{w}^{k+1} := (\mathbf{x}^{k+1}-\mathbf{x}^{k})-(\mathbf{x}^{k}-\mathbf{x}^{k-1}).
\end{gather}

\begin{lemma} \label{lemma vk+1 - vk}
	Suppose Assumptions~\ref{assumption smooth}--\ref{assumption: distributed} hold. Let $\{\mathbf{x}^k\}$ be the sequence generated by UPP-SC with $\mathbf{O}\prec \lambda_N^{\mathbf{B}^k} \mathbf{I}\preceq \mathbf{B}^{k}\preceq \lambda_1^{\mathbf{B}^k}\mathbf{I} $. Then, for all $k\geq 0$,
	\begin{align}
		&\rho\|\mathbf{x}^{k+1}\|^2_{\mathbf{L}}=\frac{1}{\rho}\|\mathbf{v}^{k+1}-\mathbf{v}^k\|^2\leq 3\tilde{\kappa} \big((c_{\mathbf{B}}+\bar{M}^2)\|\mathbf{x}^k-\mathbf{x}^{k-1}\|^2_{(\mathbf{B}^{k})^{-1}}+\|\mathbf{w}^{k+1}\|^2_{\mathbf{B}^k}\big), \label{vk+1 - vk}
	\end{align}
	where $c_{\mathbf{B}}=4(\lambda_1^\mathbf{B})^2$ and $\{\mathbf{v}^k\}_{k=0}^\infty$ is given by \eqref{vk+1 opt}.
\end{lemma}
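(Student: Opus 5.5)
The equality is immediate from \eqref{vk+1 opt}: $\mathbf{v}^{k+1}-\mathbf{v}^k=\rho\mathbf{L}^{\frac12}\mathbf{x}^{k+1}$, so $\frac1\rho\|\mathbf{v}^{k+1}-\mathbf{v}^k\|^2=\rho\|\mathbf{x}^{k+1}\|^2_{\mathbf{L}}$. For the inequality, I will express the dual increment through primal quantities using the optimality condition \eqref{first-order opt modified} at two consecutive iterations.

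Writing \eqref{first-order opt modified} at $k$ and $k-1$ and subtracting gives
\begin{equation*}
\nabla\tilde f(\mathbf{x}^k)-\nabla\tilde f(\mathbf{x}^{k-1})+\mathbf{L}^{\frac12}(\mathbf{v}^k-\mathbf{v}^{k-1})+\rho\mathbf{L}(\mathbf{x}^{k+1}-\mathbf{x}^k)+\mathbf{B}^k(\mathbf{x}^{k+1}-\mathbf{x}^k)-\mathbf{B}^{k-1}(\mathbf{x}^k-\mathbf{x}^{k-1})=0.
\end{equation*}
Since $\mathbf{L}^{\frac12}(\mathbf{v}^k-\mathbf{v}^{k-1})+\rho\mathbf{L}(\mathbf{x}^{k+1}-\mathbf{x}^k)=\rho\mathbf{L}\mathbf{x}^{k+1}=\mathbf{L}^{\frac12}(\mathbf{v}^{k+1}-\mathbf{v}^k)$, this becomes
\begin{equation*}
\mathbf{L}^{\frac12}(\mathbf{v}^{k+1}-\mathbf{v}^k)=-\big(\nabla\tilde f(\mathbf{x}^k)-\nabla\tilde f(\mathbf{x}^{k-1})\big)-\mathbf{B}^k\mathbf{w}^{k+1}-(\mathbf{B}^k-\mathbf{B}^{k-1})(\mathbf{x}^k-\mathbf{x}^{k-1}).
\end{equation*}
The case $k=0$ is handled by the initialization $\mathbf{x}^{-1}=0$, $\mathbf{q}^{-1}=0$, which is consistent with the same relation when \eqref{first-order opt modified} is taken at $k=-1$.

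Next I lower-bound the left side. Because $\mathbf{v}^{k+1}-\mathbf{v}^k=\rho\mathbf{L}^{\frac12}\mathbf{x}^{k+1}\in\operatorname{span}(\mathbf{L}^{\frac12})\subseteq\mathcal{S}^\perp$, we have $\|\mathbf{L}^{\frac12}(\mathbf{v}^{k+1}-\mathbf{v}^k)\|^2\ge\lambda_{N-1}^{\mathbf{L}}\|\mathbf{v}^{k+1}-\mathbf{v}^k\|^2$. Dividing by $\rho\lambda_{N-1}^{\mathbf{L}}$ and applying $\|a+b+c\|^2\le3(\|a\|^2+\|b\|^2+\|c\|^2)$ gives
\begin{equation*}
\frac1\rho\|\mathbf{v}^{k+1}-\mathbf{v}^k\|^2\le\frac{3}{\rho\lambda_{N-1}^{\mathbf{L}}}\Big(\bar M^2\|\mathbf{x}^k-\mathbf{x}^{k-1}\|^2+\|\mathbf{B}^k\mathbf{w}^{k+1}\|^2+\|\mathbf{B}^k-\mathbf{B}^{k-1}\|^2\|\mathbf{x}^k-\mathbf{x}^{k-1}\|^2\Big),
\end{equation*}
where the gradient term uses Assumption~\ref{assumption smooth}.

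It remains to convert these terms into the weighted norms in \eqref{vk+1 - vk}:
\begin{itemize}
\item[(i)] $\|\mathbf{B}^k\mathbf{w}^{k+1}\|^2\le\lambda_1^{\mathbf{B}}\|\mathbf{w}^{k+1}\|^2_{\mathbf{B}^k}$.
\item[(ii)] $\|\mathbf{x}^k-\mathbf{x}^{k-1}\|^2\le\lambda_1^{\mathbf{B}}\|\mathbf{x}^k-\mathbf{x}^{k-1}\|^2_{(\mathbf{B}^k)^{-1}}$, since $(\mathbf{B}^k)^{-1}\succeq(\lambda_1^{\mathbf{B}})^{-1}\mathbf{I}$.
\item[(iii)] $\|\mathbf{B}^k-\mathbf{B}^{k-1}\|\le2\lambda_1^{\mathbf{B}}$, so its square is at most $4(\lambda_1^{\mathbf{B}})^2=c_{\mathbf{B}}$.
\end{itemize}
Each term then carries a factor $\lambda_1^{\mathbf{B}}/(\rho\lambda_{N-1}^{\mathbf{L}})=\tilde\kappa$, which yields \eqref{vk+1 - vk}.

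The main obstacle is the bookkeeping at the start of the recursion, namely justifying the $k=0$ case from the initialization with $\mathbf{B}^{-1}$. One also has to make sure the spectral bound uses $\lambda_{N-1}^{\mathbf{L}}$ on the correct subspace, which is what the membership $\mathbf{v}^{k+1}-\mathbf{v}^k\in\mathcal{S}^\perp$ guarantees.
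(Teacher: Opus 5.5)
Your proposal is correct and follows the paper's proof step for step. It uses the same identity $\mathbf{L}^{\frac12}(\mathbf{v}^{k+1}-\mathbf{v}^k)=-(\nabla\tilde f(\mathbf{x}^k)-\nabla\tilde f(\mathbf{x}^{k-1}))-\mathbf{B}^k\mathbf{w}^{k+1}-(\mathbf{B}^k-\mathbf{B}^{k-1})(\mathbf{x}^k-\mathbf{x}^{k-1})$ (you derive it explicitly, whereas the paper only asserts it), the same $\lambda_{N-1}^{\mathbf{L}}$ bound on $\mathcal{S}^\perp$ with the factor-$3$ split, and the same conversions to the $(\mathbf{B}^k)^{-1}$- and $\mathbf{B}^k$-weighted norms with $c_{\mathbf{B}}=4(\lambda_1^{\mathbf{B}})^2$.

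On the $k=0$ case you flag: it holds exactly when $\mathbf{v}^{-1}=0$ and $\mathbf{G}^{-1}=\mathbf{G}^0$, so that $\mathbf{B}^{-1}=\mathbf{B}^0$ and $\|\mathbf{B}^{-1}\|\le\lambda_1^{\mathbf{B}}$. This is what the initialization $x_i^0=-\mathbf{G}_i^0\nabla\tilde f(0)$ realizes, and the paper states it explicitly only later, in Lemma~\ref{lemma tildePk+1 geq f*}.
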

\begin{proof}
	See Appendix~\ref{proof lemma vk+1 - vk}.
\end{proof}

In the next lemma, we bound the decrease of the AL function \eqref{AL}.

\begin{lemma} \label{lemma ALk+1 - ALk}
	Suppose all the conditions in Lemma~\ref{lemma vk+1 - vk} hold. For all $k\geq 0$,
	\begin{align}
		&\operatorname{AL}^{k+1}-\operatorname{AL}^{k} \leq  -\|\mathbf{x}^{k+1}-\mathbf{x}^{k}\|^2_{\mathbf{B}^k+\frac{1}{2}\rho \mathbf{L}-\frac{\bar{M}}{2}\mathbf{I}_{Nd}} + \rho\|\mathbf{x}^{k+1}\|^2_{\mathbf{L}}. \label{ALk+1 - ALk lemma}
	\end{align}
\end{lemma}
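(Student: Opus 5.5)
The plan is to split the change of the AL function into a primal step and a dual step:
\[
\operatorname{AL}^{k+1}-\operatorname{AL}^k=\big[\operatorname{AL}(\mathbf{x}^{k+1},\mathbf{v}^{k})-\operatorname{AL}(\mathbf{x}^{k},\mathbf{v}^{k})\big]+\big[\operatorname{AL}(\mathbf{x}^{k+1},\mathbf{v}^{k+1})-\operatorname{AL}(\mathbf{x}^{k+1},\mathbf{v}^{k})\big].
\]
The dual part is exact. Since $\operatorname{AL}$ is linear in $\mathbf{v}$, \eqref{vk+1 opt} gives
\[
(\mathbf{v}^{k+1}-\mathbf{v}^k)^{\mathsf{T}}\mathbf{L}^{\frac12}\mathbf{x}^{k+1}=\rho\|\mathbf{x}^{k+1}\|^2_{\mathbf{L}}.
\]
This is exactly the positive term on the right-hand side of \eqref{ALk+1 - ALk lemma}. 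All remaining work concerns the primal part.

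For the primal part, use $\bar M$-smoothness of $\tilde f$ (Assumption~\ref{assumption smooth}) through the descent lemma:
\[
\tilde f(\mathbf{x}^{k+1})\le \tilde f(\mathbf{x}^k)+\langle\nabla\tilde f(\mathbf{x}^k),\mathbf{x}^{k+1}-\mathbf{x}^k\rangle+\tfrac{\bar M}{2}\|\mathbf{x}^{k+1}-\mathbf{x}^k\|^2 .
\]
The multiplier term contributes $(\mathbf{v}^k)^{\mathsf{T}}\mathbf{L}^{\frac12}(\mathbf{x}^{k+1}-\mathbf{x}^k)$. For the quadratic penalty, apply the identity $\|a\|_{\mathbf{L}}^2-\|b\|_{\mathbf{L}}^2=2\langle a,\mathbf{L}(a-b)\rangle-\|a-b\|^2_{\mathbf{L}}$ with $a=\mathbf{x}^{k+1}$, $b=\mathbf{x}^k$. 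This gives
\[
\tfrac{\rho}{2}\big(\|\mathbf{x}^{k+1}\|^2_{\mathbf{L}}-\|\mathbf{x}^k\|^2_{\mathbf{L}}\big)=\rho\langle \mathbf{L}\mathbf{x}^{k+1},\mathbf{x}^{k+1}-\mathbf{x}^k\rangle-\tfrac{\rho}{2}\|\mathbf{x}^{k+1}-\mathbf{x}^k\|^2_{\mathbf{L}} .
\]

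Summing these three pieces, the inner product against $\mathbf{x}^{k+1}-\mathbf{x}^k$ becomes
\[
\nabla\tilde f(\mathbf{x}^k)+\mathbf{L}^{\frac12}\mathbf{v}^k+\rho\mathbf{L}\mathbf{x}^{k+1}.
\]
By the optimality condition \eqref{first-order opt modified}, this equals $-\mathbf{B}^k(\mathbf{x}^{k+1}-\mathbf{x}^k)$, so it contributes $-\|\mathbf{x}^{k+1}-\mathbf{x}^k\|^2_{\mathbf{B}^k}$. Collecting terms yields
\[
\operatorname{AL}(\mathbf{x}^{k+1},\mathbf{v}^k)-\operatorname{AL}^k\le -\|\mathbf{x}^{k+1}-\mathbf{x}^k\|^2_{\mathbf{B}^k+\frac{\rho}{2}\mathbf{L}-\frac{\bar M}{2}\mathbf{I}_{Nd}} .
\]
Adding the dual part gives \eqref{ALk+1 - ALk lemma}.

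The main point needing care is the link between the algorithm and the identities used. The optimality condition \eqref{first-order opt modified} with $\theta=1$ and \eqref{vk+1 opt} must actually describe the iterates of UPP-SC. This requires $\mathbf{q}^k=\mathbf{L}^{\frac12}\mathbf{v}^k$ for a well-defined $\mathbf{v}^k$, which holds because $\mathbf{q}^k\in\mathcal{S}^\perp=\operatorname{span}(\mathbf{L}^{\frac12})$. It also requires $\mathbf{G}^k=(\mathbf{B}^k+\rho\mathbf{L})^{-1}$ with $\mathbf{B}^k$ symmetric, which is what the reinterpretation before Lemma~\ref{lemma vk+1 - vk} provides. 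Once these are in place, the rest is bookkeeping, and the bound $\mathbf{B}^k\succ\mathbf{O}$ is not even needed for this inequality.
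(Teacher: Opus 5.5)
Your proof is correct and follows essentially the same route as the paper: you split $\operatorname{AL}^{k+1}-\operatorname{AL}^k$ into a primal step, bounded via the descent lemma and the update relation, and an exact dual step $\langle \mathbf{L}^{\frac12}(\mathbf{v}^{k+1}-\mathbf{v}^k),\mathbf{x}^{k+1}\rangle=\rho\|\mathbf{x}^{k+1}\|^2_{\mathbf{L}}$. The only difference is bookkeeping: the paper substitutes the explicit update \eqref{xk+1 opt}, expands the penalty around $\mathbf{x}^k$ to get the weight $(\mathbf{G}^k)^{-1}-\frac{\bar{M}}{2}\mathbf{I}_{Nd}-\frac12\rho\mathbf{L}$, and then uses $(\mathbf{G}^k)^{-1}=\mathbf{B}^k+\rho\mathbf{L}$, whereas you use \eqref{first-order opt modified} and expand around $\mathbf{x}^{k+1}$, which reaches the same expression directly.
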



Subsequently, we generate the following decreasing sequence (for some $\tilde{c},\tilde{\kappa}>0$):
\begin{align}
	\tilde{P}^{k+1}=&\operatorname{AL}^{k+1}+\|\mathbf{x}^{k+1}-\mathbf{x}^k\|^2_{3\tilde{\kappa} (c_{\mathbf{B}}+\bar{M}^2)(\mathbf{B}^{k+1})^{-1}}+\frac{\tilde{c}}{2}\big(\rho\|\mathbf{x}^{k+1}\|^2_\mathbf{L}\notag\\
    &+\|\mathbf{x}^{k+1}-\mathbf{x}^k\|^2_{\mathbf{B}^{k+1}+3\bar{M}\mathbf{I}_{Nd}+3\tilde{\kappa} (c_{\mathbf{B}}+\bar{M}^2)(\mathbf{B}^{k+1})^{-1}+\rho \mathbf{L}}\big). \label{tilde Pk+1}
\end{align}
In the next lemma, we show that, with proper parameters, the auxiliary functions will decrease along iterations. 

\begin{lemma}\label{lemma tildePk+1 nonincreasing}
	Suppose all the conditions in Lemma~\ref{lemma vk+1 - vk} hold.  Let $\{\mathbf{x}^k\}$ be the sequence generated by UPP-SC with the parameters selected as follows:
	\begin{align}
		\label{define tilde c}& 1 \geq \tilde{c}\geq 9\tilde{\kappa}=\max_{k\geq 0}\frac{9\lambda_1^{\mathbf{B}^k}}{\rho \lambda_{N-1}^{\mathbf{L}}}>0, \\
		\label{Bk+rhoL-c/2Bk-1}& \frac{1}{4}(\mathbf{B}^k+\rho \mathbf{L})-\frac{\tilde{c}}{2}(\mathbf{B}^{k+1}-\mathbf{B}^k+\mathbf{B}^{k-1})\!-\!(\frac{1}{2}+2\tilde{c})\bar{M}\mathbf{I}_{Nd}-\frac{\tilde{c}(2+\tilde{c})}{6}(c_{\mathbf{B}}+\bar{M}^2)(\mathbf{B}^{k+1})^{-1} \succeq \mathbf{O}_{Nd}.
	\end{align}
	Then, for all $k\geq 0$, we have
	\begin{align}
		\tilde{P}^{k+1}-\tilde{P}^{k}\leq& -\|\mathbf{x}^{k+1}-\mathbf{x}^k\|^2_{\frac{1}{4}(\mathbf{B}^k+\rho \mathbf{L})}-\frac{\tilde{c}}{2} \|\mathbf{w}^{k+1}\|^2_{\frac{(1-\tilde{c})}{3}\mathbf{B}^k+\mathbf{B}^{k-1}}\leq 0.  \label{tildePk+1 - tildePk lemma}
	\end{align}
\end{lemma}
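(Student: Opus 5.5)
The argument is a Lyapunov-style telescoping of the four pieces of $\tilde{P}^{k+1}-\tilde{P}^{k}$:
\begin{itemize}
\item the increment $\operatorname{AL}^{k+1}-\operatorname{AL}^k$;
\item the change in the $(\mathbf{B}^{\cdot})^{-1}$-weighted term $\|\mathbf{x}^{k+1}-\mathbf{x}^k\|^2_{3\tilde{\kappa}(c_{\mathbf{B}}+\bar{M}^2)(\mathbf{B}^{k+1})^{-1}}$;
\item the change in $\frac{\tilde{c}}{2}\rho\|\mathbf{x}^{k+1}\|^2_{\mathbf{L}}$;
\item the change in the $\frac{\tilde{c}}{2}$-weighted successive-difference term.
\end{itemize}
Each piece is controlled by Lemma~\ref{lemma ALk+1 - ALk}, Lemma~\ref{lemma vk+1 - vk}, or a new "dual-difference" inequality. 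The new inequality bounds $\rho\|\mathbf{x}^{k+1}\|^2_{\mathbf{L}}-\rho\|\mathbf{x}^{k}\|^2_{\mathbf{L}}$ plus differences of $\|\mathbf{x}^{k+1}-\mathbf{x}^k\|^2_{\mathbf{B}}$-type terms by $-\|\mathbf{w}^{k+1}\|^2$-type terms.

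\textbf{Step 1 (dual-difference inequality).} Subtract the optimality condition \eqref{first-order opt modified} at iteration $k-1$ from that at iteration $k$, and use \eqref{vk+1 opt} to replace $\mathbf{L}^{\frac12}(\mathbf{v}^k-\mathbf{v}^{k-1})$ by $\rho\mathbf{L}\mathbf{x}^k$. This gives
\[
\nabla\tilde f(\mathbf{x}^k)-\nabla\tilde f(\mathbf{x}^{k-1})+\rho\mathbf{L}\mathbf{x}^{k+1}+\rho\mathbf{L}(\mathbf{x}^{k+1}-\mathbf{x}^k)+\mathbf{B}^k(\mathbf{x}^{k+1}-\mathbf{x}^k)-\mathbf{B}^{k-1}(\mathbf{x}^k-\mathbf{x}^{k-1})=0.
\]
Take the inner product with $\mathbf{x}^{k+1}-\mathbf{x}^k$ and handle each term as follows.
\begin{itemize}
\item Apply the polarization identity $2\langle a,a-b\rangle=\|a\|^2-\|b\|^2+\|a-b\|^2$ to the $\rho\mathbf{L}\mathbf{x}^{k+1}$ term. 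This produces $\frac{\rho}{2}(\|\mathbf{x}^{k+1}\|_\mathbf{L}^2-\|\mathbf{x}^k\|_\mathbf{L}^2+\|\mathbf{x}^{k+1}-\mathbf{x}^k\|_\mathbf{L}^2)$.
\item Write $\mathbf{B}^k(\mathbf{x}^{k+1}-\mathbf{x}^k)-\mathbf{B}^{k-1}(\mathbf{x}^k-\mathbf{x}^{k-1})=\mathbf{B}^k\mathbf{w}^{k+1}+(\mathbf{B}^k-\mathbf{B}^{k-1})(\mathbf{x}^k-\mathbf{x}^{k-1})$.
\item Polarize $\langle\mathbf{B}^k\mathbf{w}^{k+1},\mathbf{x}^{k+1}-\mathbf{x}^k\rangle$ to get $\frac12(\|\mathbf{x}^{k+1}-\mathbf{x}^k\|^2_{\mathbf{B}^k}-\|\mathbf{x}^k-\mathbf{x}^{k-1}\|^2_{\mathbf{B}^k}+\|\mathbf{w}^{k+1}\|^2_{\mathbf{B}^k})$.
\item Bound the gradient cross term by Young's inequality with $\bar M$-smoothness, giving $\le \frac{\bar M}{2}(\|\mathbf{x}^{k+1}-\mathbf{x}^k\|^2+\|\mathbf{x}^k-\mathbf{x}^{k-1}\|^2)$.
\end{itemize}
The mismatch $\mathbf{B}^k-\mathbf{B}^{k-1}$ is what forces the combination $\mathbf{B}^{k+1}-\mathbf{B}^k+\mathbf{B}^{k-1}$ to appear in \eqref{Bk+rhoL-c/2Bk-1}. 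I would split it by Young's inequality so that part is absorbed by $\|\mathbf{w}^{k+1}\|^2_{\mathbf{B}^{k-1}}$; that is the origin of the $\mathbf{B}^{k-1}$ weight on $\mathbf{w}^{k+1}$ in \eqref{tildePk+1 - tildePk lemma}.

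\textbf{Step 2 (assembly).} Multiply the Step 1 inequality by $\tilde c$ and add it to \eqref{ALk+1 - ALk lemma}. The positive term $\rho\|\mathbf{x}^{k+1}\|^2_\mathbf{L}$ in \eqref{ALk+1 - ALk lemma} is bounded by Lemma~\ref{lemma vk+1 - vk}, giving $3\tilde\kappa(c_\mathbf{B}+\bar M^2)\|\mathbf{x}^k-\mathbf{x}^{k-1}\|^2_{(\mathbf{B}^k)^{-1}}+3\tilde\kappa\|\mathbf{w}^{k+1}\|^2_{\mathbf{B}^k}$. The first piece telescopes against the $(\mathbf{B}^{k+1})^{-1}$-weighted term in $\tilde P$; this is why $\tilde P$ carries it at index $k+1$. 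Since $\tilde c\ge 9\tilde\kappa$, the second piece is absorbed by $\frac{\tilde c}{2}\|\mathbf{w}^{k+1}\|^2_{\mathbf{B}^k}$ from Step 1, leaving $\frac{\tilde c}{2}\cdot\frac{1-\tilde c}{3}\|\mathbf{w}^{k+1}\|^2_{\mathbf{B}^k}$ (the bookkeeping $3\tilde\kappa\le\tilde c/3$). The remaining $\tilde c$-weighted $\rho\|\mathbf{x}^{k+1}\|^2_\mathbf{L}$ increment is handled by applying Lemma~\ref{lemma vk+1 - vk} once more; this generates the $\frac{\tilde c(2+\tilde c)}{6}(c_\mathbf{B}+\bar M^2)(\mathbf{B}^{k+1})^{-1}$ term.

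\textbf{Step 3 (main obstacle).} Collect all coefficients of $\|\mathbf{x}^{k+1}-\mathbf{x}^k\|^2$. Separate out $\frac14(\mathbf{B}^k+\rho\mathbf{L})$, and check that the remainder is exactly the matrix in \eqref{Bk+rhoL-c/2Bk-1}, hence PSD. The hard part is the index shifting in the time-varying weights $\mathbf{B}^{k\pm1}$ and $(\mathbf{B}^{k+1})^{-1}$: terms measured at $\|\mathbf{x}^k-\mathbf{x}^{k-1}\|$ in iteration $k$ must match those stored in $\tilde P^k$ with weights $\mathbf{B}^k$. I would choose the $\tilde P$ weights to make these cancel exactly and verify coefficient by coefficient, tuning the Young splits (constants $3$, $\frac12+2\tilde c$) to reproduce \eqref{Bk+rhoL-c/2Bk-1}. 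Finally, note that $\tilde c\le1$ makes $\frac{1-\tilde c}{3}\mathbf{B}^k+\mathbf{B}^{k-1}\succeq 0$, so the bound in \eqref{tildePk+1 - tildePk lemma} is nonpositive.
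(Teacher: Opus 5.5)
Your proposal has a genuine gap. The failing step is Step~1, where you pair the differenced optimality condition with $\mathbf{x}^{k+1}-\mathbf{x}^k$.

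There is also a small slip in the differenced identity. Since $\mathbf{L}^{\frac12}(\mathbf{v}^k-\mathbf{v}^{k-1})=\rho\mathbf{L}\mathbf{x}^k$, the $\mathbf{L}$-terms combine to $\rho\mathbf{L}\mathbf{x}^{k+1}$ alone, and the extra $\rho\mathbf{L}(\mathbf{x}^{k+1}-\mathbf{x}^k)$ should not appear.

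Pairing with $\mathbf{x}^{k+1}-\mathbf{x}^k$ produces only one good term, $-\frac12\|\mathbf{w}^{k+1}\|^2_{\mathbf{B}^k}$. It also leaves the cross term $\langle(\mathbf{B}^k-\mathbf{B}^{k-1})(\mathbf{x}^k-\mathbf{x}^{k-1}),\mathbf{x}^{k+1}-\mathbf{x}^k\rangle$, whose matrix is indefinite. Nowhere in your derivation is there a $\|\mathbf{w}^{k+1}\|^2_{\mathbf{B}^{k-1}}$ term to absorb it into. The two ways of treating it both fail:
\begin{enumerate}
\item A Young split creates positive terms in $\|\mathbf{x}^{k+1}-\mathbf{x}^k\|^2$ and $\|\mathbf{x}^k-\mathbf{x}^{k-1}\|^2$, with weights governed by $\|\mathbf{B}^k-\mathbf{B}^{k-1}\|$. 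Neither \eqref{Bk+rhoL-c/2Bk-1} nor the weights stored in $\tilde P^k$ cover these, and $\rho\mathbf{L}$ gives nothing on consensus directions. Also, $\tilde P$ is fixed by \eqref{tilde Pk+1}, so you cannot retune its weights as Step~3 suggests.
\item Polarizing the $\mathbf{B}^{k-1}$ piece exactly cancels your $-\frac12\|\mathbf{w}^{k+1}\|^2_{\mathbf{B}^k}$ and leaves only $-\frac12\|\mathbf{w}^{k+1}\|^2_{\mathbf{B}^{k-1}}$. That term cannot absorb the $3\tilde\kappa\|\mathbf{w}^{k+1}\|^2_{\mathbf{B}^k}$ coming from Lemma~\ref{lemma vk+1 - vk}. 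It also puts weight $\mathbf{B}^{k-1}$ instead of $\mathbf{B}^k$ on $\|\mathbf{x}^k-\mathbf{x}^{k-1}\|^2$, and the hypotheses do not control the mismatch.
\end{enumerate}
Even with constant $\mathbf{B}^k\equiv\mathbf{B}$, your bookkeeping gives a $\mathbf{w}$-coefficient of only $\frac{\tilde c}{2}-3\tilde\kappa$. That is strictly smaller than the claimed $\frac{\tilde c(4-\tilde c)}{6}$ whenever $\tilde c<1$.

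Step~2 is also inconsistent with Step~1. With your test vector, $\rho\mathbf{L}\mathbf{x}^{k+1}$ telescopes exactly, so there is no leftover $\rho\|\mathbf{x}^{k+1}\|^2_{\mathbf{L}}$ for a ``second application'' of Lemma~\ref{lemma vk+1 - vk}. The coefficient $\frac{\tilde c(2+\tilde c)}{6}$ comes instead from the two $(\mathbf{B}^{k+1})^{-1}$-weighted pieces of $\tilde P^{k+1}$, because $(1+\frac{\tilde c}{2})3\tilde\kappa\le\frac{\tilde c(2+\tilde c)}{6}$.

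The missing idea is to test against $\mathbf{w}^{k+1}$ instead. Write $\Delta^{k}:=\mathbf{x}^{k}-\mathbf{x}^{k-1}$, so $\mathbf{w}^{k+1}=\Delta^{k+1}-\Delta^k$. Pair the corrected identity $\nabla\tilde f(\mathbf{x}^k)-\nabla\tilde f(\mathbf{x}^{k-1})+\rho\mathbf{L}\mathbf{x}^{k+1}+\mathbf{B}^k\Delta^{k+1}-\mathbf{B}^{k-1}\Delta^k=0$ with $\mathbf{w}^{k+1}$. The two $\mathbf{B}$-pieces then polarize separately and no cross term appears:
\begin{equation*}
-\langle\mathbf{B}^k\Delta^{k+1}-\mathbf{B}^{k-1}\Delta^k,\mathbf{w}^{k+1}\rangle=-\tfrac12\|\Delta^{k+1}\|^2_{\mathbf{B}^k-\mathbf{B}^{k-1}}+\tfrac12\|\Delta^k\|^2_{\mathbf{B}^k-\mathbf{B}^{k-1}}-\tfrac12\|\mathbf{w}^{k+1}\|^2_{\mathbf{B}^k+\mathbf{B}^{k-1}}.
\end{equation*}
This identity is the real source of the $\mathbf{B}^{k-1}$-weight on $\mathbf{w}^{k+1}$ and of the combination $\mathbf{B}^{k+1}-\mathbf{B}^k+\mathbf{B}^{k-1}$. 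After dropping $-\frac12\|\Delta^k\|^2_{\mathbf{B}^{k-1}}$, the $\Delta^k$-weight is $\mathbf{B}^k$, which matches $\tilde P^k$.

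The cost is a dual cross term, $\rho\langle\mathbf{L}\mathbf{x}^{k+1},\Delta^k\rangle\le\frac{\rho}{2}\|\mathbf{x}^{k+1}\|^2_{\mathbf{L}}+\frac{\rho}{2}\|\Delta^k\|^2_{\mathbf{L}}$. Bounding that $\frac{\rho}{2}\|\mathbf{x}^{k+1}\|^2_{\mathbf{L}}$ by half of \eqref{vk+1 - vk} is the genuine second use of Lemma~\ref{lemma vk+1 - vk}. It produces exactly the stored weights $\rho\mathbf{L}+3\tilde\kappa(c_{\mathbf{B}}+\bar M^2)(\mathbf{B}^k)^{-1}$ on $\Delta^k$ in $\tilde P^k$, plus a term $\frac{3\tilde\kappa}{2}\|\mathbf{w}^{k+1}\|^2_{\mathbf{B}^k}$. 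The gradient term contributes $\frac{\bar M}{2}\|\Delta^{k+1}\|^2+\frac{3\bar M}{2}\|\Delta^k\|^2$.

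Now multiply this inequality by $\tilde c$ and add Lemma~\ref{lemma ALk+1 - ALk}, bounding its $\rho\|\mathbf{x}^{k+1}\|^2_{\mathbf{L}}$ by Lemma~\ref{lemma vk+1 - vk}. Then:
\begin{enumerate}
\item All $\Delta^k$-terms cancel exactly.
\item The $\mathbf{w}$-terms total $(3\tilde\kappa+\frac{3\tilde c\tilde\kappa}{2})\|\mathbf{w}^{k+1}\|^2_{\mathbf{B}^k}-\frac{\tilde c}{2}\|\mathbf{w}^{k+1}\|^2_{\mathbf{B}^k+\mathbf{B}^{k-1}}$. Using $3\tilde\kappa\le\frac{\tilde c}{3}$, this becomes the claimed $-\frac{\tilde c}{2}\|\mathbf{w}^{k+1}\|^2_{\frac{1-\tilde c}{3}\mathbf{B}^k+\mathbf{B}^{k-1}}$.
\item The $\Delta^{k+1}$-weight is closed by \eqref{Bk+rhoL-c/2Bk-1}, as in your Step~3.
\end{enumerate}
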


\begin{proof}
	See Appendix~\ref{proof lemma tildePk+1 nonincreasing}.
\end{proof}

Next, we show the boundedness of $\tilde{P}^{k+1}$.
\begin{lemma}\label{lemma tildePk+1 geq f*}
	Suppose all the conditions in Lemma~\ref{lemma vk+1 - vk} hold. Let $\{\mathbf{x}^k\}$ be the sequence generated by UPP-SC with the parameters selected by \eqref{define tilde c} and \eqref{Bk+rhoL-c/2Bk-1}. Also let $\mathbf{x}^{-1}=\mathbf{v}^{-1}=0$, and $\mathbf{G}^{-1}=\mathbf{G}^0$. Then,
	\begin{equation*}
		\tilde{P}^{k+1}\geq f^*, \forall k >0, \quad \tilde{P}^{0}\leq \tilde{f}(\mathbf{x}^0)+\frac{6\tilde{c}+7}{8\bar{M}}\|\nabla \tilde{f}(0)\|^2,
	\end{equation*}
	where $f^*$ is defined in Assumption~\ref{assumption: optimality}.
\end{lemma}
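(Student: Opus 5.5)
Both claims follow from the definition \eqref{tilde Pk+1} of $\tilde P^{k+1}$ and the form \eqref{AL} of the AL function. All the weighted-norm terms in $\tilde P^{k+1}$ other than $\operatorname{AL}^{k+1}$ are nonnegative, since $\mathbf{B}^{k+1}\succ\mathbf{O}$, $\mathbf{L}\succeq\mathbf{O}$ and $\tilde c>0$. So both the lower and the upper bound reduce to controlling $\operatorname{AL}$ together with a few extra squared terms.

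\emph{Lower bound.} I would first eliminate the multiplier. From \eqref{first-order opt modified} with $\mathbf{v}^{k+1}=\mathbf{v}^k+\rho\mathbf{L}^{\frac12}\mathbf{x}^{k+1}$ we get
\[
\mathbf{L}^{\frac12}\mathbf{v}^{k+1}=-\nabla\tilde f(\mathbf{x}^k)-\mathbf{B}^k(\mathbf{x}^{k+1}-\mathbf{x}^k).
\]
Write $\bar{\mathbf{x}}^{k+1}=\mathbf{J}\mathbf{x}^{k+1}$, so that $\mathbf{L}^{\frac12}\bar{\mathbf{x}}^{k+1}=0$. Then
\[
(\mathbf{v}^{k+1})^{\mathsf T}\mathbf{L}^{\frac12}\mathbf{x}^{k+1}=(\mathbf{v}^{k+1})^{\mathsf T}\mathbf{L}^{\frac12}(\mathbf{x}^{k+1}-\bar{\mathbf{x}}^{k+1})=-\langle \nabla\tilde f(\mathbf{x}^k)+\mathbf{B}^k(\mathbf{x}^{k+1}-\mathbf{x}^k),\,\mathbf{x}^{k+1}-\bar{\mathbf{x}}^{k+1}\rangle .
\]
Next, apply the descent lemma from Assumption~\ref{assumption smooth} in the form
\[
\tilde f(\bar{\mathbf{x}}^{k+1})\le \tilde f(\mathbf{x}^{k+1})+\langle\nabla\tilde f(\mathbf{x}^{k+1}),\bar{\mathbf{x}}^{k+1}-\mathbf{x}^{k+1}\rangle+\tfrac{\bar M}{2}\|\mathbf{x}^{k+1}-\bar{\mathbf{x}}^{k+1}\|^2 .
\]
Combining the two, $\operatorname{AL}^{k+1}$ is bounded below by
\[
\tilde f(\bar{\mathbf{x}}^{k+1})\ \text{minus}\ \langle \nabla\tilde f(\mathbf{x}^{k+1})-\nabla\tilde f(\mathbf{x}^k)-\mathbf{B}^k(\mathbf{x}^{k+1}-\mathbf{x}^k),\,\mathbf{x}^{k+1}-\bar{\mathbf{x}}^{k+1}\rangle ,
\]
minus $\frac{\bar M}{2}\|\mathbf{x}^{k+1}-\bar{\mathbf{x}}^{k+1}\|^2$, plus $\frac{\rho}{2}\|\mathbf{x}^{k+1}\|_{\mathbf{L}}^2$.

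I would handle the cross term with Young's inequality, Lipschitzness of $\nabla\tilde f$, and $\|\mathbf{x}-\bar{\mathbf{x}}\|^2\le\|\mathbf{x}\|^2_{\mathbf{L}}/\lambda_{N-1}^{\mathbf{L}}$. This bounds it by a combination of $\|\mathbf{x}^{k+1}-\mathbf{x}^k\|^2_{(\bar M^2+(\lambda_1^{\mathbf{B}})^2)(\mathbf{B}^{k+1})^{-1}\cdot(\cdot)}$ and $\|\mathbf{x}^{k+1}\|^2_{\mathbf{L}}$. The positive terms available in $\tilde P^{k+1}$ are the $3\tilde\kappa(c_{\mathbf B}+\bar M^2)(\mathbf{B}^{k+1})^{-1}$ block, the $\frac{\tilde c}{2}(\mathbf{B}^{k+1}+3\bar M\mathbf{I})$ block, and $\frac{\rho}{2}(1+\tilde c)\|\mathbf{x}^{k+1}\|^2_{\mathbf{L}}$; these must absorb it.

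The main obstacle is this bookkeeping. Young's weights have to be chosen so that the constants match $c_{\mathbf{B}}=4(\lambda_1^{\mathbf B})^2$ and use $\rho\lambda_{N-1}^{\mathbf L}\ge 9\lambda_1^{\mathbf B}$ from \eqref{define tilde c}. Once that is done, $\tilde P^{k+1}\ge \tilde f(\bar{\mathbf{x}}^{k+1})=f(\bar x^{k+1})\ge f^*$ by Assumption~\ref{assumption: optimality}. If the direct absorption fails, I would instead use Lemma~\ref{lemma vk+1 - vk} to control $\rho\|\mathbf{x}^{k+1}\|^2_{\mathbf L}$.

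\emph{Upper bound on $\tilde P^0$.} Use the initialization $\mathbf{x}^{-1}=\mathbf{v}^{-1}=0$, $\mathbf{G}^{-1}=\mathbf{G}^0$, so that $\mathbf{x}^0=-\mathbf{G}^0\nabla\tilde f(0)$ and $\mathbf{v}^0=\rho\mathbf{L}^{\frac12}\mathbf{x}^0$. Then
\[
\operatorname{AL}^0=\tilde f(\mathbf{x}^0)+\tfrac{3\rho}{2}\|\mathbf{x}^0\|^2_{\mathbf{L}},
\]
and every remaining term of $\tilde P^0$ is a quadratic form in $\mathbf{x}^0-\mathbf{x}^{-1}=\mathbf{x}^0$. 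Since $\mathbf{G}^0=(\mathbf{B}^0+\rho\mathbf{L})^{-1}$, each such term is bounded by
\[
\|\nabla\tilde f(0)\|^2_{\mathbf{G}^0 \mathbf{M}\mathbf{G}^0}
\]
for the relevant weight $\mathbf{M}$. Apply \eqref{Bk+rhoL-c/2Bk-1}, which gives $\mathbf{B}^0+\rho\mathbf{L}\succeq(2+8\tilde c)\bar M\mathbf{I}$ together with domination of the $(\mathbf{B})^{-1}$ terms, and use $\tilde c\le1$. Each contribution then reduces to a multiple of $\|\nabla\tilde f(0)\|^2/\bar M$. Summing the coefficients should give $\frac{6\tilde c+7}{8\bar M}$.
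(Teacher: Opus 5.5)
Your proposal is correct. The bound on $\tilde P^0$ follows the paper, but your lower bound on $\tilde P^{k+1}$ takes a genuinely different route.

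\textbf{Lower bound.} The paper proceeds in three steps:
\begin{enumerate}
\item It writes $\langle\mathbf v^{k+1},\mathbf L^{\frac12}\mathbf x^{k+1}\rangle=\frac{1}{2\rho}(\|\mathbf v^{k+1}\|^2-\|\mathbf v^k\|^2+\|\mathbf v^{k+1}-\mathbf v^k\|^2)$.
\item It sums from $k=-1$, so that with $\mathbf v^{-1}=0$ the dual part telescopes, and deduces that every partial sum of $\tilde P^{k+1}-f^*$ is nonnegative.
\item It invokes the monotonicity of Lemma~\ref{lemma tildePk+1 nonincreasing}: if one term were negative, all later ones would be, and the partial sums would diverge to $-\infty$.
\end{enumerate}

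You instead bound each $\tilde P^{k+1}$ individually. You use $\mathbf L^{\frac12}\mathbf v^{k+1}=-\nabla\tilde f(\mathbf x^k)-\mathbf B^k(\mathbf x^{k+1}-\mathbf x^k)$ to cancel the linear term of the descent lemma taken at $\bar{\mathbf x}^{k+1}$.

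The bookkeeping you flagged does close, as follows.
\begin{itemize}
\item The cross term actually enters with a plus sign, which is immaterial once you bound its absolute value.
\item With Young weight $\rho\lambda_{N-1}^{\mathbf L}/3$, the cross term is at most $\frac{3(\bar M+\lambda_1^{\mathbf B})^2}{2\rho\lambda_{N-1}^{\mathbf L}}\|\mathbf x^{k+1}-\mathbf x^k\|^2+\frac{\rho}{6}\|\mathbf x^{k+1}\|^2_{\mathbf L}$.
\item The first piece is at most $\frac{3(c_{\mathbf B}+\bar M^2)}{\rho\lambda_{N-1}^{\mathbf L}}\|\mathbf x^{k+1}-\mathbf x^k\|^2$. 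This is dominated by the $3\tilde\kappa(c_{\mathbf B}+\bar M^2)(\mathbf B^{k+1})^{-1}$ block, since $(\mathbf B^{k+1})^{-1}\succeq\mathbf I/\lambda_1^{\mathbf B}$ and $\tilde\kappa=\lambda_1^{\mathbf B}/(\rho\lambda_{N-1}^{\mathbf L})$.
\item What remains is the requirement $\frac{\rho}{3}\ge\frac{\bar M}{2\lambda_{N-1}^{\mathbf L}}$.
\item This follows from $\rho\lambda_{N-1}^{\mathbf L}\ge 9\lambda_1^{\mathbf B}$, which is \eqref{define tilde c} with $\tilde c\le 1$.
\item It also uses $\lambda_1^{\mathbf B}\ge 2\bar M$. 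To get this, test \eqref{Bk+rhoL-c/2Bk-1} on a consensual unit vector, where $\mathbf L$ vanishes; this gives $\bigl(\frac14+\frac{\tilde c}{2}\bigr)\lambda_1^{\mathbf B^k}\ge\bigl(\frac12+2\tilde c\bigr)\bar M$.
\end{itemize}

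Your route gives the sharper per-iterate bound $\tilde P^{k+1}\ge f(\bar x^{k+1})$ for every $k\ge 0$, and it does not need Lemma~\ref{lemma tildePk+1 nonincreasing}. It also uses only $f\ge f^*$ at consensual points. The paper's telescoping step instead asserts $\tilde f(\mathbf x)-f^*\ge 0$ for arbitrary $\mathbf x$. Assumption~\ref{assumption: optimality} does not provide this off the consensus subspace, so your argument avoids an unjustified step. The paper's route, in exchange, needs no constant-chasing and reuses the descent property it has already proved.

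\textbf{Upper bound on $\tilde P^0$.} This part is essentially the paper's argument:
\begin{itemize}
\item $\operatorname{AL}^0=\tilde f(\mathbf x^0)+\frac{3\rho}{2}\|\mathbf x^0\|^2_{\mathbf L}$.
\item The weight on $\|\mathbf x^0\|^2$ is dominated by a multiple of $(\mathbf G^0)^{-1}$, using \eqref{define tilde c} and \eqref{Bk+rhoL-c/2Bk-1} with $\mathbf G^{-1}=\mathbf G^0$.
\item Then $\mathbf G^0\preceq\mathbf I/((2+8\tilde c)\bar M)$.
\end{itemize}
Your grouping yields a coefficient no larger than $\frac{6\tilde c+7}{8\bar M}$.

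One caveat, shared with the paper: dominating the $(\mathbf B^0)^{-1}$ term requires reading \eqref{Bk+rhoL-c/2Bk-1} at $k=-1$. At $k=0$ that condition only controls $(\mathbf B^1)^{-1}$.
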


\begin{proof}
	See Appendix~\ref{proof lemma tildePk+1 geq f*}.
\end{proof}

As the sequence $\{\tilde{P}^{k}\}$ is nonincreasing and bounded below, we deduce the convergence result of UPP-SC.
\begin{theorem}\label{theorem UPP-SC sublinear time-varying Bk}
	Suppose all the conditions in Lemma~\ref{lemma tildePk+1 geq f*} hold. Let $\{\mathbf{x}^k\}$ be the sequence generated by UPP-SC with the parameters selected as follows: $\xi_5=\frac{4\kappa_{\mathbf{B}}^2}{3}+\frac{2\kappa_{\mathbf{B}}-1}{2}$, $\xi_6= \frac{2\kappa_{\mathbf{B}}^2}{3}$, $0<\tilde{c}<\frac{-\xi_5+\sqrt{\xi_5^2+\xi_6}}{2\xi_6}$, $d_1= \frac{1}{4}- \xi_5 \tilde{c}-\xi_6 \tilde{c}^2$, $d_2 = (\frac{1}{2}+2\tilde{c})\bar{M}$, $d_3=\frac{(2+\tilde{c})\tilde{c}}{6}\bar{M}^2$, $\lambda_1^{\mathbf{B}}\geq\lambda_N^{\mathbf{B}}\geq \frac{d_2+\sqrt{d_2^2+4d_1d_3}}{2d_1}$ and $\rho \geq \frac{9\kappa_{\mathbf{B}}\lambda_N^{\mathbf{B}}}{\tilde{c}\lambda_{N-1}^{\mathbf{L}}}$.
	Let $T_k$ denote the iteration index in which UPP-SC satisfies
	\begin{equation}
		e(T_k):= \min_{k\in[T_k]}\frac{1}{N}\|\sum_{i=1}^N \nabla {f}_i(x_i^k)\|^2+\rho\|\mathbf{x}^{k}\|^2_\mathbf{L}\leq \epsilon, \label{eT time-varying Bk}
	\end{equation}
	where the error $\epsilon>0$. Then, the error above has the following bound:
	\begin{equation}
		e(T_k) \leq \tilde{C}_1\times {\tilde{C}_2}/{T_k}, \label{epsilon leq tildeC1C2/T}
	\end{equation}
	where $\tilde{C}_1:= \tilde{f}(\mathbf{x}^0)-f^*+\frac{6\tilde{c}+7}{8\bar{M}}\|\nabla \tilde{f}(0)\|^2$ and $\tilde{C}_2:= 4{\lambda}_1^{\mathbf{B}}+\frac{4}{3(1-\tilde{c})}(1+\frac{18d_1}{\tilde{c}(\tilde{c}+2)})+8(\frac{6d_1}{\tilde{c}(\tilde{c}+2)}+1).$
\end{theorem}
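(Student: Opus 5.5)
The plan is the usual telescoping argument built on Lemmas~\ref{lemma tildePk+1 nonincreasing} and~\ref{lemma tildePk+1 geq f*}, followed by bounding the stationarity measure in \eqref{eT time-varying Bk} by the descent quantities.

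\textbf{Step 1: admissibility of the parameters.} First I would verify that the stated choices imply \eqref{define tilde c} and \eqref{Bk+rhoL-c/2Bk-1}.

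For \eqref{define tilde c}, note that $\rho\ge 9\kappa_{\mathbf{B}}\lambda_N^{\mathbf{B}}/(\tilde c\lambda_{N-1}^{\mathbf{L}}) = 9\lambda_1^{\mathbf{B}}/(\tilde c\lambda_{N-1}^{\mathbf{L}})$. This gives $9\tilde\kappa\le\tilde c$ directly.

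For \eqref{Bk+rhoL-c/2Bk-1}, I would bound each term by multiples of $\lambda_N^{\mathbf{B}}\mathbf{I}$:
\begin{itemize}
\item drop $\rho\mathbf{L}\succeq\mathbf{O}$;
\item use $\mathbf{B}^{k+1}-\mathbf{B}^k+\mathbf{B}^{k-1}\preceq 2\lambda_1^{\mathbf{B}}\mathbf{I}$;
\item use $(\mathbf{B}^{k+1})^{-1}\preceq \mathbf{I}/\lambda_N^{\mathbf{B}}$;
\item use $c_{\mathbf{B}}/\lambda_N^{\mathbf{B}} = 4\kappa_{\mathbf{B}}^2\lambda_N^{\mathbf{B}}$.
\end{itemize}
Collecting the $\lambda_N^{\mathbf{B}}$-linear terms should produce the coefficient $d_1=\frac14-\xi_5\tilde c-\xi_6\tilde c^2$. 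Positivity of $d_1$ is exactly the stated bound on $\tilde c$. The condition then reduces to the scalar quadratic inequality
\[
d_1(\lambda_N^{\mathbf{B}})^2-d_2\lambda_N^{\mathbf{B}}-d_3\ge0,
\]
which holds by the lower bound on $\lambda_N^{\mathbf{B}}$. Matching the constants $\xi_5,\xi_6$ exactly is routine bookkeeping.

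\textbf{Step 2: telescoping.} Summing \eqref{tildePk+1 - tildePk lemma} over $k=0,\dots,T_k-1$ and using Lemma~\ref{lemma tildePk+1 geq f*} gives
\[
\sum_{k}\Big(\|\mathbf{x}^{k+1}-\mathbf{x}^k\|^2_{\frac14(\mathbf{B}^k+\rho\mathbf{L})}+\frac{\tilde c}{2}\|\mathbf{w}^{k+1}\|^2_{\frac{1-\tilde c}{3}\mathbf{B}^k+\mathbf{B}^{k-1}}\Big)\le \tilde P^0-f^*\le\tilde C_1.
\]

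\textbf{Step 3: bounding $e(T_k)$ by the dissipated terms (main obstacle).} I would bound each of the two parts of $e(T_k)$ by combinations of $\|\mathbf{x}^{k+1}-\mathbf{x}^k\|^2$, $\|\mathbf{x}^k-\mathbf{x}^{k-1}\|^2$ and $\|\mathbf{w}^{k+1}\|^2$ with constants that sum to $\tilde C_2$.

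\emph{Consensus part.} Lemma~\ref{lemma vk+1 - vk} gives $\rho\|\mathbf{x}^{k+1}\|^2_\mathbf{L}\le 3\tilde\kappa\big((c_{\mathbf{B}}+\bar M^2)\|\mathbf{x}^k-\mathbf{x}^{k-1}\|^2_{(\mathbf{B}^k)^{-1}}+\|\mathbf{w}^{k+1}\|^2_{\mathbf{B}^k}\big)$. Using $3\tilde\kappa\le\tilde c/3$, and rewriting $c_{\mathbf{B}}+\bar M^2$ relative to $d_1,\tilde c$ via the Step~1 inequality, produces factors of the form $\frac{6d_1}{\tilde c(\tilde c+2)}$. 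The $\mathbf{w}$ term is absorbed using the weight $\frac{\tilde c(1-\tilde c)}{6}\mathbf{B}^k$, which explains the $\frac{4}{3(1-\tilde c)}$ factor.

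\emph{Gradient part.} I would average the optimality condition \eqref{first-order opt modified} over nodes. Since $\mathbf{J}\mathbf{L}=\mathbf{O}$ and $\mathbf{J}\mathbf{L}^{1/2}=\mathbf{O}$, this gives $\mathbf{J}\nabla\tilde f(\mathbf{x}^k)=-\mathbf{J}\mathbf{B}^k(\mathbf{x}^{k+1}-\mathbf{x}^k)$. Hence $\frac1N\|\sum_i\nabla f_i(x_i^k)\|^2\le\|\mathbf{B}^k(\mathbf{x}^{k+1}-\mathbf{x}^k)\|^2\le\lambda_1^{\mathbf{B}}\|\mathbf{x}^{k+1}-\mathbf{x}^k\|^2_{\mathbf{B}^k}$, which is bounded by $4\lambda_1^{\mathbf{B}}$ times the $\frac14\mathbf{B}^k$-weighted descent term. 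This is the origin of the $4\lambda_1^{\mathbf{B}}$ piece of $\tilde C_2$.

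\textbf{Step 4: conclusion.} Summing these bounds and relating the resulting sum to the total dissipation of Step~2 (with an index shift for $\mathbf{x}^{k+1}$ versus $\mathbf{x}^k$) gives $\sum_{k\le T_k}\big(\cdots\big)\le \tilde C_1\tilde C_2$. Since the minimum is at most the average, $e(T_k)\le\tilde C_1\tilde C_2/T_k$.

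I expect Step~3 to be the hardest part. The delicate issue is the relative normalization: the descent weight $\frac14\mathbf{B}^k$ versus the $(\mathbf{B}^k)^{-1}$ weights, and the use of $\lambda_N^{\mathbf{B}}\ge d_2/d_1$ to convert inverse weights into the stated constants. If an exact match fails, I would still obtain an $\mathcal O(1/T_k)$ bound with a comparable constant.
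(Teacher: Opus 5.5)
Your proposal follows the paper's proof essentially step for step. Part (i) of the appendix proof gets \eqref{define tilde c} from the lower bound on $\rho$ and reduces \eqref{Bk+rhoL-c/2Bk-1} to $d_1\lambda_N^{\mathbf{B}}-d_2-d_3/\lambda_N^{\mathbf{B}}>0$. Part (ii) bounds the gradient term by applying $\mathbf{J}$ to \eqref{first-order opt modified}, which gives the $4\lambda_1^{\mathbf{B}}$ piece exactly as you describe. It bounds the consensus term through Lemma~\ref{lemma vk+1 - vk} plus a conversion of $(\mathbf{B}^k)^{-1}$-weights into $\mathbf{B}^k$-weights, and then telescopes using Lemmas~\ref{lemma tildePk+1 nonincreasing} and~\ref{lemma tildePk+1 geq f*}.

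Some details of your sketch would not give the stated constants as written.

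(1) To obtain exactly $d_1$ you must use $\mathbf{B}^{k+1}-\mathbf{B}^k+\mathbf{B}^{k-1}\preceq(2\lambda_1^{\mathbf{B}}-\lambda_N^{\mathbf{B}})\mathbf{I}$, i.e.\ keep $-\mathbf{B}^k\preceq-\lambda_N^{\mathbf{B}}\mathbf{I}$. That is where the $-\frac{1}{2}$ in $\xi_5$ comes from. Your cruder bound $2\lambda_1^{\mathbf{B}}\mathbf{I}$ only yields the coefficient $d_1-\tilde{c}/2$, and for that coefficient the assumed lower bound on $\lambda_N^{\mathbf{B}}$ does not suffice.

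(2) The paper does not shift indices. It uses $\rho\|\mathbf{x}^k\|^2_{\mathbf{L}}\le 2\rho\|\mathbf{x}^{k+1}\|^2_{\mathbf{L}}+2\rho\|\mathbf{x}^{k+1}-\mathbf{x}^k\|^2_{\mathbf{L}}$, so every summand of $e$ at index $k$ is charged to $\tilde{P}^k-\tilde{P}^{k+1}$. The resulting $2\rho\mathbf{L}$ weight is what produces the additive $1$ in $8(\frac{6d_1}{\tilde{c}(\tilde{c}+2)}+1)$. A shift over $k=0,\dots,T_k-1$ instead leaves the boundary term $\rho\|\mathbf{x}^0\|^2_{\mathbf{L}}$, which Lemma~\ref{lemma vk+1 - vk} does not cover, since it only bounds $\mathbf{x}^{k+1}$ for $k\ge 0$. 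You can control that term by $\|\nabla\tilde{f}(0)\|^2_{\mathbf{G}^0}$ via $\mathbf{x}^0=-\mathbf{G}^0\nabla\tilde{f}(0)$, but only at the cost of an extra additive term in the bound.

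(3) On the conversion you flagged as delicate, the paper asserts $\mathbf{B}^k\succeq\frac{(c_{\mathbf{B}}+\bar{M}^2)\tilde{c}(\tilde{c}+2)}{6d_1}(\mathbf{B}^k)^{-1}$ in \eqref{Bk geq d3/d1}, deriving it from $d_1\lambda_N^{\mathbf{B}}>d_3/\lambda_N^{\mathbf{B}}$.
\begin{itemize}
\item That derivation does not work, because $d_3$ contains only $\bar{M}^2$ and not $c_{\mathbf{B}}=4\kappa_{\mathbf{B}}^2(\lambda_N^{\mathbf{B}})^2$. The same objection applies to your suggested $\lambda_N^{\mathbf{B}}\ge d_2/d_1$.
\item At the eigenvalue $\lambda_N^{\mathbf{B}}$ the inequality needs $6d_1>4\kappa_{\mathbf{B}}^2\tilde{c}(\tilde{c}+2)$. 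This fails for $\tilde{c}$ near its upper limit, where $d_1\to 0$.
\item What the unreduced scalar condition does give, since all its other subtracted terms are nonnegative, is $\frac{1}{4}(\lambda_N^{\mathbf{B}})^2\ge\frac{\tilde{c}(2+\tilde{c})}{6}(c_{\mathbf{B}}+\bar{M}^2)$. That is the same conversion with $d_1$ replaced by $\frac{1}{4}$.
\end{itemize}

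Similarly, combine $6\tilde{\kappa}\le\frac{2\tilde{c}}{3}$ with the dissipation weight $\frac{\tilde{c}(1-\tilde{c})}{6}\mathbf{B}^k$. The $\mathbf{w}$-absorption then gives the factor $\frac{4}{1-\tilde{c}}$, not $\frac{4}{3(1-\tilde{c})}$.

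None of this affects the $\mathcal{O}(1/T_k)$ rate. With these repairs your plan proves the theorem up to the numerical value of $\tilde{C}_2$.
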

\begin{proof}
    See Appendix~\ref{proof theorem UPP-SC sublinear time-varying Bk}.
\end{proof}

Based on the parameter selections in Theorem~\ref{theorem UPP-SC sublinear time-varying Bk}, we set the parameters $\tilde{c}=\mathcal{O}(1)$. These imply that UPP-SC converges at a rate of $\mathcal{O}(\lambda_1^\bB/T)$ to stationarity and reaches an $\epsilon$-stationary solution (where $\epsilon>0$) within $\mathcal{O}(\lambda_1^\bB\tau/\epsilon)$ communication rounds.

\subsection{Optimal communication complexity bound}\label{convergence of UPP-SC-OPT}
In this subsection, we analyze the optimal communication complexity for UPP-SC-OPT, which is a special case of UPP-SC (with $\mathbf{G}^k=\mu \mathbf{I}_{Nd}$ for $k\geq 0$). Based on Theorem~\ref{theorem UPP-SC sublinear time-varying Bk}, we provide the following convergence result for UPP-SC-OPT.

\begin{theorem}\label{theorem UPP-SC-OPT}
	Suppose all the conditions in Lemma~\ref{lemma tildePk+1 geq f*} hold. Let $\{\bx^k\}$ be the sequence generated by UPP-SC-OPT with the parameters selected as follows: $0<\tilde{c}<\frac{1}{2}$, $d_4= \frac{1}{4}- \frac{\tilde{c}}{2}$, $\lambda_N^{\mathbf{B}}\geq \frac{d_2+\sqrt{d_2^2+4d_4d_3}}{2d_4}$, $\lambda_1^{\mathbf{B}}=\frac{1}{\mu}>\frac{9\lambda_N^\bB}{\tilde{c}}\Big((2\kappa_\bL-1)+\sqrt{(2\kappa_\bL-1)^2-1}\Big)$ and $\rho \geq \frac{9\kappa_{\mathbf{B}}\lambda_N^{\mathbf{B}}}{\tilde{c}\lambda_{N-1}^{\mathbf{L}}}$. 
	Then, the optimality error defined in \eqref{eT time-varying Bk} has the following bound:
	\vspace{-0.2cm}
	\begin{equation}
		e(T_k) \leq \hat{C}_1\times {\hat{C}_2}/{T_k}, \label{epsilon leq C1C2/T}
	\end{equation}
	where $\hat{C}_1:= \tilde{f}(\mathbf{x}^0)-f^*+\frac{6\tilde{c}+7}{8\bar{M}}\|\nabla \tilde{f}(0)\|^2$ and $\hat{C}_2:= 4{\lambda}_1^{\mathbf{B}}+\frac{4}{3(1-\tilde{c})}(1+\frac{18d_4}{\tilde{c}(\tilde{c}+2)})+8(\frac{6d_4}{\tilde{c}(\tilde{c}+2)}+1).$
\end{theorem}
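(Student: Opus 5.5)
The plan is to deduce Theorem~\ref{theorem UPP-SC-OPT} from the machinery behind Theorem~\ref{theorem UPP-SC sublinear time-varying Bk}. The argument rests on Lemmas~\ref{lemma tildePk+1 nonincreasing} and~\ref{lemma tildePk+1 geq f*}. We specialize to $\mathbf{G}^k=\mu\mathbf{I}_{Nd}$, so that $\mathbf{B}^k=\mathbf{B}:=\frac{1}{\mu}\mathbf{I}_{Nd}-\rho\mathbf{L}$ is time-invariant. We then re-verify conditions \eqref{define tilde c} and \eqref{Bk+rhoL-c/2Bk-1} under the new, sharper parameter choices. Throughout, $\lambda_1^{\mathbf{B}}$ and $\lambda_N^{\mathbf{B}}$ are the extreme eigenvalues of this single matrix.

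\textbf{Step 1: eigenvalues and positivity of $\mathbf{B}$.} Since $\mathbf{B}$ is a polynomial in $\mathbf{L}$, its eigenvalues are $\frac{1}{\mu}-\rho\lambda_i^{\mathbf{L}}$. In particular $\lambda_1^{\mathbf{B}}=\frac1\mu$, attained on $\mathcal{S}$ where $\mathbf{L}$ vanishes, and $\lambda_N^{\mathbf{B}}=\frac1\mu-\rho\lambda_1^{\mathbf{L}}$. Two requirements must hold simultaneously:
\begin{equation*}
\lambda_N^{\mathbf{B}}>0, \qquad \rho\ge \frac{9\kappa_{\mathbf{B}}\lambda_N^{\mathbf{B}}}{\tilde c\lambda_{N-1}^{\mathbf{L}}}=\frac{9\lambda_1^{\mathbf{B}}}{\tilde c\lambda_{N-1}^{\mathbf{L}}}.
\end{equation*}
Write $\rho=\frac{1}{\mu}-\lambda_N^{\mathbf B}$ divided by $\lambda_1^{\mathbf L}$, i.e.\ $\rho\lambda_1^{\mathbf{L}}=\lambda_1^{\mathbf B}-\lambda_N^{\mathbf B}$, and set $r=\lambda_1^{\mathbf B}/\lambda_N^{\mathbf B}=\kappa_{\mathbf B}$. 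The compatibility condition $\rho\lambda_{N-1}^{\mathbf L}\ge 9\lambda_1^{\mathbf B}/\tilde c$ then becomes a quadratic-type inequality in $r$ involving $\kappa_{\mathbf L}$. I expect this inequality is exactly what the threshold
\begin{equation*}
\frac{1}{\mu}>\frac{9\lambda_N^{\mathbf B}}{\tilde c}\Big((2\kappa_{\mathbf L}-1)+\sqrt{(2\kappa_{\mathbf L}-1)^2-1}\Big)
\end{equation*}
encodes, and I will confirm this by solving the quadratic. This step also certifies $\tilde c\ge 9\tilde\kappa$ in \eqref{define tilde c}.

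\textbf{Step 2: verify \eqref{Bk+rhoL-c/2Bk-1} for constant $\mathbf{B}$.} With $\mathbf{B}^{k+1}=\mathbf{B}^k=\mathbf{B}^{k-1}=\mathbf{B}$, the term $\mathbf{B}^{k+1}-\mathbf{B}^k+\mathbf{B}^{k-1}$ collapses to $\mathbf{B}$. The left side of \eqref{Bk+rhoL-c/2Bk-1} is then the matrix function
\begin{equation*}
\Big(\tfrac14-\tfrac{\tilde c}{2}\Big)\mathbf{B}+\tfrac14\rho\mathbf{L}-d_2\mathbf{I}-\tfrac{\tilde c(2+\tilde c)}{6}(c_{\mathbf B}+\bar M^2)\mathbf{B}^{-1}.
\end{equation*}
Dropping $\tfrac14\rho\mathbf{L}\succeq\mathbf O$ leaves, eigenvalue-wise, the scalar requirement $d_4 b-d_2-d_3'/b\ge0$ for all $b\in[\lambda_N^{\mathbf B},\lambda_1^{\mathbf B}]$, where $d_4=\frac14-\frac{\tilde c}{2}$. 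This is where $d_4$ replaces $d_1$: the factors $\kappa_{\mathbf B}$ in $\xi_5,\xi_6$ came from the time variation of $\mathbf{B}^k$, which is now absent.

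The $c_{\mathbf B}=4(\lambda_1^{\mathbf B})^2$ piece is the delicate part. In the general proof it was absorbed using the $\kappa_{\mathbf B}^2$ terms in $\xi_5,\xi_6$. Here I plan to absorb it instead into the $\frac14\rho\mathbf{L}$ term and the slack given by $\tilde c$ small. Failing that, I will use the sharper Lemma~\ref{lemma vk+1 - vk} bound available for constant $\mathbf{B}$, where the $\mathbf{B}^k-\mathbf{B}^{k-1}$ contribution that generated $c_{\mathbf B}$ vanishes. What remains is $d_4b^2-d_2b-d_3\ge0$, which holds for $b\ge\lambda_N^{\mathbf B}\ge\frac{d_2+\sqrt{d_2^2+4d_4d_3}}{2d_4}$ since the quadratic is increasing beyond its positive root.

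\textbf{Step 3: telescope and conclude.} Lemma~\ref{lemma tildePk+1 nonincreasing} applies once Steps 1–2 hold. Sum \eqref{tildePk+1 - tildePk lemma} over $k=0,\dots,T_k-1$ and combine with the bounds $\tilde P^{k}\ge f^*$ and the bound on $\tilde P^0$ from Lemma~\ref{lemma tildePk+1 geq f*}. This gives $\sum_k\big(\|\mathbf{x}^{k+1}-\mathbf{x}^k\|^2_{\frac14(\mathbf B+\rho\mathbf L)}+\tfrac{\tilde c}{2}\|\mathbf w^{k+1}\|^2_{\cdots}\big)\le\hat C_1$.

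Next, bound the stationarity measure in \eqref{eT time-varying Bk} as in the proof of Theorem~\ref{theorem UPP-SC sublinear time-varying Bk}. For the gradient term, average \eqref{first-order opt modified} over nodes, which kills the $\mathbf{L}$ terms, and use $\bar M$-smoothness. For the consensus term, use Lemma~\ref{lemma vk+1 - vk}. Both are thereby controlled by the summed successive differences, with constants now involving $d_4$ in place of $d_1$; this yields $\hat C_2$. Taking the minimum over $k$ gives \eqref{epsilon leq C1C2/T}.

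The main obstacle is Step 1 together with the $c_{\mathbf B}$ absorption in Step 2. The conditions $\lambda_N^{\mathbf B}>0$ and $\rho\lambda_{N-1}^{\mathbf L}\gtrsim\lambda_1^{\mathbf B}/\tilde c$ pull in opposite directions. Because Chebyshev acceleration keeps $\kappa_{\mathbf L}=\mathcal{O}(1)$, the bound $\frac1\mu\gtrsim\lambda_N^{\mathbf B}=\mathcal{O}(\bar M)$ remains compatible with the required $\rho$, and this is what delivers the optimal $\mathcal{O}(\bar M\sqrt\gamma/\epsilon)$ complexity.
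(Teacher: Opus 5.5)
Your Steps 2 and 3, taken with your fallback option, are exactly the paper's proof. Since $\mathbf{B}^k\equiv\mathbf{B}=\frac{1}{\mu}\mathbf{I}_{Nd}-\rho\mathbf{L}$ (with $\mathbf{G}^{-1}=\mathbf{G}^0$), the term $(\mathbf{B}^k-\mathbf{B}^{k-1})(\mathbf{x}^k-\mathbf{x}^{k-1})$ in the proof of Lemma~\ref{lemma vk+1 - vk} vanishes, so that bound holds with $c_{\mathbf{B}}=0$. Then \eqref{Bk+rhoL-c/2Bk-1} reduces to $d_4\lambda_N^{\mathbf{B}}-d_2-d_3/\lambda_N^{\mathbf{B}}\ge 0$, and the telescoping argument of Theorem~\ref{theorem UPP-SC sublinear time-varying Bk} is rerun with $d_4$ in place of $d_1$.

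Commit to $c_{\mathbf{B}}=0$ from the outset. Your first option, absorbing $c_{\mathbf{B}}=4(\lambda_1^{\mathbf{B}})^2$ into $\frac14\rho\mathbf{L}$ and the slack in $\tilde c$, does not work. In the eigendirection of $\lambda_1^{\mathbf{L}}$ the favorable terms total at most $d_4\lambda_N^{\mathbf{B}}+\frac14\rho\lambda_1^{\mathbf{L}}<\frac12\lambda_1^{\mathbf{B}}$. The $c_{\mathbf{B}}$ term there is $\frac{2\tilde c(2+\tilde c)}{3}\kappa_{\mathbf{B}}\lambda_1^{\mathbf{B}}>12\lambda_1^{\mathbf{B}}$, because the hypothesis on $1/\mu$ gives $\tilde c\kappa_{\mathbf{B}}>9$.

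Step 1 is both unnecessary and impossible to complete as you intend.

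\textbf{Unnecessary.} $\tilde c\ge 9\tilde\kappa$ is the $\rho$-hypothesis verbatim, since $\kappa_{\mathbf{B}}\lambda_N^{\mathbf{B}}=\lambda_1^{\mathbf{B}}$, and $\lambda_N^{\mathbf{B}}>0$ is assumed.

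\textbf{Impossible.} The constraints are contradictory. From $\lambda_1^{\mathbf{B}}=1/\mu$ and $\lambda_N^{\mathbf{B}}=1/\mu-\rho\lambda_1^{\mathbf{L}}>0$ you get $\rho\lambda_{N-1}^{\mathbf{L}}\le\rho\lambda_1^{\mathbf{L}}=\lambda_1^{\mathbf{B}}-\lambda_N^{\mathbf{B}}<\lambda_1^{\mathbf{B}}$. The $\rho$-hypothesis, however, requires $\rho\lambda_{N-1}^{\mathbf{L}}\ge 9\lambda_1^{\mathbf{B}}/\tilde c>18\lambda_1^{\mathbf{B}}$. In your variables the condition is $(r-1)/\kappa_{\mathbf{L}}\ge 9r/\tilde c$. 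This is linear, not quadratic, and has no solution $r>0$; no threshold on $1/\mu$ changes that.

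So the parameter set in the theorem is empty. The implication you prove in Steps 2--3 is valid, but only vacuously. Your closing claim that these parameters ``remain compatible'' and deliver $\mathcal{O}(\bar M\sqrt{\gamma}/\epsilon)$ communication complexity is false. The paper's proof never checks this feasibility either, so this is a defect of the statement rather than of your Steps 2--3. Drop Step 1 rather than expecting it to validate the threshold on $1/\mu$.
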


\begin{proof}
	See Appendix~\ref{proof theorem UPP-SC-OPT}.
\end{proof}

Now we let $\tilde{c}=\frac{1}{4}$ and $\lambda_N^\bB=\mathcal{O}(\bar{M})$, and have $\lambda_1^\bB=\mathcal{O}(\bar{M}\kappa_\bL)$. Since $\mathbf{B}=\bG^{-1}-\rho\bL=(\frac{1}{\mu}\mathbf{I}_{Nd}-\rho \mathbf{L})$ implies that $\lambda_1^\bB=\frac{1}{\mu}$, UPP-SC-OPT reaches $\epsilon$-stationarity within $\mathcal{O}(\bar{M}\kappa_\bL/\epsilon)$ iterations.

Subsequently, we analyze the communication complexity bound of UPP-SC-OPT based on Chebyshev iterations.

\begin{theorem}\label{theorem optimal communication complexity bound}
	Suppose all the conditions in Theorem~\ref{theorem UPP-SC-OPT} hold. For $k\geq 0$, let $\tau^k=\tau=\lceil {\sqrt{\gamma}} \rceil$, $\mathbf{L}=P_{\tau}(\mathbf{H})/\lambda_1^{P_{\tau}(\mathbf{H})}$, employ the Chebyshev polynomial to $P_{\tau}(\mathbf{H})$ as is shown in Oracle~\ref{oracle cheb}, and choose the parameters as in Theorem~\ref{theorem UPP-SC-OPT}. Denote $T_c$ as the communication rounds needed to reach the $\epsilon$-stationary solution defined in \eqref{eT time-varying Bk}. Then, UPP-SC-OPT achieves the following communication complexity bound:
	\vspace{-0.2cm}
	\begin{equation}
		\label{Tc leq} T_c =\mathcal{O}({\bar{M}\sqrt{\gamma}}/{\epsilon}).
	\end{equation}
\end{theorem}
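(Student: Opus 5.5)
The plan is to combine the iteration complexity from Theorem~\ref{theorem UPP-SC-OPT} with a bound on the condition number $\kappa_\bL$ of the Chebyshev polynomial produced by Oracle~\ref{oracle cheb}. The total number of communication rounds equals the number of iterations times the rounds per iteration. By Algorithm~\ref{algorithm: UPP-SC}, each iteration needs only one call to $\operatorname{CACC}$: the term $\bL\bx^{k+1}$ is stored and reused in the next primal step. So each iteration costs $\tau=\lceil\sqrt{\gamma}\rceil$ rounds, initialization adds $\tau$ more, and
\[
T_c=\tau(T_k+1).
\]

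First, control $\kappa_\bL$. Since $\bL=P_\tau(\bH)/\lambda_1^{P_\tau(\bH)}$, rescaling does not change the condition number, so $\kappa_\bL=\kappa_{P}:=\lambda_1^{P_\tau(\bH)}/\lambda_{N-1}^{P_\tau(\bH)}$. The Oracle output is $P_\tau(\lambda)=1-T_\tau(c_1(1-c_2\lambda))/T_\tau(c_1)$, with $T_\tau$ the Chebyshev polynomial. The affine map $\lambda\mapsto c_1(1-c_2\lambda)$ sends the nonzero spectrum $[\lambda_{N-1}^\bH,\lambda_1^\bH]$ into $[-1,1]$; I will verify this from the definitions of $c_1,c_2$, reading them with $\gamma$ interpreted as the inverse condition number, as in the standard Chebyshev construction. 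On that interval $|T_\tau|\le1$, so every nonzero eigenvalue of $P_\tau(\bH)$ lies in $[1-1/T_\tau(c_1),\,1+1/T_\tau(c_1)]$, and $P_\tau(0)=0$ preserves the null space $\mathcal{S}$. Using $T_\tau(c_1)\ge\frac12\big(c_1+\sqrt{c_1^2-1}\big)^\tau$ and $c_1+\sqrt{c_1^2-1}\ge 1+2/\sqrt{\gamma}$ roughly, the choice $\tau=\lceil\sqrt\gamma\rceil$ gives $T_\tau(c_1)\ge \frac12(1+2/\sqrt\gamma)^{\sqrt\gamma}\ge c_0>1$ for a universal constant $c_0$ (about $e^2/2\cdot$const). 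Hence $\kappa_\bL\le (1+1/c_0)/(1-1/c_0)=\mathcal{O}(1)$, independent of $\gamma$. This is the argument of \cite{sun_distributed_2019}, and I would cite it for the numerical constant. The same step also gives positive semidefiniteness and $\operatorname{Null}(\bL)=\mathcal{S}$, so Assumption~\ref{assumption: Dk} holds and Theorem~\ref{theorem UPP-SC-OPT} applies.

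Second, plug into Theorem~\ref{theorem UPP-SC-OPT}. Choose $\tilde c=\frac14$, so $d_4=\frac18$, $d_2=\mathcal{O}(\bar M)$ and $d_3=\mathcal{O}(\bar M^2)$; then $\lambda_N^\bB=\Theta(\bar M)$ is admissible. Take
\[
\lambda_1^\bB=1/\mu=\mathcal{O}(\bar M\kappa_\bL)=\mathcal{O}(\bar M),
\]
satisfying the strict lower bound with a constant factor. Since $\kappa_\bB=\lambda_1^\bB/\lambda_N^\bB=\mathcal{O}(\kappa_\bL)$ and $\lambda_{N-1}^\bL=1/\kappa_\bL$, we can pick $\rho=\mathcal{O}(\kappa_\bL^2\bar M)=\mathcal{O}(\bar M)$. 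Then $\hat C_2=4\lambda_1^\bB+\mathcal{O}(1)=\mathcal{O}(\bar M)$. The quantity $\hat C_1$ depends only on the initialization and $\bar M$, so it is treated as a problem constant. Thus $e(T_k)\le\epsilon$ once $T_k\ge \hat C_1\hat C_2/\epsilon=\mathcal{O}(\bar M/\epsilon)$, and $T_c=\tau(T_k+1)=\mathcal{O}(\bar M\sqrt\gamma/\epsilon)$.

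The main obstacle is the first step: showing rigorously that $\kappa_\bL$ is bounded by an absolute constant for $\tau=\lceil\sqrt\gamma\rceil$. This needs careful handling of the Oracle's constants $c_1,c_2$ and a lower bound on $T_\tau(c_1)$ that is uniform in $\gamma$, including small $\gamma$ where $\tau=1$ and the bound follows trivially from $\kappa_\bL\le\gamma$. I would first try the explicit growth estimate $T_\tau(c_1)\ge\cosh(\tau\,\mathrm{arccosh}\,c_1)$ with $\mathrm{arccosh}\,c_1\ge 2/\sqrt{\gamma}$.
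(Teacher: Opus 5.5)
Your plan follows the paper's proof step for step. First, bound $\kappa_\bL$ by an absolute constant via the Chebyshev min--max property. The paper writes your $1/T_\tau(c_1)$ as $2c_0^\tau/(1+c_0^{2\tau})$, uses $c_0^\tau\le e^{-1}$, and reads $\gamma$ inside the oracle as the inverse condition number, exactly as you do. Then feed $\kappa_\bL=\mathcal{O}(1)$ into Theorem~\ref{theorem UPP-SC-OPT} with $\tilde c=\frac14$, $\lambda_N^\bB=\mathcal{O}(\bar M)$, and multiply the $\mathcal{O}(\bar M/\epsilon)$ iteration count by $\tau$.

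Your first step is sound, with one fix. The map $\lambda\mapsto c_1(1-c_2\lambda)$ sends $[\lambda_{N-1}^\bH,\lambda_1^\bH]$ onto $[-1,1]$ only if $c_2=2/((1+\gamma)\lambda_1^{\mathbf{P}})$. With the oracle's literal $c_2$ the image is $[\gamma/(1-\gamma),\,1/(1-\gamma)]$, so your check will fail until you correct this factor of $2$. The paper's proof silently uses the correct scaling, namely nonzero spectrum in $[1-c_1^{-1},1+c_1^{-1}]$.

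The genuine gap is in your second step, and the paper's proof makes the identical move. You treat $\lambda_1^\bB$, $\lambda_N^\bB$ and $\rho$ as independent knobs. In UPP-SC-OPT, however, $\bB=\frac{1}{\mu}\bI_{Nd}-\rho\bL$ with $\lambda_1^\bL=1$, so $\lambda_1^\bB=1/\mu$ and $\lambda_N^\bB=1/\mu-\rho$.

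The requirement $\rho\ge 9\kappa_\bB\lambda_N^\bB/(\tilde c\lambda_{N-1}^\bL)$ is just $\tilde c\ge 9\tilde\kappa$, i.e.\ $\rho\lambda_{N-1}^\bL\ge 9/(\tilde c\mu)$. Since $\lambda_{N-1}^\bL\le 1$ and $\tilde c<\frac12$, this forces $\rho>18/\mu$, hence $\lambda_N^\bB<-17/\mu<0$. That contradicts $\bB\succ\mathbf{O}$, which Lemma~\ref{lemma vk+1 - vk} assumes, and it contradicts $\lambda_N^\bB\ge(d_2+\sqrt{d_2^2+4d_4d_3})/(2d_4)>0$.

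So your choice $\lambda_N^\bB=\Theta(\bar M)$, $1/\mu=\mathcal{O}(\bar M)$, $\rho=\mathcal{O}(\kappa_\bL^2\bar M)$ is not admissible. In fact no admissible choice exists: the hypotheses of Theorem~\ref{theorem UPP-SC-OPT} are mutually inconsistent, so the bound you derive holds only vacuously.

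This is not an artifact of $\bG^k=\mu\bI_{Nd}$. For any block-diagonal $\bG^k$, test $\bB^k=(\bG^k)^{-1}-\rho\bL$ on a unit consensus vector and use that its diagonal blocks $(\bG_i^k)^{-1}-\rho[\bL]_{ii}$ are positive definite. This gives $\lambda_1^\bB>\frac{\rho}{N}\sum_{j=1}^{N-1}\lambda_j^\bL\ge\rho\frac{N-1}{N}\lambda_{N-1}^\bL$, i.e.\ $\tilde\kappa>\frac12$, which is incompatible with $9\tilde\kappa\le\tilde c\le 1$.

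Any admissible parameters must satisfy $\rho<1/\mu$, hence $\tilde\kappa>\kappa_\bL\ge 1$. To turn your argument into an actual proof, you need a replacement for Lemma~\ref{lemma tildePk+1 nonincreasing}. That replacement must absorb the dual-increment bound of Lemma~\ref{lemma vk+1 - vk} without requiring $\tilde\kappa\le\frac19$. Only after that can you re-check that $1/\mu=\mathcal{O}(\bar M)$, and hence $\mathcal{O}(\bar M/\epsilon)$ iterations, is permitted.
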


\begin{proof}
	See Appendix~\ref{proof theorem optimal communication complexity bound}.
\end{proof}

Note that our problem and network settings described by Assumptions~\ref{assumption smooth}--\ref{assumption polynomial} are encompassed within the specified class described in \cite{sun_distributed_2019}. From the perspective of the algorithm class, \cite{sun_distributed_2019} restricts that each node $i$ only broadcasts its local variable $x_i$ to its neighbors per iteration—a condition fulfilled by UPP-SC-OPT. Consequently, UPP-SC-OPT emerges as a communication-wise optimal method, aligning with the lower bound of communication complexity, as is stated in \cite{sun_distributed_2019,mancino2023decentralized}.
  
\section{Numerical results}\label{section: numerical results}
In this section, we evaluate the convergence performance of various versions of UPP via numerical examples. 

\begin{figure*}[ht]
	\centering
    \begin{minipage}{0.25\linewidth}
        \begin{subfigure}[]{
            \centering
            \includegraphics[height=4.3cm]{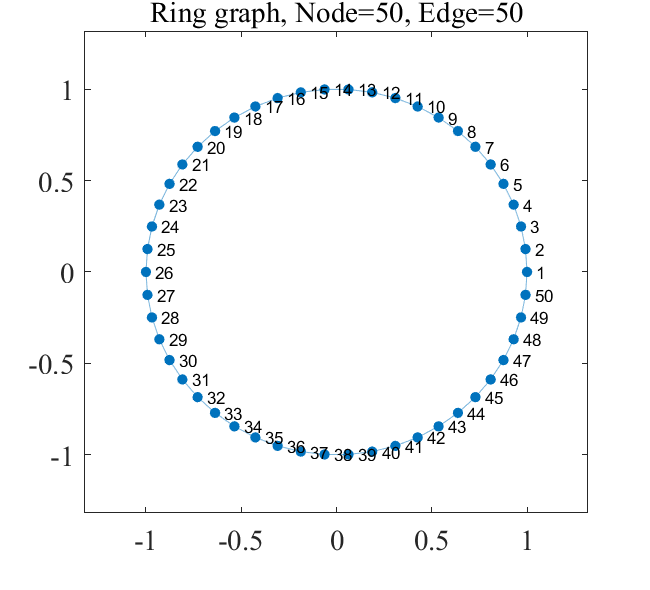}
            \label{ring graph}}
        \end{subfigure}
	\end{minipage}
	\begin{minipage}{0.3\linewidth}
	\begin{subfigure}[]{
		\centering
		\includegraphics[height=4.3cm]{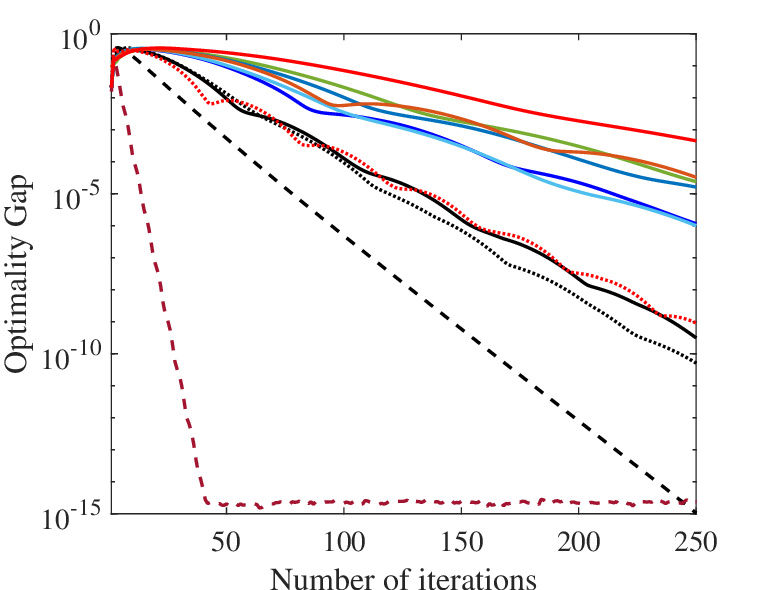}
		\label{ring graph iteration}
		}\end{subfigure}
	\end{minipage}
	\begin{minipage}{0.43\linewidth}
		\begin{subfigure}[]{
			\centering
			\includegraphics[height=4.3cm]{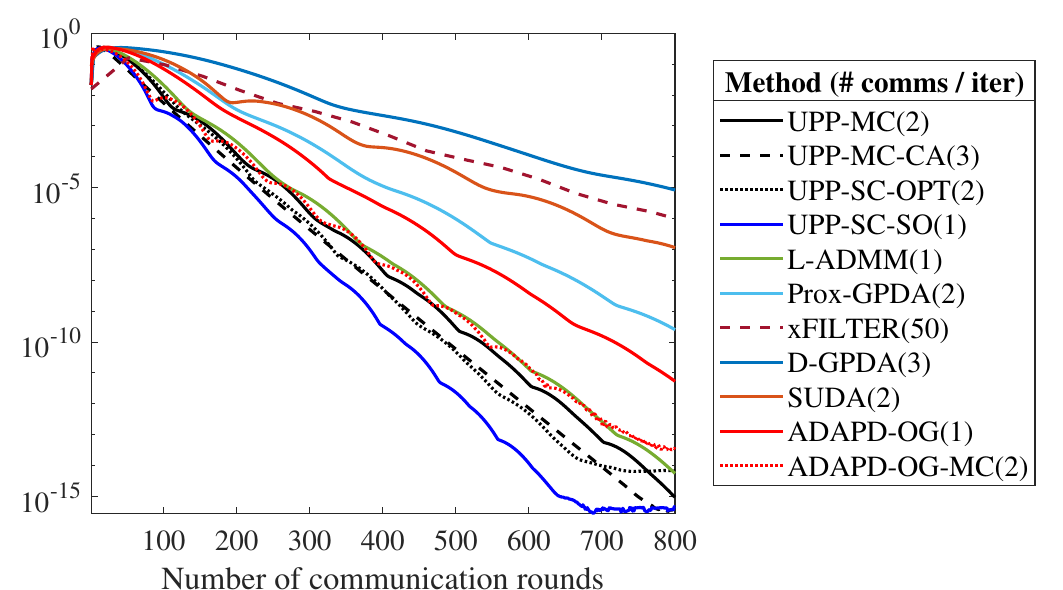}
			\label{ring graph communication}
			}\end{subfigure}
		\end{minipage}
	\caption{Convergence performance of related works on ring graph (${\gamma}=253.64$). In the legend, the number in the parentheses represents the number of communication rounds per iteration, and this notation applies equally to Fig. ~\ref{simulation for grid graph}, Fig. ~\ref{simulation for geometric graph} and Fig. ~\ref{simulation for regular graph}. \label{simulation for ring graph}}
\end{figure*}

\begin{figure*}[ht]
	\centering
    \begin{minipage}{0.25\linewidth}
        \begin{subfigure}[]{
            \centering
            \includegraphics[height=4.3cm]{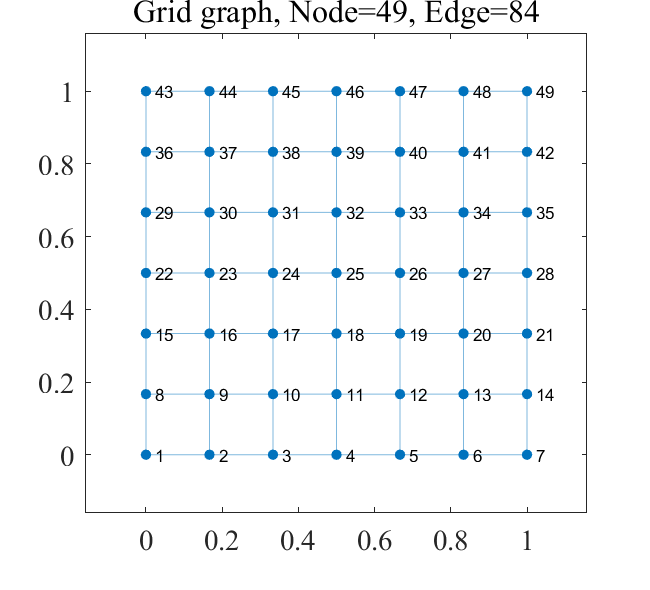}
            \label{grid graph}}
        \end{subfigure}
	\end{minipage}
	\begin{minipage}{0.3\linewidth}
	\begin{subfigure}[]{
		\centering
		\includegraphics[height=4.3cm]{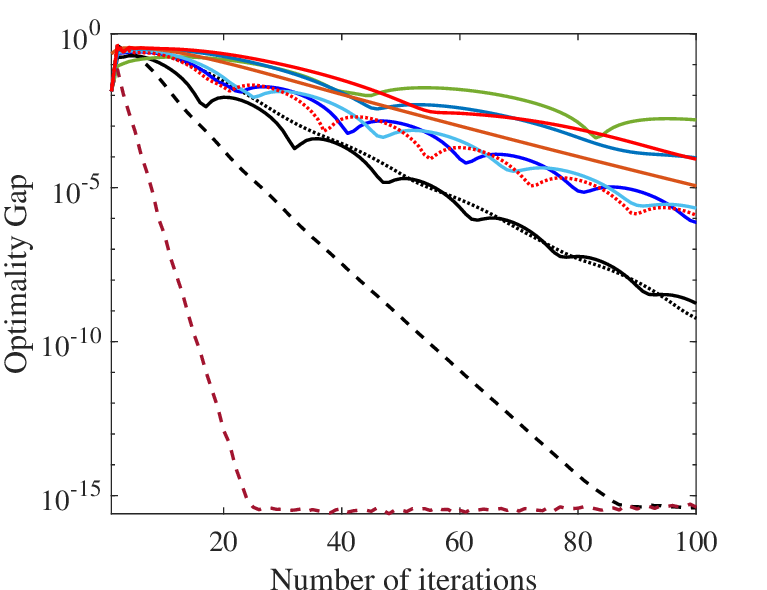}
		\label{grid graph iteration}
		}\end{subfigure}
	\end{minipage}
	\begin{minipage}{0.43\linewidth}
		\begin{subfigure}[]{
			\centering
			\includegraphics[height=4.3cm]{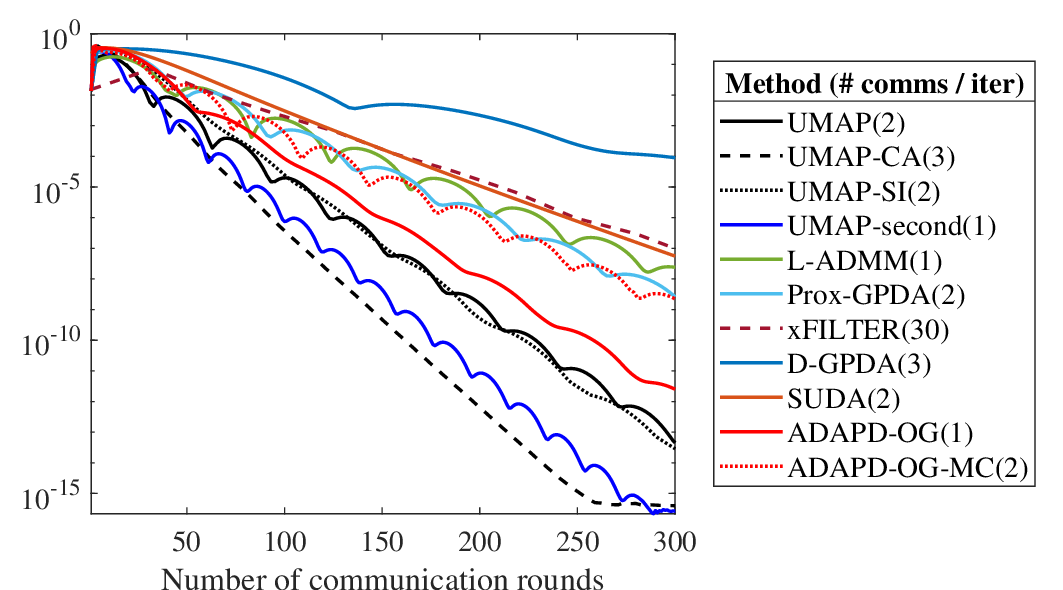}
			\label{grid graph communication}
			}\end{subfigure}
		\end{minipage}
	\caption{Convergence performance of related works on grid graph (${\gamma}=36.3$).\label{simulation for grid graph}}
\end{figure*}

\begin{figure*}[ht]
	\centering
    \begin{minipage}{0.25\linewidth}
        \begin{subfigure}[]{
            \centering
            \includegraphics[height=4.3cm]{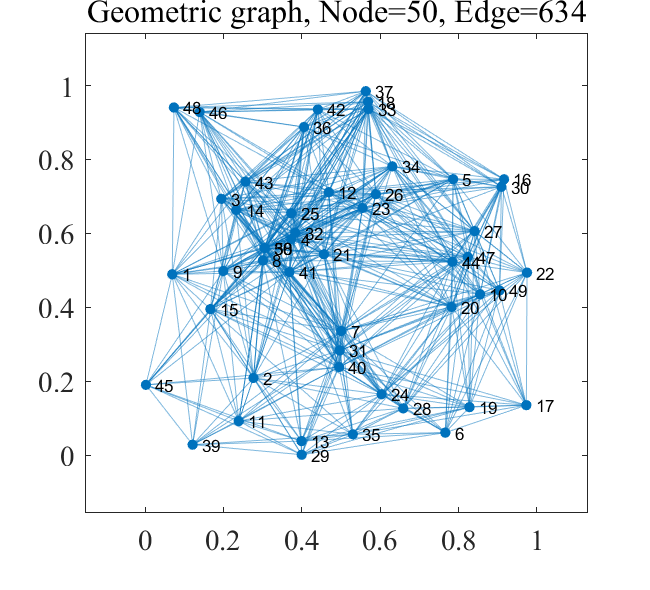}
            \label{geometric graph}}
        \end{subfigure}
	\end{minipage}
	\begin{minipage}{0.3\linewidth}
	\begin{subfigure}[]{
		\centering
		\includegraphics[height=4.3cm]{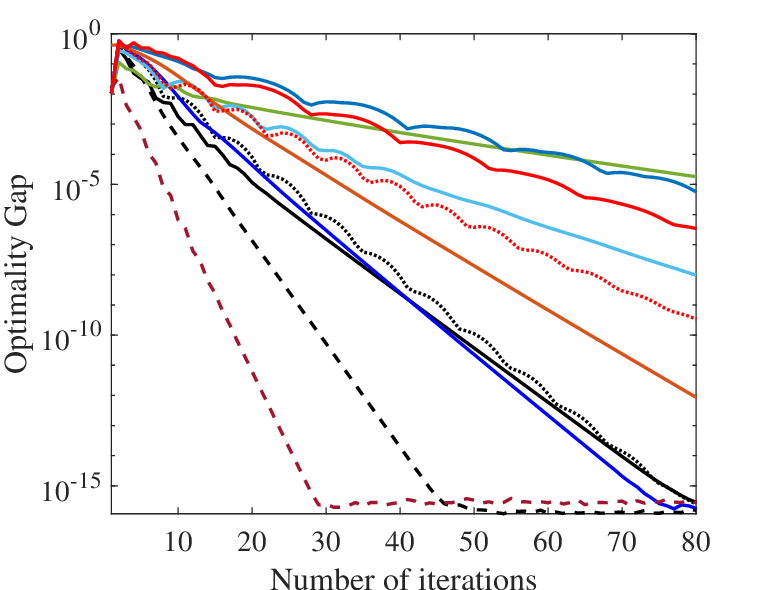}
		\label{geometric graph iteration}
		}\end{subfigure}
	\end{minipage}
	\begin{minipage}{0.43\linewidth}
		\begin{subfigure}[]{
			\centering
			\includegraphics[height=4.3cm]{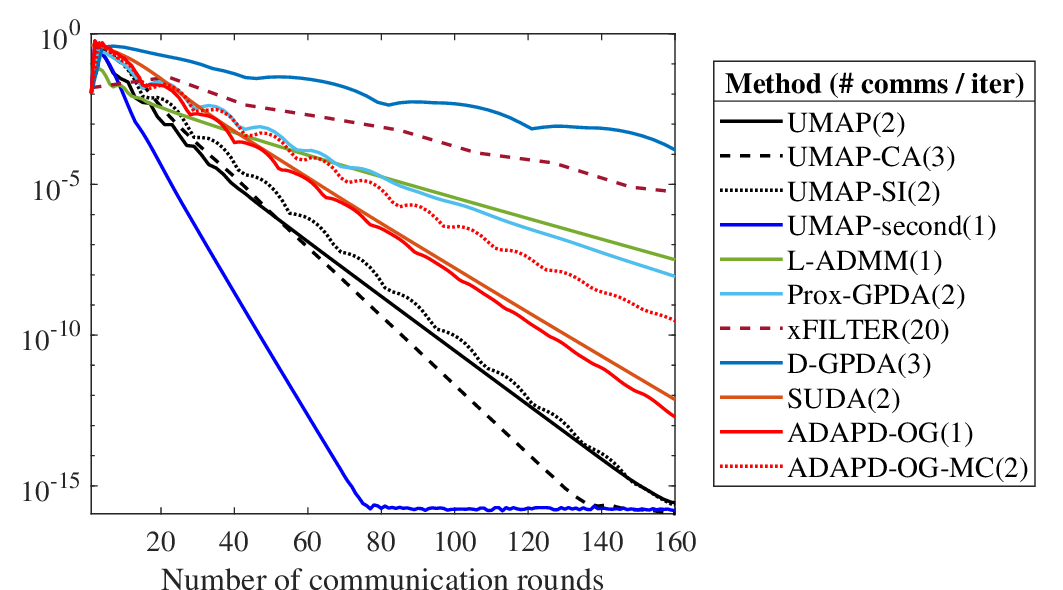}
			\label{geometric graph communication}
			}\end{subfigure}
		\end{minipage}
	\caption{Convergence performance of related works on geometric graph (${\gamma}=3.98$) with $r=0.5$.\label{simulation for geometric graph}}
\end{figure*}

\begin{figure*}[ht]
	\centering
    \begin{minipage}{0.25\linewidth}
        \begin{subfigure}[]{
            \centering
            \includegraphics[height=4.3cm]{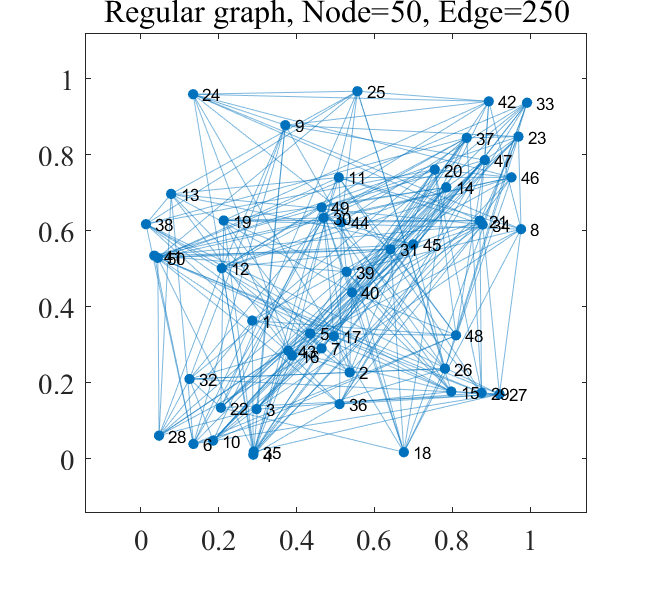}
            \label{regular graph}}
        \end{subfigure}
	\end{minipage}
	\begin{minipage}{0.3\linewidth}
	\begin{subfigure}[]{
		\centering
		\includegraphics[height=4.3cm]{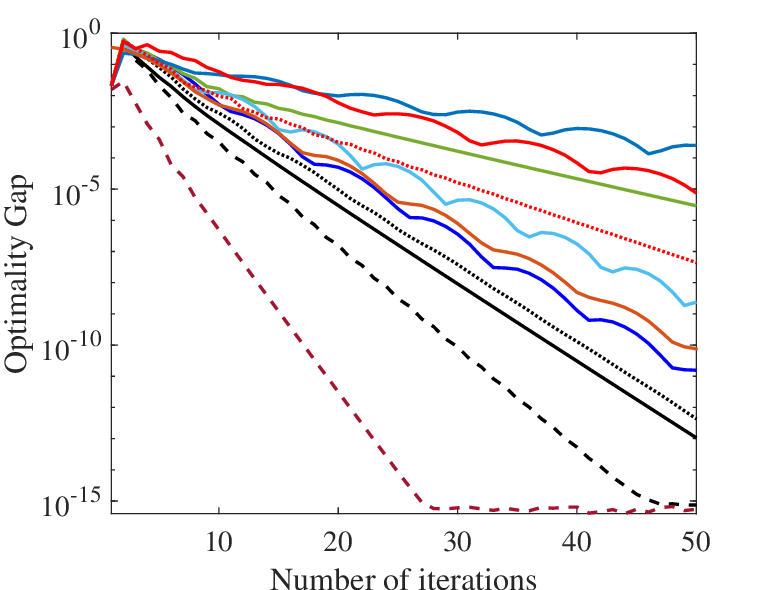}
		\label{regular graph iteration}
		}\end{subfigure}
	\end{minipage}
	\begin{minipage}{0.43\linewidth}
		\begin{subfigure}[]{
			\centering
			\includegraphics[height=4.3cm]{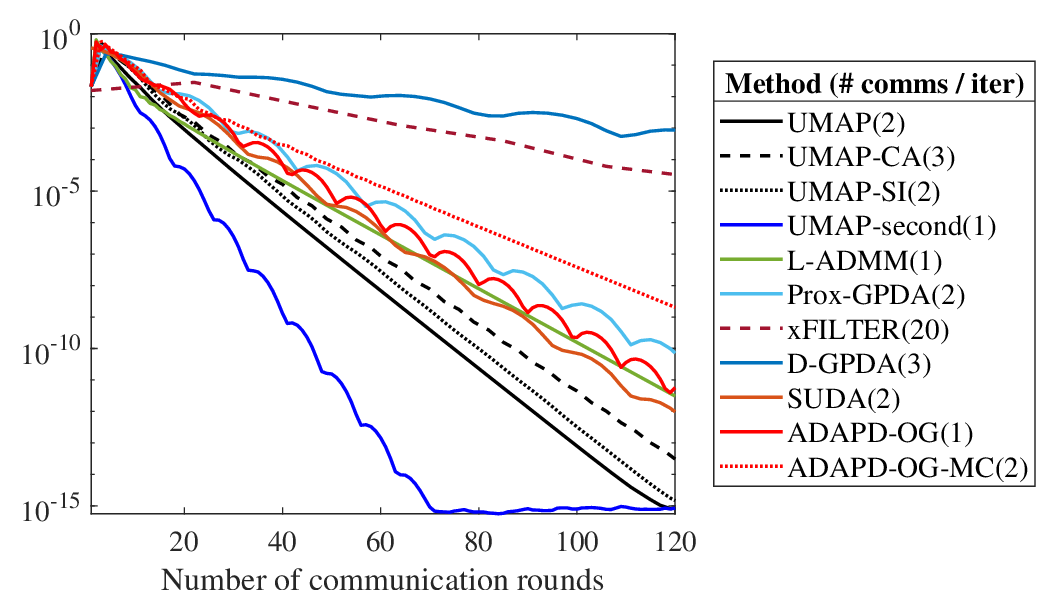}
			\label{regular graph communication}
			}\end{subfigure}
		\end{minipage}
	\caption{Convergence performance of related works on regular graph (${\gamma}=3.29$) with degree $=10$.\label{simulation for regular graph}}
\end{figure*}

We consider the distributed binary classification problem with nonconvex regularizers \cite{antoniadis2011penalized}, which satisfies Assumptions~\ref{assumption smooth}--\ref{assumption polynomial} and takes the form of \eqref{p1} with
\begin{equation*}
	f_i(x)=\frac{1}{m}\sum_{s}^{m}\log(1+\exp(-y_{is}x^{\mathsf{T}}z_{is}))+\sum_{t=1}^{d}\frac{\lambda\mu([x]_t)^2}{1+\mu([x]_t)^2},
\end{equation*}
where $m$ is the number of data samples of each node, $\lambda,\mu$ denote the regularization parameters, $y_{is}\in \{-1,1\}$ and $z_{is}\in \mathbb{R}^d$ denote the label and the feature for the $s$-th data sample of node $i$, respectively. In the simulation, we set $N=50$, $d=10$, $m=200$ with $\lambda=0.001$ and $\mu=1$. Besides, we randomly generate $y_{is}$ and $z_{is}$ for each node $i$, and construct ring graph, grid graph, regular graph with degree $=10$ and geometric graph with the radius threshold $r=0.5$.

We compare our proposed UPP-MC, UPP-MC-CA, UPP-SC-OPT and UPP-SC-SO with L-ADMM \cite{yi2022sublinear}, SUDA \cite{alghunaim_unified_2022}, Prox-GPDA \cite{hong_prox-pda_2017}, D-GPDA \cite{sun2018distributed}, ADAPD-OG \cite{mancino2023decentralized} and its multi-communication variant ADAPD-OG-MC which utilizes Chebyshev acceleration to facilitate convergence. We also include the xFILTER algorithm \cite{sun_distributed_2019}, which employs Chebyshev iterations to approximate the minimization step. In SUDA, we let $\mathbf{A}=\mathbf{I}-\mathbf{H},\mathbf{B}=\mathbf{H}^{1/2},\mathbf{C}=\mathbf{I}$; In ADAPD-OG-MC, we set its inner loop $R=2$; In UPP-MC, we let $\bD^k=\tilde{\bD}^k=\mathbf{H}$ and $\bG^k=\bI-\alpha\bH$; In UPP-MC-CA and UPP-SC-OPT, we fix $\tau=2$ for Chebyshev acceleration. We hand-optimize the parameters of all these algorithms, and measure their performance by the optimality gap given by the optimality gap defined as: $\|\nabla \tilde{f}(\mathbf{x})\|^2+\|\bH^{\frac{1}{2}}\mathbf{x}\|^2.$

Figure~\ref{ring graph iteration}, \ref{grid graph iteration}, \ref{geometric graph iteration} and \ref{regular graph iteration} illustrate the optimality gap versus the number of iterations. As UPP-MC-CA, UPP-SC-OPT, ADAPD-OG-MC and xFILTER require inner loops in each iteration, which increase the communication cost, we also compare their optimality gaps with respect to the communication rounds in Figure~\ref{ring graph communication}, \ref{grid graph communication}, \ref{geometric graph communication}, \ref{regular graph communication}. Note that each node transmits a local vector to its neighbors in a communication round and we mark the number of communication rounds within each iteration for the algorithms in the legends. 

In iteration-wise comparative experiments (as is illustrated in Figures~\ref{ring graph iteration}, \ref{grid graph iteration}, \ref{geometric graph iteration}, \ref{regular graph iteration}), our proposed UPP-MC, UPP-MC-CA, UPP-SC-OPT and UPP-SC-SO exhibit faster convergence than all existing baselines—except xFILTER. However, xFILTER demands excessive communication rounds per iteration, leading to significantly inferior communication-round-wise convergence performance, as is shown in Figures~\ref{ring graph communication}, \ref{grid graph communication}, \ref{geometric graph communication}, \ref{regular graph communication}. 

In terms of communication rounds, our first-order algorithms UPP-MC, UPP-SC-OPT and UPP-MC-CA converge faster than competing first-order baselines (L-ADMM, Prox-GPDA, xFILTER, D-GPDA, SUDA, ADAPD-OG, ADAPD-OG-MC). Moreover, our second-order method UPP-SC-SO delivers the fastest convergence among all compared first-order algorithms, at the cost of extra Hessian matrix computations.

We further analyze the impact of network topology on convergence performance. As shown in Figure~\ref{ring graph communication}, \ref{grid graph communication}, \ref{geometric graph communication} and \ref{regular graph communication}, UPP-MC-CA and UPP-SC-OPT—equipped with Chebyshev acceleration—outperform UPP-MC in sparse networks (characterized by a large condition number $\gamma$). In dense networks, however, UPP-MC exhibits superior performance to the two accelerated variants, as evidenced by Figure~\ref{regular graph communication}.
Notably, although UPP-SC-OPT theoretically achieves optimal communication complexity \cite{sun_distributed_2019}, it does not always yield the best communication-round-wise convergence in practice. This discrepancy arises because the optimal bound in \cite{sun_distributed_2019} applies specifically to algorithms that only exchange local decisions over the network, whereas UPP-MC and UPP-MC-CA additionally transmit gradients of the augmented Lagrangian. This also validates the effectiveness of designing the matrix $\bG^k$ as a polynomial of the weight matrix in UPP-MC.

\section{Conclusion}\label{section: conclusion}
We have developed a unifying primal-dual framework, referred to as UPP, to tackle distributed nonconvex optimization problems. We develop two distributed realizations of UPP, i.e., UPP-MC and UPP-SC, which can be specialized into various existing methods by adjusting the parameter configurations, encompassing both first-order and second-order algorithms. Theoretical analysis demonstrates that UPP-MC and UPP-SC converge to stationarity at a sublinear rate and UPP-MC further achieves linear convergence to the global optimum under the P-{\L} condition. These results enrich the applicability and the convergence rate guarantees of multiple prior distributed optimization algorithms. Moreover, we leverage Chebyshev polynomials to facilitate the communication process of our algorithms, enabling UPP-SC-OPT to attain the optimal communication complexity bound. Through numerical experiments under different network topologies, we compare our algorithms with multiple state-of-the-art approaches and validate the superior performance of our proposed methods in both convergence speed and communication efficiency.

\section{APPENDIX} \label{appendix}
\subsection{Proof of Proposition~\ref{proposition parameter range}}\label{proof proposition}
(\romannumeral1) In order to present our convergence results, we first introduce the following notations:
\begin{align}
	\mathbf{K} =& (\mathbf{I}_N - \frac{1}{N} \mathbf{1}_N \mathbf{1}_N ^{\mathsf{T}}) \otimes \mathbf{I}_d = (\tilde{\mathbf{R}}\operatorname{diag}(0,1,\dots,1) \tilde{\mathbf{R}}^\mathsf{T}) \otimes \mathbf{I}_d, \notag\\
	\mathbf{J} =& \frac{1}{N}\mathbf{1}_N \mathbf{1}_N ^{\mathsf{T}} \otimes \mathbf{I}_d = (\tilde{\mathbf{R}}\operatorname{diag}(1,0,\dots,0) \tilde{\mathbf{R}}^\mathsf{T}) \otimes \mathbf{I}_d, \notag\\
	\bar{x}^k =& \frac{1}{N}(\mathbf{1}_N^{\mathsf{T}} \otimes \mathbf{I}_d) \mathbf{x}^k, \quad \bar{\mathbf{x}}^k = \mathbf{J} \mathbf{x}^k,\quad \mathbf{s}^k=\mathbf{q}^k+\frac{1}{\theta}\nabla \tilde{f}(\bar{\mathbf{x}}^k), \notag\\
	\mathbf{g}^k =& \nabla \tilde{f}(\mathbf{x}^k), \quad \bar{\mathbf{g}}^k = \mathbf{J} \mathbf{g}^k, \quad \mathbf{g}_a^k = \nabla \tilde{f}(\bar{\mathbf{x}}^k), \quad \bar{\mathbf{g}}_a^k = \mathbf{J} \mathbf{g}_a^k, \notag
\end{align}
where $\tilde{\mathbf{R}}\in \Re ^{N \times N}$ with its first column $\mathbf{r} = \frac{1}{\sqrt{N}} \mathbf{1}_N$. Hence, we have $\mathbf{K}^2=\mathbf{K}$, $\mathbf{J}^2=\mathbf{J}$, $\mathbf{J}\mathbf{q}^k = \mathbf{J}\bH^{\frac{1}{2}}\mathbf{v}^k= \mathbf{0}_{Nd}$.

We first establish some results for the mixing matrices. From Assumption~\ref{assumption: Dk}, we have $ \operatorname{Null} (\mathbf{D}^k) = \operatorname{Null} (\tilde{\mathbf{D}}^k) =\operatorname{Null} (\mathbf{K}) = \mathcal{S}$, and thus 
\begin{align}
	\label{KDk=DkK}\mathbf{K}\mathbf{D}^k = \mathbf{D}^k\mathbf{K} = \mathbf{D}^k, \\
	\label{KtildeDk=tildeDkK}\mathbf{K}\mathbf{\tilde{D}}^k = \mathbf{\tilde{D}}^k\mathbf{K} = \mathbf{\tilde{D}}^k.
\end{align}
Also, we have $\mathbf{J}\mathbf{D}^k = \mathbf{J}\tilde{\mathbf{D}}^k=\mathbf{O}_{Nd}$. Here, we define ${\kappa_{\mathbf{D}}^k} = \frac{\lambda_{1}^{\mathbf{D}^k}}{\lambda_{N-1}^{\mathbf{D}^k}}\geq 1$. Then, Assumption~\ref{assumption: Dk} implies
	\begin{align}
		\mathbf{O}_{Nd} \preceq&(\lambda_{1}^{\mathbf{D}^k}/\kappa_{\mathbf{D}}^k)\mathbf{K} = \lambda_{N-1}^{\mathbf{D}^k}\mathbf{K} \preceq \mathbf{D}^k \preceq \lambda_{1}^{\mathbf{D}^k}\mathbf{K}. \label{Dk range}
	\end{align}

Moreover, $\bD^k$ and $\tilde{\bD}^k$ possess the eigen-decompositions as $\mathbf{D}^k = (\tilde{\mathbf{R}} \Lambda_{\mathbf{D}^k} \tilde{\mathbf{R}}^{\mathsf{T}})\otimes \mathbf{I}_d$ and $\tilde{\bD}^k = (\tilde{\mathbf{R}} \Lambda_{\tilde{\mathbf{D}}^k} \tilde{\mathbf{R}}^{\mathsf{T}})\otimes \mathbf{I}_d$, where $\Lambda_{\mathbf{D}^k}=\operatorname{diag}(0,\lambda_{N-1}^{\bD^k},\dots,\lambda_{1}^{\bD^k})$ and $\Lambda_{\tilde{\mathbf{D}}^k}=\operatorname{diag}(0,\lambda_{N-1}^{\tilde{\bD}^k},\dots,\lambda_{1}^{\tilde{\bD}^k})$. In particular, since $\mathbf{D}^k$ is a polynomial of $\mathbf{H}=\mathbf{P}\otimes \mathbf{I}_{d}$, each diagonal entry of $\Lambda_{\mathbf{D}^k}$ is the polynomial of each eigenvalue of $\mathbf{P}$ with the same coefficients. Next, we define $\mathbf{Q}^k := (\tilde{\mathbf{R}} \Lambda _{\tilde{\mathbf{D}}^k} ^{\dagger} \tilde{\mathbf{R}}^{\mathsf{T}}) \otimes \mathbf{I}_d$ such that 
\begin{equation}
	\mathbf{Q}^k \tilde{\mathbf{D}}^k = \tilde{\mathbf{D}}^k \mathbf{Q}^k= \mathbf{K}. \label{QktildeDk=tildeDkQk}
\end{equation}

From $\mathbf{G}^k = (\tilde{\mathbf{R}}\Lambda_{\mathbf{G}^k}\tilde{\mathbf{R}}^{\mathsf{T}}) \otimes \mathbf{I}_{d}$, where $\Lambda_{\mathbf{G}^k} = \operatorname{diag}({\zeta^k}, \lambda_2^{\mathbf{G}^k}, \dots, \lambda_N^{\mathbf{G}^k})$ with $\zeta^k=\lambda_1^{\mathbf{G}^k}>\lambda_2^{\mathbf{G}^k} \geq \dots\geq\lambda_N^{\mathbf{G}^k}$, together with ${\kappa_{\mathbf{G}}^k} = \frac{\lambda_{2}^{\mathbf{G}^k}}{\lambda_{N}^{\mathbf{G}^k}}\geq 1$, we have
\begin{align}
	\mathbf{O}_{Nd} \preceq (\lambda_{2}^{\mathbf{G}^k}/\kappa_{\mathbf{G}}^k)\mathbf{K} = \lambda_{N}^{\mathbf{G}^k}\mathbf{K} \preceq \mathbf{G}^k \mathbf{K} \preceq \lambda_{2}^{\mathbf{G}^k}\mathbf{K}. \label{Gk range}
\end{align}
For some $0<\underline{\varepsilon}<\bar{\varepsilon}<1$, we set
\begin{equation}
	\underline{\varepsilon}\mathbf{D}^k\mathbf{G}^k\preceq\tilde{\mathbf{D}}^k \preceq \bar{\varepsilon}\mathbf{D}^k\mathbf{G}^k, \quad \forall k \geq 0. \label{tildeD range}
\end{equation}
Thus,
\begin{equation}
	\frac{1}{\bar{\varepsilon}}\mathbf{K}\preceq\mathbf{G}^k\mathbf{D}^k\mathbf{Q}^k \preceq \frac{1}{\underline{\varepsilon}}\mathbf{K}, \quad \forall k\geq 0. \label{GkDkQk}
\end{equation}

(\romannumeral2) For $k\geq 0$, we define the following parameters:
\begin{align}
		{\epsilon_1^k} =&  \frac{\rho \lambda_{1}^{\mathbf{D}^k}(1-\bar{\varepsilon})}{\kappa_{\mathbf{D}}^k\kappa_{\mathbf{G}}^k} -\big(\frac{4+4\theta+\theta^2}{4\kappa_{\mathbf{G}}^k}+(\frac{1}{2}+\frac{1}{\theta})\bar{M}^2\big),\notag\\
		{\epsilon_2^k} =& (\rho\lambda_{1}^{\mathbf{D}^k})^2(2+\frac{3\bar{\varepsilon}}{2}+\frac{11\bar{\varepsilon}^2}{8})\notag\\
		&+ \big(2+\frac{5\bar{\varepsilon}}{2}+\frac{7\bar{\varepsilon}^2}{8}+\bar{\varepsilon}\rho\lambda_{1}^{\mathbf{D}^k}+\frac{1}{2}(\rho\lambda_{1}^{\mathbf{D}^k})^2\big)\bar{M}^2, \notag\\
		{\epsilon_3^k} =&\frac{\theta}{4\kappa_\bG^k},\quad {\epsilon_4^k} = \frac{1}{2}+(\frac{7}{2}+\bar{\varepsilon})\theta^2+\frac{1}{2}(\rho\lambda_1^{\bD^k}),\notag\\
		{\epsilon_{5}^k} =& {\xi_1^k}+\frac{{\xi_2^k}}{\lambda_{2}^{\mathbf{G}^k}} \text{, with } {\xi_1^k} = \frac{\bar{M}}{2}+\frac{\bar{M}^2}{\theta^2}(\frac{3}{2\bar{\varepsilon}}+\frac{7}{8}), \notag\\
		{\xi_2^k} =& \frac{\kappa_{\mathbf{G}}^k\bar{M}^2}{\theta^2}\big(\frac{1}{\theta}(\frac{1}{4}+\frac{1}{\bar{\varepsilon}})^2+\frac{1}{2}\big). \label{epsilon1-epsilon5}
	\end{align}
Moreover, We choose the algorithm parameters of \eqref{zk final}--\eqref{qk+1 final} in the following way:
\begin{gather}
	0< \bar{\varepsilon}<1,\quad \theta>\max_{k\geq 0}\{2\bar{M}\sqrt{\kappa_\bG^k},2(\frac{1}{4}+\frac{1}{\bar{\varepsilon}})^2\},\label{range varepsilon theta}\\
	\rho > \max_{k\geq 0}{\left(\frac{4+4\theta+\theta^2}{4\kappa_{\mathbf{G}}^k}+(\frac{1}{2}+\frac{1}{\theta})\bar{M}^2\right)\kappa_{\mathbf{D}}^k\kappa_{\mathbf{G}}^k}/\big({\lambda_{1}^{\mathbf{D}^k}(1-\bar{\varepsilon})}\big),\label{range rho}\\
	0<  \lambda_{2}^{\mathbf{G}^k} <\min\, \{ \frac{{\epsilon_1^k}}{{\epsilon_2^k}}, \frac{{\epsilon_3^k}}{\epsilon_4^k}, \frac{\frac{1}{4}-{\xi_2^k}}{\xi_1^k},\frac{1}{\bar{\varepsilon}\theta\rho \lambda_{1}^{\mathbf{D}^k}}\}, \label{range hat lambda Gk}\\
	\lambda_{2}^{\mathbf{G}^k} \leq \zeta^k < \min \{\frac{1}{4{\epsilon_{5}^k}}, \frac{\kappa_p^k\lambda_{2}^{\mathbf{G}^k}}{\kappa_p^k-1}\}, \label{range zetak}\\
	\frac{1}{\bar{\varepsilon}}<\frac{1}{\underline{\varepsilon}}\leq\frac{\theta\lambda_2^{\bG^k}}{4\kappa_\bG^k}+\frac{1}{\bar{\varepsilon}}<\frac{1}{4}+\frac{1}{\bar{\varepsilon}}.\label{range underline varepsilon}
\end{gather}

Subsequently, we show that the parameters in \eqref{epsilon1-epsilon5} and the parameter selections in \eqref{range varepsilon theta}--\eqref{range underline varepsilon} are well-defined.

	Considering $0<\bar{\varepsilon}<1$ from \eqref{range varepsilon theta}, we obtain $\rho>0$, making \eqref{range rho} well-defined. Additionally, from $0<\bar{\varepsilon}<1$ and \eqref{range rho}, we have $\epsilon_1^k>0$. Since $\theta>\max_{k\geq 0}\{2\bar{M}\sqrt{\kappa_\bG^k},2(\frac{1}{4}+\frac{1}{\bar{\varepsilon}})^2\}$ in \eqref{range varepsilon theta}, we obtain ${\xi_2^k}<\frac{1}{4}$, ensuring the well-posedness of \eqref{range hat lambda Gk}. Then, from $\lambda_{2}^{\mathbf{G}^k}<(\frac{1}{4}-{\xi_2^k})/{\xi_1^k}$, we have $\lambda_{2}^{\mathbf{G}^k}<\frac{1}{4\epsilon_{5}^k}$. Together with $\kappa_p^k > 1$, we show that \eqref{range zetak} is well-posed. From $\lambda_{2}^{\mathbf{G}^k}<\frac{\epsilon_3^k}{\epsilon_4^k}$ in \eqref{range hat lambda Gk}, we obtain $\frac{\theta\lambda_2^{\bG^k}}{\kappa_\bG^k}<1$, and thus \eqref{range underline varepsilon} holds.

(\romannumeral3) Next, we establish some results based on the parameter selections in  \eqref{range varepsilon theta}--\eqref{range underline varepsilon}. From $\lambda_{2}^{\mathbf{G}^k}<\frac{\epsilon_1^k}{\epsilon_2^k}$ in \eqref{range hat lambda Gk}, we obtain $ \rho \lambda_1^{\mathbf{D}^k}\lambda_2^{\mathbf{G}^k}<\frac{\epsilon_1^k\rho \lambda_1^{\mathbf{D}^k}}{\epsilon_2^k} < \frac{(\rho\lambda_1^{\mathbf{D}^k})^2}{2\kappa_{\mathbf{D}}^k\kappa_{\mathbf{G}}^k(\rho\lambda_1^{\mathbf{D}^k})^2}<1$. Hence,
	\begin{equation}
		\label{rhoDkGk} \rho \mathbf{D}^k \mathbf{G}^k \preceq \mathbf{K}.
	\end{equation}
    Together with \eqref{tildeD range}, we have
    \begin{equation}\label{rho tildeDk Gk preceq}
        \rho\tilde{\bD}^k\bG^k\preceq \bar{\varepsilon} \bG^k\mathbf{K}.
    \end{equation}
	From $\lambda_{2}^{\mathbf{G}^k}<\sqrt{\frac{1}{\bar{\varepsilon}\theta\rho\lambda_{1}^{\mathbf{D}^k}}}$ in \eqref{range hat lambda Gk}, 
	\begin{equation}
		\label{theta rho tildeDk Gk} \theta\rho\tilde{\mathbf{D}}^k\bG^k \preceq \mathbf{K}.
	\end{equation}
	Besides, $\zeta^k<\frac{1}{4\epsilon_5^k}$ in \eqref{range zetak} implies that
	\begin{equation}
		\label{zetak leq theta2} {\zeta^k}<\frac{\theta^2}{2\kappa_{\mathbf{G}}^k\bar{M}^2}\lambda_{2}^{\mathbf{G}^k}.
	\end{equation}
Because of \eqref{Gk} and because $\mathbf{J}P_{\tau_d^k}(\mathbf{H})=0$, we have
	\begin{align}
		\mathbf{J}\mathbf{G}^k = & \mathbf{J}(\zeta^k \mathbf{I}_{Nd}-\eta^k P_{\tau_d^k}(\mathbf{H}))=\zeta^k \mathbf{J}. \label{JGk}
	\end{align}
From \eqref{qk=tildeH1/2vk}, \eqref{zk final}, \eqref{xk+1 final}, \eqref{JGk} and $\mathbf{J}\mathbf{D}^k=\mathbf{J}\tilde{\mathbf{D}}^k=0$ that 
\begin{align}
	\bar{\mathbf{x}}^{k+1} - \bar{\mathbf{x}}^k = -\mathbf{J} \mathbf{G}^k(\mathbf{g}^k + \rho \mathbf{D}^k \mathbf{x}^k + \mathbf{q}^k) = - {\zeta^k}\bar{\mathbf{g}}^k. \label{bar xk+1 - bar xk}
\end{align}
Due to Assumption~\ref{assumption smooth}, we have
\begin{align}
	\|\mathbf{g}_a^k - \mathbf{g}^k\|^2 \leq \bar{M}^2\|\bar{\mathbf{x}}^k - \mathbf{x}^k\|^2 = \bar{M}^2 \|\mathbf{x}^k\|^2_\mathbf{K}. \label{g0k-gk}
\end{align}
Then, since $\lambda_1^{\mathbf{J}} = 1$, we have
\begin{align}
	\|\bar{\mathbf{g}}_a^k - \bar{\mathbf{g}}^k\|^2 = & \|\mathbf{J}(\mathbf{g}_a^k - \mathbf{g}^k)\|^2 \leq \bar{M}^2 \|\mathbf{x}^k\|^2_\mathbf{K}. \label{bar g0k- bar gk}
\end{align}
From Assumption~\ref{assumption smooth} and \eqref{bar xk+1 - bar xk}, we have
\begin{align}
	\|\mathbf{g}_a^{k+1} - \mathbf{g}_a^k\|^2 \leq \bar{M}^2 \|\bar{\mathbf{x}}^{k+1} - \bar{\mathbf{x}}^{k}\|^2 = (\zeta^k)^2 \bar{M}^2\|\bar{\mathbf{g}}^k\|^2. \label{g0k+1-g0k}
\end{align}

(\romannumeral4) To establish Proposition~\ref{proposition parameter range}, we then provide the dynamics of the sequence \eqref{def V} in the following lemma.

\begin{lemma}
	Suppose Assumptions~\ref{assumption smooth}--\ref{assumption polynomial} and \eqref{range varepsilon theta}--\eqref{range underline varepsilon} hold. Let $\{\mathbf{x}^k\}$ be the sequence generated by \eqref{zk final}--\eqref{qk+1 final}. Then, for any $k \geq 0$,
	\begin{align}
		V^{k+1} - V^k\leq & - \|\mathbf{x}^k\|^2_{\lambda_{2}^{\mathbf{G}^k} ({\epsilon_1^k} - {\epsilon_2^k} \lambda_{2}^{\mathbf{G}^k})\mathbf{K} }- \|\mathbf{s}^k \|^2_{ \lambda_{2}^{\mathbf{G}^k}({\epsilon_3^k} - \epsilon_4^k \lambda_{2}^{\mathbf{G}^k}) \mathbf{K}}- {\zeta^k}(\frac{1}{4} - {\epsilon_{5}^k} {\zeta^k}) \|\bar{\mathbf{g}}^k\|^2 - \frac{{\zeta^k}}{4}\|\bar{\mathbf{g}}_a^k\|^2, \label{tilde Vk+1-tilde Vk}
	\end{align}
	where the parameters $\epsilon_1^k$, $\epsilon_2^k$, $\epsilon_3^k$, $\epsilon_4^k$ and $\epsilon_{5}^k$ is presented in \eqref{epsilon1-epsilon5}.
\end{lemma}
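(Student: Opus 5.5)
The plan is to split $V^{k+1}-V^k$ into the four pieces of \eqref{def V}, compute the one-step change of each from the recursions \eqref{zk final}--\eqref{qk+1 final}, and then absorb all cross terms by Young's inequality using the spectral bounds \eqref{Dk range}, \eqref{Gk range}, \eqref{GkDkQk}, \eqref{rhoDkGk}--\eqref{zetak leq theta2}. Throughout I work in the common eigenbasis $\tilde{\mathbf{R}}\otimes\mathbf{I}_d$. There $\mathbf{K}$, $\mathbf{J}$, $\mathbf{D}^k$, $\tilde{\mathbf{D}}^k$, $\mathbf{G}^k$, $\mathbf{Q}^k$ are all simultaneously diagonal, so they commute and products such as $\mathbf{G}^k\mathbf{D}^k\mathbf{K}$ can be bounded eigenvalue-wise.

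\textbf{Consensus and dual parts.} I first write the consensus and dual parts in closed form. Using $\mathbf{K}\mathbf{q}^k=\mathbf{q}^k=\mathbf{K}\mathbf{s}^k-\frac1\theta\mathbf{K}\mathbf{g}_a^k$, I get
\[
\mathbf{K}\mathbf{x}^{k+1}=(\mathbf{K}-\rho\mathbf{G}^k\mathbf{D}^k)\mathbf{x}^k-\theta\mathbf{G}^k\mathbf{K}\mathbf{s}^k-\mathbf{G}^k\mathbf{K}(\mathbf{g}^k-\mathbf{g}_a^k).
\]
Similarly,
\[
\mathbf{K}\mathbf{s}^{k+1}=\mathbf{K}\mathbf{s}^k+\rho\tilde{\mathbf{D}}^k\mathbf{x}^{k+1}+\tfrac1\theta\mathbf{K}(\mathbf{g}_a^{k+1}-\mathbf{g}_a^k).
\]
Substituting the first into the second expresses $\mathbf{K}\mathbf{s}^{k+1}$ through $\mathbf{x}^k$, $\mathbf{s}^k$ and gradient differences. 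With these, I expand $\frac12\|\mathbf{x}^{k+1}\|_\mathbf{K}^2$, $\frac1{2\bar\varepsilon}\|\mathbf{s}^{k+1}\|_\mathbf{K}^2$ and $\langle\mathbf{x}^{k+1},\mathbf{K}\mathbf{s}^{k+1}\rangle$ and subtract their values at $k$.

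The design \eqref{tildeD range} is what makes the quadratic form close. The leading terms give $-\rho\|\mathbf{x}^k\|^2_{\mathbf{G}^k\mathbf{D}^k}$, which by \eqref{Dk range}, \eqref{Gk range} is at most $-\rho\lambda_{N-1}^{\mathbf{D}^k}\lambda_N^{\mathbf{G}^k}\|\mathbf{x}^k\|^2_\mathbf{K}$. That explains the factor $\lambda_1^{\mathbf{D}^k}/(\kappa_\mathbf{D}^k\kappa_\mathbf{G}^k)$ in $\epsilon_1^k$. The cross term $\langle\mathbf{x},\mathbf{K}\mathbf{s}\rangle$ in $V$ is included precisely to cancel the mixed term $-\theta\langle\mathbf{x}^k,\mathbf{G}^k\mathbf{K}\mathbf{s}^k\rangle$ against the term $\frac{\rho}{\bar\varepsilon}\langle\mathbf{s}^k,\tilde{\mathbf{D}}^k\mathbf{x}^k\rangle$ up to an error controlled by $1-\bar\varepsilon$ and by \eqref{GkDkQk}. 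The $\mathbf{s}$-cross term then leaves $-\theta\|\mathbf{s}^k\|^2_{\mathbf{G}^k\mathbf{K}}$ plus $\frac{1}{\bar\varepsilon}\rho\theta\|\mathbf{s}^k\|^2_{\tilde{\mathbf{D}}^k\mathbf{G}^k}$-type terms. These are tamed by \eqref{theta rho tildeDk Gk} and \eqref{range underline varepsilon}, and they produce $\epsilon_3^k=\theta/(4\kappa_\mathbf{G}^k)$. All second-order terms, i.e. squares of $\rho\mathbf{G}^k\mathbf{D}^k$, $\theta\mathbf{G}^k$ and the gradient differences, come with an extra factor $\lambda_2^{\mathbf{G}^k}$. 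These give the $-\epsilon_2^k(\lambda_2^{\mathbf{G}^k})^2$ and $-\epsilon_4^k(\lambda_2^{\mathbf{G}^k})^2$ corrections, after using \eqref{g0k-gk}, \eqref{rhoDkGk} and \eqref{rho tildeDk Gk preceq}.

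\textbf{Objective part and collection.} For $f(\bar x^{k+1})-f(\bar x^k)$ I use \eqref{bar xk+1 - bar xk} and the descent lemma for the $\bar M$-smooth $f$:
\[
f(\bar x^{k+1})-f(\bar x^k)\le-\zeta^k\langle\bar{\mathbf{g}}_a^k,\bar{\mathbf{g}}^k\rangle+\tfrac{\bar M}{2}(\zeta^k)^2\|\bar{\mathbf{g}}^k\|^2,
\]
with the usual $N$-scaling conventions for the averaged quantities. The identity $-\langle a,b\rangle=-\frac12\|a\|^2-\frac12\|b\|^2+\frac12\|a-b\|^2$ together with \eqref{bar g0k- bar gk} gives $-\frac{\zeta^k}{2}(\|\bar{\mathbf{g}}_a^k\|^2+\|\bar{\mathbf{g}}^k\|^2)+\frac{\zeta^k\bar M^2}{2}\|\mathbf{x}^k\|^2_\mathbf{K}$. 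The leftover consensus term is absorbed into $\epsilon_1^k$; this is the $\frac12\bar M^2$ appearing there. The terms $\mathbf{g}_a^{k+1}-\mathbf{g}_a^k$ arising in the $\mathbf{s}$-recursion are bounded by \eqref{g0k+1-g0k} by $(\zeta^k)^2\bar M^2\|\bar{\mathbf{g}}^k\|^2/\theta^2$. Together with the descent-lemma curvature term these form $\epsilon_5^k(\zeta^k)^2\|\bar{\mathbf{g}}^k\|^2$. Whenever they are paired with $\mathbf{s}$ or $\mathbf{x}$ through Young's inequality, the weights are chosen so that \eqref{zetak leq theta2} converts the $\zeta^k$ factor into $\lambda_2^{\mathbf{G}^k}$. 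This is the origin of $\xi_2^k/\lambda_2^{\mathbf{G}^k}$. Finally I collect coefficients of $\|\mathbf{x}^k\|^2_\mathbf{K}$, $\|\mathbf{s}^k\|^2_\mathbf{K}$, $\|\bar{\mathbf{g}}^k\|^2$ and $\|\bar{\mathbf{g}}_a^k\|^2$, reserving $\frac{\zeta^k}{4}$ of each gradient term for the final display.

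\textbf{Main obstacle.} The main difficulty is bookkeeping the Young's-inequality weights so that the constants come out exactly as in \eqref{epsilon1-epsilon5}. The critical pairing is the inner product $\langle\mathbf{x}^k,\mathbf{K}\mathbf{s}^k\rangle$-type cross terms. Their cancellation relies on $\tilde{\mathbf{D}}^k$ being sandwiched between $\underline\varepsilon\mathbf{D}^k\mathbf{G}^k$ and $\bar\varepsilon\mathbf{D}^k\mathbf{G}^k$. The plan there is to use \eqref{GkDkQk} to rewrite $\rho\tilde{\mathbf{D}}^k$ as $\rho\mathbf{D}^k\mathbf{G}^k$ times a factor in $[\underline\varepsilon,\bar\varepsilon]$, and to spend the slack $1-\bar\varepsilon$ and $\frac1{\underline\varepsilon}-\frac1{\bar\varepsilon}\le\frac{\theta\lambda_2^{\mathbf{G}^k}}{4\kappa_\mathbf{G}^k}$ on the residual.
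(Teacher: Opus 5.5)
Your overall architecture matches the paper's: split $V^{k+1}-V^k$ into the four pieces of \eqref{def V}, expand each via \eqref{zk final}--\eqref{qk+1 final}, and close with Young's inequality and the commuting spectral bounds. The problem is the step you single out as the crux, which is misidentified.

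\textbf{Which mixed terms actually cancel.} At first order there are three $\mathbf{x}$--$\mathbf{s}$ mixed terms:
\begin{itemize}
\item $A=-\langle(\mathbf{K}-\rho\mathbf{G}^k\mathbf{D}^k)\mathbf{x}^k,\theta\mathbf{G}^k\mathbf{K}\mathbf{s}^k\rangle$, from $\frac12\|\mathbf{x}^{k+1}\|^2_{\mathbf{K}}$;
\item $B=\frac{\rho}{\bar\varepsilon}\langle\mathbf{s}^k,\tilde{\mathbf{D}}^k\mathbf{x}^k\rangle$, from the dual piece;
\item $C=-\rho\langle\mathbf{D}^k\mathbf{G}^k\mathbf{x}^k,\mathbf{s}^k\rangle$, from the increment of $\langle\mathbf{x},\mathbf{K}\mathbf{s}\rangle$.
\end{itemize}
By \eqref{tildeD range}, $B$ is, at leading order, $-C$ up to an eigenvalue-wise factor in $[\underline\varepsilon/\bar\varepsilon,1]$. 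That pairing is what the cross term in $V$ is built for, so $A$ survives in full. $A$ could not be matched against $B$ anyway: $\theta\mathbf{G}^k\mathbf{K}$ and $\rho\mathbf{D}^k\mathbf{G}^k$ are different operators, and \eqref{range rho} even forces $\rho\lambda_{N-1}^{\mathbf{D}^k}>(2+\theta)^2/4\ge 2\theta$.

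The paper makes $B+C=0$ exact. It first bounds $\frac{1}{2\bar\varepsilon}\|\mathbf{s}^{k+1}\|^2_{\mathbf{K}}\le\frac12\|\mathbf{s}^{k+1}\|^2_{\mathbf{G}^k\mathbf{D}^k\mathbf{Q}^k}$ via \eqref{GkDkQk}, so that $\mathbf{G}^k\mathbf{D}^k\mathbf{Q}^k\rho\tilde{\mathbf{D}}^k=\rho\mathbf{D}^k\mathbf{G}^k$. At the end it converts back using $\frac{1}{\underline\varepsilon}\le\frac{1}{\bar\varepsilon}+\frac{\theta\lambda_2^{\mathbf{G}^k}}{4\kappa_{\mathbf{G}}^k}$ from \eqref{range underline varepsilon}. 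Your last paragraph is a workable variant of this: keep the weight $\frac{1}{\bar\varepsilon}\mathbf{K}$ and bound a residual of size $(1-\underline\varepsilon/\bar\varepsilon)\rho\mathbf{D}^k\mathbf{G}^k$. But it applies to the pair $B,C$, not to the $\theta$-term.

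\textbf{Consequences for the constants.} $A$ must be paid for by Young's inequality at first order, giving $\theta\|\mathbf{x}^k\|^2_{\mathbf{G}^k\mathbf{K}}+\frac{\theta}{4}\|\mathbf{s}^k\|^2_{\mathbf{G}^k\mathbf{K}}$. The first part is the $\theta$-linear term of $\frac{4+4\theta+\theta^2}{4\kappa_{\mathbf{G}}^k}$ in $\epsilon_1^k$ and the reason for the lower bound \eqref{range rho} on $\rho$. The second part, together with the analogous Young costs of the gradient-difference terms, is why only $\epsilon_3^k=\theta/(4\kappa_{\mathbf{G}}^k)$ of the $-\theta\|\mathbf{s}^k\|^2_{\mathbf{G}^k\mathbf{K}}$ descent survives.

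Your accounting misplaces three further items:
\begin{itemize}
\item The factor $1-\bar\varepsilon$ in $\epsilon_1^k$ does not pay for a cancellation residual. It pays for the positive first-order term $\rho\|\mathbf{x}^k\|^2_{\tilde{\mathbf{D}}^k}\le\bar\varepsilon\rho\|\mathbf{x}^k\|^2_{\mathbf{D}^k\mathbf{G}^k}$ generated by the cross-term increment, which your sketch omits.
\item The objective-part leftover $\frac{\zeta^k\bar M^2}{2}\|\mathbf{x}^k\|^2_{\mathbf{K}}$ scales with $\zeta^k\ge\lambda_2^{\mathbf{G}^k}$. It therefore cannot be absorbed into the $(\frac12+\frac1\theta)\bar M^2\lambda_2^{\mathbf{G}^k}$ contribution. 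The paper applies \eqref{zetak leq theta2} exactly here, turning it into $\frac{\theta^2\lambda_2^{\mathbf{G}^k}}{4\kappa_{\mathbf{G}}^k}\|\mathbf{x}^k\|^2_{\mathbf{K}}$, which is the $\theta^2$ part of $\epsilon_1^k$.
\item The $1/\lambda_2^{\mathbf{G}^k}$ in $\xi_2^k/\lambda_2^{\mathbf{G}^k}$ does not come from \eqref{zetak leq theta2}. It comes from pairing $\mathbf{g}_a^{k+1}-\mathbf{g}_a^k$ with $\mathbf{s}^k$ and $\mathbf{x}^k$ under the weight $\mathbf{G}^k$, which leaves the inverse weight $(\mathbf{G}^k)^{-1}$ on the gradient difference.
\end{itemize}

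With these corrections your plan is essentially the paper's proof.
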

\begin{proof}
	We first bound each term of the definition of $V^{k+1}$ (given by \eqref{def V}).
	
	For the first term of the definition of $V^{k+1}$, since $\lambda_1^\mathbf{K} =1$, $\mathbf{K}^2=\mathbf{K}$, \eqref{zk final}, \eqref{xk+1 final}, \eqref{KDk=DkK}, \eqref{Gk range} and \eqref{g0k-gk}, we have
\begin{align}
	&\frac{1}{2}\|\mathbf{x}^{k+1}\|_{\mathbf{K}}^2 
	\underset{\eqref{xk+1 final}}{\overset{\eqref{zk final}}{=}}  \frac{1}{2}\|\mathbf{x}^k - \mathbf{G}^k(\rho \mathbf{D}^k \mathbf{x}^k + \theta\mathbf{q}^k +\mathbf{g}_a^k + \mathbf{g}^k-\mathbf{g}_a^k)\|_{\mathbf{K}}^2 \notag\\
	= & \frac{1}{2}\|\mathbf{x}^k\|_{\mathbf{K}}^2-\|\mathbf{x}^k\|_{\rho \mathbf{D}^k\mathbf{G}^k-\frac{1}{2}(\rho {\mathbf{D}^k}{\mathbf{G}^k})^2} - \langle (\mathbf{I}-\rho \mathbf{D}^k\mathbf{G}^k)\mathbf{x}^k,\mathbf{G}^k\mathbf{K}(\theta \mathbf{s}^k+\mathbf{g}^k-\mathbf{g}_a^k) \rangle \notag\\
	& +\frac{1}{2}\|\mathbf{G}^k(\theta\mathbf{s}^k+\mathbf{g}^k-\mathbf{g}_a^k)\|^2_{\mathbf{K}} \notag\\
	\overset{\eqref{KDk=DkK}}{\leq} & \frac{1}{2}\|\mathbf{x}^k\|_{\mathbf{K}}^2-\|\mathbf{x}^k\|_{\rho \mathbf{D}^k\mathbf{G}^k-\frac{1}{2}(\rho {\mathbf{D}^k}{\mathbf{G}^k})^2} +\|\mathbf{x}^k\|_{\theta\mathbf{G}^k\mathbf{K}}^2\notag\\
	&+\frac{1}{4}\|\mathbf{s}^k\|_{\theta\mathbf{G}^k\mathbf{K}}^2+\frac{1}{2}\|\mathbf{x}^k\|^2_{\mathbf{G}^k\mathbf{K}}+\frac{1}{2}\|\mathbf{g}^k-\mathbf{g}^k_a\|^2_{\mathbf{G}^k\mathbf{K}}\notag\\
	& +\frac{1}{2}\|\mathbf{x}^k\|^2_{(\rho{\mathbf{D}^k}{\mathbf{G}^k})^2}+\frac{1}{2}\|\mathbf{s}^k\|^2_{(\theta{\mathbf{G}^k})^2\mathbf{K}}+\frac{1}{2}\|\mathbf{x}^k\|^2_{(\rho{\mathbf{D}^k}{\mathbf{G}^k})^2}\notag\\
	&+\frac{1}{2}\|\mathbf{g}^k-\mathbf{g}^k_a\|^2_{({\mathbf{G}^k})^2\mathbf{K}} + \|\mathbf{s}^k\|^2_{(\theta{\mathbf{G}^k})^2\mathbf{K}}+\|\mathbf{g}^k-\mathbf{g}^k_a\|^2_{({\mathbf{G}^k})^2\mathbf{K}} \notag\\
	\underset{\eqref{Gk range}}{\overset{\eqref{g0k-gk}}{\leq}} & \frac{1}{2}\|\mathbf{x}^k\|_{\mathbf{K}}^2-\|\mathbf{x}^k\|_{\rho \mathbf{D}^k\mathbf{G}^k-\frac{1+2\theta}{2}\mathbf{G}^k\mathbf{K}-\frac{3}{2}(\rho\lambda_1^{\mathbf{D}^k}\lambda_2^{\mathbf{G}^k})^2\mathbf{K}}\notag\\
	& +\frac{\bar{M}^2}{2}\|\mathbf{x}^k\|^2_{\big(\lambda_2^{\mathbf{G}^k}+3(\lambda_2^{\mathbf{G}^k})^2\big)\mathbf{K}}+\|\mathbf{s}^k\|^2_{(\frac{\theta}{4}{\mathbf{G}^k}+\frac{3}{2}(\theta\lambda_2^{\mathbf{G}^k})^2)\mathbf{K}}.
	\label{norm xk+1}
\end{align}

For the second term of the definition of $V^{k+1}$, since \eqref{zk final}--\eqref{qk+1 final}, \eqref{QktildeDk=tildeDkQk}, \eqref{tildeD range}, \eqref{GkDkQk} and \eqref{range underline varepsilon}--\eqref{theta rho tildeDk Gk}, we obtain
\begin{align}
	&\frac{1}{2\bar{\varepsilon}} \|\mathbf{s}^{k+1}\|^2_{\bK} \overset{\eqref{GkDkQk}}{\leq} \frac{1}{2} \|\mathbf{s}^{k+1}\|^2_{ \mathbf{G}^k\mathbf{D}^k\mathbf{Q}^k} \notag\\
	=& \frac{1}{2}\|\mathbf{q}^k+\frac{1}{\theta}\mathbf{g}_a^k+\frac{1}{\theta}(\mathbf{g}_a^{k+1}-\mathbf{g}_a^{k})+\rho \tilde{\mathbf{D}}^k\Big(( \mathbf{I}-\rho \mathbf{D}^k\bG^k)\mathbf{x}^k-\bG^k\big(\theta \mathbf{s}^k+(\mathbf{g}^k-\mathbf{g}_a^k)\big)\Big)\|^2_{ \mathbf{G}^k\mathbf{D}^k\mathbf{Q}^k}\notag\\
	\overset{\eqref{QktildeDk=tildeDkQk}}{=}& \frac{1}{2}\|(\mathbf{I}-\theta\rho\tilde{\mathbf{D}}^k \bG^k)\mathbf{s}^{k} + \frac{1}{\theta}(\mathbf{g}_a^{k+1}-\mathbf{g}_a^k)+\rho\tilde{\mathbf{D}}^k(\mathbf{I}-\rho \mathbf{D}^k\bG^k)\mathbf{x}^k - \rho\tilde{\mathbf{D}}^k \bG^k(\mathbf{g}^k-\mathbf{g}_a^k)\|^2_{ \mathbf{G}^k\mathbf{D}^k\mathbf{Q}^k} \notag\\
	\underset{\eqref{tildeD range}}{\overset{\eqref{GkDkQk}}{=}} & \frac{1}{2}\|(\mathbf{I}-\theta\rho\tilde{\mathbf{D}}^k \bG^k)\mathbf{s}^{k}\|^2_{ \mathbf{G}^k\mathbf{D}^k\mathbf{Q}^k} \notag\\
    &+\langle (\mathbf{I}\!-\!\theta\rho \tilde{\mathbf{D}}^k\bG^k)\mathbf{s}^k,\rho \mathbf{D}^k \mathbf{G}^k( \mathbf{I}\!-\!\rho \mathbf{D}^k\bG^k)\mathbf{x}^k \!+\!\frac{1}{\theta} \mathbf{G}^k\mathbf{D}^k\mathbf{Q}^k(\mathbf{g}_a^{k+1}\!-\!\mathbf{g}_a^k)\!-\! \rho \mathbf{D}^k(\mathbf{G}^k)^2(\mathbf{g}^k\!-\!\mathbf{g}_a^k)\rangle\notag\\
    &+\frac{1}{2}\|\bar{\varepsilon}\rho{\mathbf{D}}^k\bG^k(\mathbf{I}-\rho \mathbf{D}^k\bG^k)\mathbf{x}^k + \frac{1}{\theta}(\mathbf{g}_a^{k+1}-\mathbf{g}_a^k) - \rho\tilde{\mathbf{D}}^k \bG^k(\mathbf{g}^k-\mathbf{g}_a^k)\|^2_{\mathbf{G}^k\mathbf{D}^k\mathbf{Q}^k}\notag\\
	\underset{\eqref{rho tildeDk Gk preceq}}{\overset{\eqref{rhoDkGk}}{\leq}} & \frac{1}{2}\|\mathbf{s}^{k}\|^2_{ \mathbf{G}^k\mathbf{D}^k\mathbf{Q}^k}+ \langle (\mathbf{I}-\theta\rho \tilde{\mathbf{D}}^k\bG^k)\mathbf{s}^k, \rho \mathbf{D}^k\mathbf{G}^k \mathbf{x}^k \rangle \notag\\
	&+ \frac{1}{2}\|(\mathbf{I}-\theta\rho \tilde{\mathbf{D}}^k\bG^k)\mathbf{s}^{k}\|^2_{ (\rho \mathbf{D}^k\bG^k)^2 } + \frac{1}{2}\|\mathbf{x}^k\|^2_{ (\rho \mathbf{D}^k\mathbf{G}^k)^2 } \notag\\
	&+ \frac{\theta}{4}\|(\mathbf{I}-\theta\rho \tilde{\mathbf{D}}^k\bG^k)\mathbf{s}^k\|^2_{\mathbf{G}^k\mathbf{K}} + \frac{1}{\theta^3}\|\mathbf{g}_a^{k+1}-\mathbf{g}_a^k\|^2_{\mathbf{G}^k(\mathbf{D}^k\mathbf{Q}^k)^2}\notag\\
	&+ \frac{1}{2}\|(\mathbf{I}-\theta\rho \tilde{\mathbf{D}}^k\bG^k)\mathbf{s}^k\|^2_{(\mathbf{G}^k)^2\mathbf{K}} + \frac{1}{2}\|\mathbf{g}^k-\mathbf{g}_a^k\|^2_{(\rho\mathbf{D}^k\bG^k )^2} \notag\\
	& +\frac{3\bar{\varepsilon}^2}{2}\|\mathbf{x}^k\|^2_{(\rho\mathbf{D}^k\mathbf{G}^k)^2\mathbf{G}^k\mathbf{D}^k\mathbf{Q}^k} + \frac{3}{2\theta^2}\|\mathbf{g}_a^{k+1}-\mathbf{g}_a^k\|^2_{\mathbf{G}^k\mathbf{D}^k\mathbf{Q}^k}+ \frac{3\bar{\varepsilon}^2}{2}\|\mathbf{g}^k-\mathbf{g}_a^k\|^2_{(\mathbf{G}^k)^3\mathbf{D}^k\mathbf{Q}^k}\notag\\
	\underset{\eqref{range underline varepsilon}}{\overset{\eqref{theta rho tildeDk Gk}}{\leq}}& \frac{1}{2\bar{\varepsilon}}\|\mathbf{s}^{k}\|^2_{\bK}+ \langle (\mathbf{I}-\theta\rho \tilde{\mathbf{D}}^k\bG^k)\mathbf{s}^k, \rho \mathbf{D}^k\mathbf{G}^k \mathbf{x}^k \rangle \notag\\
	& + \|\mathbf{s}^k\|^2_{\frac{\theta\lambda_2^{\bG^k}}{8\kappa_\bG^k}\bK+ (\frac{1}{4}\mathbf{G}^k+\frac{1}{2}(\mathbf{G}^k)^2)\mathbf{K}+\frac{1}{2}(\rho\mathbf{D}^k\mathbf{G}^k)^2}\notag\\
	&+\|\mathbf{x}^k\|^2_{\frac{1}{2}(1+3\bar{\varepsilon}+\frac{3\bar{\varepsilon}^2}{4})(\rho \mathbf{D}^k\mathbf{G}^k)^2} \notag\\
	&+ \frac{1}{\theta^2}(\frac{1}{4}+\frac{1}{\bar{\varepsilon}})\big((\frac{1}{4}+\frac{1}{\bar{\varepsilon}})\frac{\kappa_{\bG}^k}{\theta\lambda_2^{\bG^k}}+\frac{3}{2}\big) \|\mathbf{g}_a^{k+1}-\mathbf{g}_a^k\|^2\notag\\
	&+\frac{1}{2}\|\mathbf{g}^k-\mathbf{g}_a^k\|^2_{(\rho\mathbf{D}^k\mathbf{G}^k)^2+3\bar{\varepsilon}(\frac{\bar{\varepsilon}}{4}+1)(\bG^k)^2\mathbf{K}}.
	\label{norm sk+1 original}
\end{align}

For the third term of the definition of $V^{k+1}$, since \eqref{zk final}--\eqref{qk+1 final}, together with \eqref{rhoDkGk}--\eqref{theta rho tildeDk Gk} and \eqref{tildeD range}, we have
\begin{align}
	& \langle \mathbf{x}^{k+1}, \mathbf{K}\mathbf{s}^{k+1} \rangle \notag\\
	=& \langle (\mathbf{I}-\rho\mathbf{D}^k\mathbf{G}^k)\mathbf{x}^{k} - \theta\mathbf{G}^k\mathbf{s}^k- \mathbf{G}^k(\mathbf{g}^k-\mathbf{g}_a^k),  \notag\\
	&\mathbf{K}\big((\mathbf{I}-\theta\rho\tilde{\mathbf{D}}^k\mathbf{G}^k)\mathbf{s}^k+ \rho \tilde{\mathbf{D}}^k(\mathbf{I}-\rho \mathbf{D}^k\mathbf{G}^k)\mathbf{x}^k+\frac{1}{\theta}(\mathbf{g}_a^{k+1}-\mathbf{g}_a^k)-\rho\tilde{\mathbf{D}}^k\mathbf{G}^k(\mathbf{g}^k-\mathbf{g}_a^k) \big) \rangle\notag\\
	=& \langle \mathbf{x}^k, \mathbf{K}\mathbf{s}^{k} \rangle - \langle \rho\mathbf{D}^k\mathbf{G}^k\mathbf{x}^k,(\mathbf{I}-\theta\rho\tilde{\mathbf{D}}^k\mathbf{G}^k)\mathbf{s}^k \rangle\notag\\
	&-\langle \mathbf{x}^k,\theta\rho\tilde{\mathbf{D}}^k\mathbf{G}^k\mathbf{s}^k\rangle + \|\mathbf{x}^k\|^2_{\rho \tilde{\mathbf{D}}^k(\mathbf{I}-\rho\mathbf{D}^k\mathbf{G}^k)^2}\notag\\
	&+\langle \mathbf{K}(\mathbf{I}-\rho\mathbf{D}^k\mathbf{G}^k)\mathbf{x}^k,\frac{1}{\theta}(\mathbf{g}_a^{k+1}-\mathbf{g}_a^k)-\rho\tilde{\mathbf{D}}^k\mathbf{G}^k(\mathbf{g}^k-\mathbf{g}_a^k)\rangle\notag\\
	& -\|\mathbf{s}^k\|^2_{\theta \mathbf{G}^k\mathbf{K}-\theta^2\rho\tilde{\mathbf{D}}^k(\mathbf{G}^k)^2}+\|\mathbf{g}^k-\mathbf{g}_a^k\|^2_{\rho \tilde{\mathbf{D}}^k(\mathbf{G}^k)^2} \notag\\
    &-\langle \theta\mathbf{G}^k\mathbf{K}\mathbf{s}^k,\frac{1}{\theta}(\mathbf{g}_a^{k+1}-\mathbf{g}_a^k) +\rho\tilde{\mathbf{D}}^k(\mathbf{I}-\rho\mathbf{D}^k\mathbf{G}^k)\mathbf{x}^k- \rho\tilde{\mathbf{D}}^k\mathbf{G}^k(\mathbf{g}^k-\mathbf{g}_a^k)\rangle \notag\\
	& -\langle \mathbf{G}^k\mathbf{K}(\mathbf{g}^k-\mathbf{g}_a^k), (\mathbf{I}-\theta\rho\tilde{\mathbf{D}}^k\mathbf{G}^k)\mathbf{s}^k +\frac{1}{\theta}(\mathbf{g}_a^{k+1}-\mathbf{g}_a^k) + \rho \tilde{\mathbf{D}}^k(\mathbf{I}-\rho \mathbf{D}^k\mathbf{G}^k)\mathbf{x}^k \rangle \notag\\
	\underset{\eqref{theta rho tildeDk Gk}}{\overset{\eqref{rhoDkGk}}{\leq}} & \langle \mathbf{x}^k, \mathbf{K}\mathbf{s}^{k} \rangle - \langle \rho\mathbf{D}^k\mathbf{G}^k\mathbf{x}^k,(\mathbf{I}-\theta\rho\tilde{\mathbf{D}}^k\mathbf{G}^k)\mathbf{s}^k \rangle\notag\\
	&+\frac{1}{2}\|\mathbf{x}^k\|^2_{(\rho \tilde{\mathbf{D}}^k)^2}+\frac{1}{2}\|\mathbf{s}^k\|^2_{(\theta\bG^k)^2\bK}+ \|\mathbf{x}^k\|^2_{\rho \tilde{\mathbf{D}}^k}\notag\\
	& +\frac{1}{2}\|\mathbf{x}^k\|^2_{\mathbf{G}^k\mathbf{K}}+\frac{1}{2\theta^2}\|\mathbf{g}_a^{k+1}-\mathbf{g}_a^k\|^2_{(\mathbf{G}^k)^{-1}} +\frac{1}{2}\|\mathbf{x}^k\|^2_{\rho\tilde{\mathbf{D}}^k\bG^k}\notag\\
	& +\frac{1}{2}\|\mathbf{g}^k-\mathbf{g}^k_a\|^2_{\rho\tilde{\mathbf{D}}^k\bG^k}-\|\mathbf{s}^k\|^2_{\theta \mathbf{G}^k\mathbf{K}-\theta^2\rho\tilde{\mathbf{D}}^k(\mathbf{G}^k)^2} +\|\mathbf{g}^k-\mathbf{g}_a^k\|^2_{\rho \tilde{\mathbf{D}}^k(\mathbf{G}^k)^2} \notag\\
	&+\frac{1}{2}\|\mathbf{s}^k\|^2_{(\theta \mathbf{G}^k)^2\mathbf{K}} +\frac{1}{2\theta^2}\|\mathbf{g}_a^{k+1}-\mathbf{g}_a^k\|^2 + \frac{1}{2}\|\mathbf{s}^k\|^2_{(\theta\mathbf{G}^k)^2\bK}\notag\\
	& +\frac{1}{2}\|\mathbf{x}^k\|^2_{(\rho \tilde{\mathbf{D}}^k)^2} +\frac{1}{2}\|\mathbf{s}^k\|^2_{(\theta \mathbf{G}^k)^2\mathbf{K}} + \frac{1}{2}\|\mathbf{g}^k-\mathbf{g}_a^k\|^2_{(\rho \tilde{\mathbf{D}}^k\bG^k)^2}\notag\\
	&+\frac{1}{\theta}\|\mathbf{g}^k-\mathbf{g}_a^k\|^2_{\mathbf{G}^k\mathbf{K}}+\frac{\theta}{4}\|\mathbf{s}^k\|^2_{\mathbf{G}^k\mathbf{K}}+\frac{1}{2}\|\mathbf{g}^k-\mathbf{g}_a^k\|^2_{\rho \tilde{\mathbf{D}}^k\mathbf{G}^k} \notag\\
	&  + \frac{1}{2}\|\mathbf{x}^k\|^2_{\rho \tilde{\mathbf{D}}^k\mathbf{G}^k} +\frac{1}{2}\|\mathbf{g}^k-\mathbf{g}_a^k\|^2_{(\mathbf{G}^k)^2\mathbf{K}} +\frac{1}{2\theta^2}\|\mathbf{g}_a^{k+1}-\mathbf{g}_a^k\|^2\notag\\
	\underset{\eqref{tildeD range}}{\overset{\eqref{rho tildeDk Gk preceq}}{\leq}}& \langle \mathbf{x}^k, \mathbf{K}\mathbf{s}^{k} \rangle - \langle \rho\mathbf{D}^k\mathbf{G}^k\mathbf{x}^k,(\mathbf{I}-\theta\rho\tilde{\mathbf{D}}^k\mathbf{G}^k)\mathbf{s}^k \rangle\notag\\
	& + \|\bx^k\|^2_{\bar{\varepsilon}\rho\bD^k\bG^k+\frac{1}{2}\bG^k\bK+(\bar{\varepsilon}\rho\bD^k\bG^k)^2}\notag\\
	&-\|\bs^k\|^2_{\mathbf{G}^k(\frac{3\theta}{4}\mathbf{I}-\theta^2(2+\bar{\varepsilon})\bG^k)\bK}+\frac{1}{2\theta^2}(\frac{\kappa_\bG^k}{\lambda_2^{\bG^k}}+2)\|\bg_a^{k+1}-\bg_a^k\|^2\notag\\
	&+\|\bg^k-\bg_a^k\|^2_{\frac{1}{\theta}\bG^k\bK+\bar{\varepsilon}\rho\bD^k(\bG^k)^2+\frac{1}{2}(1+2\bar{\varepsilon}+\bar{\varepsilon}^2)(\bG^k)^2\bK}.
	\label{xk+1, sk+1 original}
\end{align}

For the last term of the definition of $V^{k+1}$, since $f^* = \tilde{f}^*$, $\mathbf{J}=\mathbf{J}^2$, the descent lemma, the Cauchy-Schwarz inequality, \eqref{zetak leq theta2} and \eqref{bar g0k- bar gk}, we obtain
\vspace{-0.1cm}
\begin{align}
	&f(\bar{x}^{k+1}) - f^*=  \tilde{f}(\bar{\mathbf{x}}^{k}) - \tilde{f}^* + \tilde{f}(\bar{\mathbf{x}}^{k+1}) - \tilde{f}(\bar{\mathbf{x}}^{k}) \notag\\
	\leq & \tilde{f}(\bar{\mathbf{x}}^{k}) - \tilde{f}^* - \langle {\zeta^k}\bar{\mathbf{g}}^k, \bar{\mathbf{g}}_a^k \rangle + \frac{1}{2}(\zeta^k)^2\bar{M}\|\bar{\mathbf{g}}^k\|^2\notag\\
	= & f(\bar{x}^k) -f^* - \frac{1}{2}{\zeta^k}\langle \bar{\mathbf{g}}^k, \bar{\mathbf{g}}^k +\bar{\mathbf{g}}_a^k -\bar{\mathbf{g}}^k \rangle \notag\\
	&- \frac{1}{2}{\zeta^k}\langle \bar{\mathbf{g}}^k -\bar{\mathbf{g}}_a^k+\bar{\mathbf{g}}_a^k, \bar{\mathbf{g}}^k_a  \rangle + \frac{1}{2}(\zeta^k)^2\bar{M}\|\bar{\mathbf{g}}^k\|^2\notag\\
	\leq & f(\bar{x}^k) -f^* - \frac{{\zeta^k}}{4}\|\bar{\mathbf{g}}^k\|^2 + \frac{{\zeta^k}}{2} \|\bar{\mathbf{g}}^k -\bar{\mathbf{g}}_a^k\|^2\notag\\
	&  - \frac{{\zeta^k}}{4}\|\bar{\mathbf{g}}_a^k\|^2+ \frac{1}{2}(\zeta^k)^2\bar{M}\|\bar{\mathbf{g}}^k\|^2 \notag\\
	\underset{\eqref{zetak leq theta2}}{\overset{\eqref{bar g0k- bar gk}}{\leq}} & f(\bar{x}^k) -f^* - \frac{{\zeta^k}}{4}(1-2{\zeta^k}\bar{M})\|\bar{\mathbf{g}}^k\|^2 \notag\\
	&- \frac{{\zeta^k}}{4}\|\bar{\mathbf{g}}_a^k\|^2 + \frac{\theta^2}{4\kappa_{\mathbf{G}}^k}\lambda_{2}^{\mathbf{G}^k}\|\mathbf{x}^k\|^2_{\mathbf{K}}.
	\label{fk+1-f*}
\end{align}

By incorporating \eqref{norm xk+1}--\eqref{fk+1-f*} into the expression of $V^{k+1}-V^k$ resulting from the definition in \eqref{def V} and using \eqref{g0k-gk} and \eqref{g0k+1-g0k}, we obtain \eqref{tilde Vk+1-tilde Vk}.
\end{proof}

(\romannumeral5) Ultimately, we illustrate how the sequence in \eqref{tilde Vk+1-tilde Vk} descends along iterations based on the well-defined parameters in \eqref{epsilon1-epsilon5}--\eqref{range underline varepsilon}.

From $0 < \lambda_{2}^{\mathbf{G}^k} < \min\{\frac{{\epsilon_1^k}}{{\epsilon_2^k}}, \frac{{\epsilon_3^k}}{\epsilon_4^k}\}$ in \eqref{range hat lambda Gk},
\begin{align}
	\label{epsilon3-epsilon4} & \lambda_{2}^{\mathbf{G}^k} ({\epsilon_1^k} - {\epsilon_2^k} \lambda_{2}^{\mathbf{G}^k})> 0 ,\\
    &\lambda_{2}^{\mathbf{G}^k}({\epsilon_3^k} - \epsilon_4^k \lambda_{2}^{\mathbf{G}^k})> 0.
\end{align}
From $\lambda_{2}^{\mathbf{G}^k}<{\zeta^k}<\frac{1}{4\epsilon_{5}^k}$ in \eqref{range zetak}, we obtain
\begin{align}
	{\zeta^k}(\frac{1}{4} - {\epsilon_{5}^k} {\zeta^k}) >0 .\label{1/4-epsilon5}
\end{align}
By combining \eqref{epsilon3-epsilon4}--\eqref{1/4-epsilon5} with \eqref{tilde Vk+1-tilde Vk}, we show that $V^{k+1}\leq V^k$, and thus the sequence of $V^k$ is nonincreasing.

\subsection{Proof of convergence results of UPP-MC}
\subsubsection{Proof of Theorem~\ref{theorem nonconvex}}\label{proof theorem nonconvex}
First, for each $k\geq 0$, we define
\begin{align}
	\hat{V}^k =& \|\mathbf{x}^k\|_\mathbf{K}^2 + \|\mathbf{s}^k\|^2_\mathbf{K} + f(\bar{x}^k) - f^*. \label{hatVk}\\
	\label{Wk}W^k =& \|\mathbf{x}^{k}\|^2_{\mathbf{K}} +  \|\mathbf{s}^k\|^2_\mathbf{K} + \|\bar{\mathbf{g}}^k\|^2 + \|\bar{\mathbf{g}}^k_a\|^2.
\end{align}
We also define $\xi_3=\frac{1}{2}\big(\frac{1}{\bar{\varepsilon}}-1+\sqrt{(\frac{1}{\bar{\varepsilon}}-1)^2+4}\big)$, which guarantees ${\frac{1}{2}}+\frac{\xi_3}{2}={\frac{1}{2\bar{\varepsilon}}}+\frac{1}{2{\xi_3}}$. It then follows from \eqref{GkDkQk} and \eqref{def V} that
\begin{align}
	V^k \leq & {\frac{1}{2}} \|\mathbf{x}^k\|^2_\mathbf{K} + {\frac{1}{2\bar{\varepsilon}}} \|\mathbf{s}^k\|^2_\mathbf{K} + \frac{{\xi_3}}{2}\|\mathbf{x}^k\|^2_\mathbf{K}+ \frac{1}{2{\xi_3}}\|\mathbf{s}^k\|^2_\mathbf{K} + f(\bar{x}^k) - f^* \notag\\
	= & {\frac{1+{\xi_3}}{2}}(\|\mathbf{x}^k\|^2_\mathbf{K} + \|\mathbf{s}^k\|^2_\mathbf{K}) +f(\bar{x}^k) - f^*\leq  {\delta_1} \hat{V}^k, \label{delta2}
\end{align}
where ${\delta_1} = \max\{\frac{1+\xi_3}{2},1\}$. Here, we define $\xi_4:=\frac{1}{2}(1-\frac{1}{\bar{\varepsilon}}+\sqrt{(1-\frac{1}{\bar{\varepsilon}})^2+4})$, and have ${\frac{1}{2}}-\frac{\xi_4}{2}={\frac{1}{2\bar{\varepsilon}}}-\frac{1}{2{\xi_4}}$. Because $0<\bar{\varepsilon}<1$ leads to $0<\xi_4<\frac{1}{2}\big(1-\frac{1}{\bar{\varepsilon}}+\sqrt{(1-\frac{1}{\bar{\varepsilon}})^2+\frac{4}{\bar{\varepsilon}}}\big)=1$, together with \eqref{GkDkQk} and \eqref{def V}, we obtain
\begin{align}
	V^k \geq & {\frac{1}{2}} \|\mathbf{x}^k\|^2_\mathbf{K} + {\frac{1}{2\bar{\varepsilon}}} \|\mathbf{s}^k\|^2_\mathbf{K} - \frac{{\xi_4}}{2}\|\mathbf{x}^k\|^2_\mathbf{K} - \frac{1}{2{\xi_4}}\|\mathbf{s}^k\|^2_\mathbf{K} + f(\bar{x}^k) - f^* \notag\\
	= & {\frac{1-{\xi_4}}{2}}(\|\mathbf{x}^k\|^2_\mathbf{K} + \|\mathbf{s}^k\|^2_\mathbf{K})+f(\bar{x}^k) - f^*\geq  {\delta_2} \hat{V}^k, \label{deltassumption: optimality}
\end{align}
where $\delta_2:=\frac{1-{\xi_4}}{2}>0$.

Adding \eqref{tilde Vk+1-tilde Vk} from $k=0$ to $T-1$ and using \eqref{Wk} yields $V^{T} \leq V^0 - \delta_3 \sum_{k=0}^{T-1} W^k$. 
Combining $\hat{W}^k\leq W^k$, we have $V^{T} + \delta_3 \sum_{k=0}^{T-1} \hat{W}^k\leq V^{T} + \delta_3 \sum_{k=0}^{T-1} W^k \leq V^0$, where $\delta_3  = \min_{k\in(0,T)} \{\lambda_{2}^{\mathbf{G}^k}({\epsilon_1^k} - {\epsilon_2^k} \lambda_{2}^{\mathbf{G}^k}),\lambda_{2}^{\mathbf{G}^k}({\epsilon_3^k} - \epsilon_4^k \lambda_{2}^{\mathbf{G}^k}),{\zeta^k}(\frac{1}{4} - {\epsilon_{5}^k} {\zeta^k}),{\zeta^k}/4 \}.$
This, along with $V^{T}\geq 0$ due to \eqref{deltassumption: optimality}, gives $\sum_{k=0}^{T-1} \hat{W}^{k}\leq \frac{V^0}{\delta_3}$. It then follows from \eqref{delta2} that \eqref{theorem 1 sublinear} holds with $C_1 = \frac{{\delta_1} \hat{V}^0}{\delta_3 }$.


\subsubsection{Proof of Theorem~\ref{theorem pl}}\label{proof theorem pl}
From \eqref{hatVk} and \eqref{deltassumption: optimality},
\begin{equation}
	\|\mathbf{x}^k\|^2_{\mathbf{K}} + f(\bar{x}^k) - f^* \leq \hat{V}^k \leq {V^k}/{{\delta_2}}.\label{norm consensus+function}
\end{equation}
Due to Assumption~\ref{PL condition},
\begin{equation}
	\|\bar{\mathbf{g}}_a^k\|^2 = N\|\nabla f(\bar{x}^k)\|^2 \geq 2\nu N(f(\bar{x}^k) - f^*).\label{barg0k geq 2nu}
\end{equation}
Based on \eqref{epsilon3-epsilon4}, we have
\begin{equation}
	\delta_4 = \min_{k\geq 0} \{\lambda_{2}^{\mathbf{G}^k}({\epsilon_1^k} -  {\epsilon_2^k}\lambda_{2}^{\mathbf{G}^k}),\lambda_{2}^{\mathbf{G}^k}({\epsilon_3^k} - \epsilon_4^k\lambda_{2}^{\mathbf{G}^k}), \frac{\nu N{\zeta^k}}{2}\} > 0. \label{delta4}
\end{equation}
Then, from \eqref{tilde Vk+1-tilde Vk}, \eqref{1/4-epsilon5}, \eqref{barg0k geq 2nu} and \eqref{delta2}, we have
\begin{align}
	V^{k+1} \leq&  V^k- \|\mathbf{x}^k\|^2_{\lambda_{2}^{\mathbf{G}^k} ({\epsilon_1^k} - {\epsilon_2^k} \lambda_{2}^{\mathbf{G}^k})\mathbf{K} } - \|\mathbf{s}^k\|^2_{  \lambda_{2}^{\mathbf{G}^k}({\epsilon_3^k} - \epsilon_4^k \lambda_{2}^{\mathbf{G}^k})\mathbf{K} }\notag\\
	& - \frac{\nu N {\zeta^k}}{2} (f(\bar{x}^k) - f^*) \leq V^{k} - \delta_4 \hat{V}^k\leq V^k - \frac{\delta_4}{{\delta_1}}V^k.
	\label{tildeVk+1 leq -delta4}
\end{align}
Because $\delta_3 \leq \lambda_{2}^{\mathbf{G}^k}({\epsilon_3^k} - \epsilon_4^k \lambda_{2}^{\mathbf{G}^k})$ and $\delta_1\geq 1$, we have $\frac{\delta_4}{{\delta_1}}\leq \frac{(\epsilon_3^k)^2}{4\epsilon_4^k}<\frac{1}{96\kappa_\bG^k}<1$. Thus, combining \eqref{delta4} and \eqref{tildeVk+1 leq -delta4} yields
\vspace{-0.2cm}
\begin{equation}
	V^{k} \leq (1-\delta) V^{k-1} \leq (1-\delta)^{k} V^0 \leq (1-\delta)^{k} {\delta_1} \hat{V}^0, \notag
\end{equation}
where $\delta = \delta_4/\delta_1\in(0,1)$. This, along with \eqref{norm consensus+function}, implies \eqref{theorem 2 linear} with $C_2=\delta_1\hat{V}^0/\delta_2$.

\subsection{Proof of convergence results of UPP-SC}

\subsubsection{Proof of Lemma~\ref{lemma vk+1 - vk}}\label{proof lemma vk+1 - vk}
From \eqref{vk+1 opt}, \eqref{wk+1} and the first-order optimality condition \eqref{first-order opt modified}, we have
\vspace{-0.2cm}
\begin{align}
	&\mathbf{L}^{\frac{1}{2}}(\mathbf{v}^{k+1}-\mathbf{v}^k)=-(\nabla \tilde{f}(\mathbf{x}^k)-\nabla \tilde{f}(\mathbf{x}^{k-1}))-\mathbf{B}^k\mathbf{w}^{k+1}-(\mathbf{B}^{k}-\mathbf{B}^{k-1})(\mathbf{x}^k-\mathbf{x}^{k-1}), \quad \forall k \geq 0.\label{qk+1 - qk}
\end{align}
Due to \eqref{vk+1 opt}, \eqref{qk+1 - qk}, Assumption~\ref{assumption smooth} and $\mathbf{v}^{k+1}-\mathbf{v}^k \in \mathcal{S}^{{\perp}} \,\forall k\geq 0$,
\vspace{-0.2cm}
\begin{align}
	&\rho\|\mathbf{x}^{k+1}\|^2_{\mathbf{L}}=\frac{1}{\rho}\|\mathbf{v}^{k+1}-\mathbf{v}^k\|^2\notag \\ 
    \leq& \frac{1}{\rho\lambda_{N-1}^{\mathbf{L}}}\|\nabla \tilde{f}(\mathbf{x}^k)-\nabla \tilde{f}(\mathbf{x}^{k-1})+\mathbf{B}^k\mathbf{w}^{k+1}+(\mathbf{B}^{k}-\mathbf{B}^{k-1})(\mathbf{x}^k-\mathbf{x}^{k-1})\|^2  \notag \\
	\leq &\frac{3}{\rho\lambda_{N-1}^{\mathbf{L}}}\big(\|\nabla \tilde{f}(\mathbf{x}^k)-\nabla \tilde{f}(\mathbf{x}^{k-1})\|^2+\|\mathbf{w}^{k+1}\|^2_{(\mathbf{B}^k)^2}+ \|\mathbf{x}^k-\mathbf{x}^{k-1}\|^2_{(\mathbf{B}^{k}-\mathbf{B}^{k-1})^2}\big) \notag\\
	\leq & \frac{3}{\rho\lambda_{N-1}^{\mathbf{L}}}\big(\bar{M}^2\|\mathbf{x}^k-\mathbf{x}^{k-1}\|^2+\|\mathbf{w}^{k+1}\|^2_{(\mathbf{B}^k)^2}+ 4(\lambda_1^\mathbf{B})^2\|\mathbf{x}^k-\mathbf{x}^{k-1}\|^2\big) \notag\\
	\leq &3\tilde{\kappa} \big((c_{\mathbf{B}}+\bar{M}^2)\|\mathbf{x}^k-\mathbf{x}^{k-1}\|^2_{(\mathbf{B}^{k})^{-1}}+\|\mathbf{w}^{k+1}\|^2_{\mathbf{B}^k}\big), \label{rho xk+1 leq 3 tildekappa}
\end{align}
where $c_{\mathbf{B}}=4(\lambda_1^\mathbf{B})^2$ and $\tilde{\kappa} = {\lambda_1^{\mathbf{B}}}/({\rho \lambda_{N-1}^{\mathbf{L}}})$. 

\subsubsection{Proof of Lemma~\ref{lemma ALk+1 - ALk}}\label{proof lemma ALk+1 - ALk}
Due to Assumption~\ref{assumption smooth}, \eqref{xk+1 opt}, \eqref{AL} and $\mathbf{G}^k=(\mathbf{B}^k+\rho \mathbf{L})^{-1}$, we have
\begin{align}
	& \operatorname{AL}(\mathbf{x}^{k+1},\mathbf{v}^{k})-\operatorname{AL}(\mathbf{x}^{k},\mathbf{v}^{k})\notag\\
	{\leq}& \langle \nabla \tilde{f}(\mathbf{x}^k)+\mathbf{L}^{\frac{1}{2}}\mathbf{v}^k,\mathbf{x}^{k+1}-\mathbf{x}^k \rangle + \frac{\bar{M}}{2}\|\mathbf{x}^{k+1}-\mathbf{x}^k\|^2  + \frac{\rho}{2}\|\mathbf{x}^{k+1}\|^2_\mathbf{L} - \frac{\rho}{2}\|\mathbf{x}^{k}\|^2_\mathbf{L} \notag\\
	=& -\|\mathbf{x}^{k+1}-\mathbf{x}^k\|^2_{(\mathbf{G}^k)^{-1}-\frac{\bar{M}}{2}\mathbf{I}_{Nd}-\frac{1}{2}\rho \mathbf{L}}\notag\\
	=& -\|\mathbf{x}^{k+1}-\mathbf{x}^k\|^2_{\mathbf{B}^k+\frac{1}{2}\rho \mathbf{L}-\frac{\bar{M}}{2}\mathbf{I}_{Nd}}. \label{ALxk+1 vk}
\end{align}

From \eqref{vk+1 opt} and \eqref{ALxk+1 vk},
\vspace{-0.2cm}
\begin{align*}
	\operatorname{AL}^{k+1} - \operatorname{AL}^k =
    & \operatorname{AL}(\mathbf{x}^{k+1},\mathbf{v}^{k})-\operatorname{AL}(\mathbf{x}^{k},\mathbf{v}^{k}) + \langle \mathbf{L}^{\frac{1}{2}}(\mathbf{v}^{k+1}-\mathbf{v}^k),\mathbf{x}^{k+1}\rangle \notag\\
	{\leq} & -\|\mathbf{x}^{k+1}-\mathbf{x}^{k}\|^2_{\mathbf{B}^k+\frac{1}{2}\rho \mathbf{L} -\frac{\bar{M}}{2}\mathbf{I}_{Nd}} + \rho\|\mathbf{x}^{k+1}\|^2_\mathbf{L} .
\end{align*}
Hence, we obtain \eqref{ALk+1 - ALk lemma}.

\subsubsection{Proof of Lemma~\ref{lemma tildePk+1 nonincreasing}}\label{proof lemma tildePk+1 nonincreasing}
From \eqref{first-order opt modified}, \eqref{vk+1 opt} and Assumption~\ref{smooth}, we derive
\vspace{-0.2cm}
\begin{align}
	0=& -\langle \mathbf{B}^k (\mathbf{x}^{k+1}-\mathbf{x}^k)-\mathbf{B}^{k-1}(\mathbf{x}^{k}-\mathbf{x}^{k-1})+\nabla \tilde{f}(\mathbf{x}^k) \notag\\
    &-\nabla \tilde{f}(\mathbf{x}^{k-1}) + \mathbf{L}^{\frac{1}{2}}(\mathbf{v}^{k+1}-\mathbf{v}^k), \mathbf{x}^{k+1}-\mathbf{x}^k-(\mathbf{x}^{k}-\mathbf{x}^{k-1})\rangle\notag\\
	\leq & \frac{1}{2}\|\mathbf{x}^{k+1} - \mathbf{x}^{k}\|^2_{\mathbf{B}^k}+\frac{1}{2}\|\mathbf{x}^{k} - \mathbf{x}^{k-1}\|^2_{\mathbf{B}^k}-\frac{1}{2}\|\mathbf{w}^{k+1}\|^2_{\mathbf{B}^k}\notag\\
    & - \frac{1}{2}\|\mathbf{x}^{k} - \mathbf{x}^{k-1}\|^2_{\mathbf{B}^{k-1}}+\frac{1}{2}\|\mathbf{x}^{k+1} - \mathbf{x}^{k}\|^2_{\mathbf{B}^{k-1}}-\frac{1}{2}\|\mathbf{w}^{k+1}\|^2_{\mathbf{B}^{k-1}}\notag\\
	& + \frac{\bar{M}}{2}\|\mathbf{x}^{k} - \mathbf{x}^{k-1}\|^2+\frac{\bar{M}}{2}\|\mathbf{x}^{k+1} - \mathbf{x}^{k}\|^2+\bar{M}\|\mathbf{x}^{k} - \mathbf{x}^{k-1}\|^2\notag\\
	& -\frac{\rho}{2}\|\mathbf{x}^{k+1}\|^2_\mathbf{L} + \frac{\rho}{2}\|\mathbf{x}^k\|^2_\mathbf{L}- \frac{\rho}{2}\|\mathbf{x}^{k+1}- \mathbf{x}^{k}\|^2_\mathbf{L}+\frac{\rho}{2}\|\mathbf{x}^{k+1}\|^2_\mathbf{L} + \frac{\rho}{2}\|\mathbf{x}^{k}- \mathbf{x}^{k-1}\|^2_\mathbf{L}. \label{0=-<>}
\end{align}
By using $\mathbf{B}^{k-1}\succ \mathbf{O}$ and \eqref{vk+1 - vk}, we obtain
\begin{align}
	&\frac{1}{2}\|\mathbf{x}^{k+1} - \mathbf{x}^{k}\|^2_{\mathbf{B}^k-\mathbf{B}^{k-1}+\rho \mathbf{L}-\bar{M}\mathbf{I}}+\frac{\rho}{2}\|\mathbf{x}^{k+1}\|^2_{\mathbf{L}}\notag\\
	\leq& \frac{1}{2}\|\mathbf{x}^{k} - \mathbf{x}^{k-1}\|^2_{\mathbf{B}^k+\rho \mathbf{L}+3\tilde{\kappa}(c_{\mathbf{B}}+\bar{M}^2)(\mathbf{B}^k)^{-1}+3\bar{M}\mathbf{I}_{Nd}} + \frac{\rho}{2}\|\mathbf{x}^k\|^2_\mathbf{L}-\frac{1}{2}\|\mathbf{w}^{k+1}\|^2_{\mathbf{B}^k+\mathbf{B}^{k-1}-3\tilde{\kappa}\mathbf{B}^k}.\label{0 geq -Bk xk+1 - xk - Bk-1 xk - xk-1}
\end{align}
Next, from \eqref{tilde Pk+1}, \eqref{ALk+1 - ALk lemma}, \eqref{vk+1 - vk} and \eqref{0 geq -Bk xk+1 - xk - Bk-1 xk - xk-1},
\begin{align}
	&\tilde{P}^{k+1}-\tilde{P}^{k}\notag\\
	&{\leq} -\|\mathbf{x}^{k+1}-\mathbf{x}^{k}\|^2_{\mathbf{B}^{k}+\frac{1}{2}\rho \mathbf{L} -\frac{\bar{M}}{2}\mathbf{I}_{Nd}}\notag\\
    &+3\tilde{\kappa} \big((c_{\mathbf{B}}+\bar{M}^2)\|\mathbf{x}^{k}-\mathbf{x}^{k-1}\|^2_{(\mathbf{B}^k)^{-1}}+\|\mathbf{w}^{k+1}\|^2_{\mathbf{B}^k}\big)\notag\\
	&+3\tilde{\kappa} (c_{\mathbf{B}}+\bar{M}^2)\big(\|\mathbf{x}^{k+1}-\mathbf{x}^{k}\|^2_{(\mathbf{B}^{k+1})^{-1}}-\|\mathbf{x}^{k}-\mathbf{x}^{k-1}\|^2_{(\mathbf{B}^k)^{-1}}\big)\notag\\
	&+\frac{\tilde{c}}{2}\|\mathbf{x}^{k+1}-\mathbf{x}^{k}\|^2_{\mathbf{B}^{k+1}-\mathbf{B}^k+\mathbf{B}^{k-1}+4\bar{M}\mathbf{I}_{Nd}+3\tilde{\kappa} (c_{\mathbf{B}}+\bar{M}^2)(\mathbf{B}^{k+1})^{-1}}\notag\\
	&+\frac{\tilde{c}}{2}\big(\rho\|\mathbf{x}^{k+1}\|^2_\mathbf{L}+\|\mathbf{x}^{k+1}-\mathbf{x}^k\|^2_{\mathbf{B}^k-\mathbf{B}^{k-1}+\rho \mathbf{L}-\bar{M}\mathbf{I}_{Nd}}\notag\\
    &-(\rho \|\mathbf{x}^k\|^2_\mathbf{L}+\|\mathbf{x}^k-\mathbf{x}^{k-1}\|^2_{\mathbf{B}^k+3\bar{M}\mathbf{I}_{Nd}+3\tilde{\kappa} (c_{\mathbf{B}}+\bar{M}^2)(\mathbf{B}^k)^{-1}+\rho \mathbf{L}})\big)\notag\\
	& \overset{\eqref{0 geq -Bk xk+1 - xk - Bk-1 xk - xk-1}}{\leq}-\|\mathbf{x}^{k+1}-\mathbf{x}^{k}\|^2_{\mathbf{B}^k+\frac{1}{2}\rho \mathbf{L} -\frac{\tilde{c}}{2}(\mathbf{B}^{k+1}-\mathbf{B}^k+\mathbf{B}^{k-1})}\notag\\
    &-\|\mathbf{x}^{k+1}-\mathbf{x}^{k}\|^2_{-(\frac{1}{2}+2\tilde{c})\bar{M}\mathbf{I}_{Nd}-\frac{3(2+\tilde{c})\tilde{\kappa}}{2} \bar{M}^2(\mathbf{B}^{k+1})^{-1}}\notag\\
	&-\frac{\tilde{c}}{2}\|\mathbf{w}^{k+1}\|^2_{\mathbf{B}^k+\mathbf{B}^{k-1}-3\tilde{\kappa}\mathbf{B}^k}+3\tilde{\kappa}\|\mathbf{w}^{k+1}\|^2_{\mathbf{B}^k}.\notag
\end{align}
This, combining with \eqref{define tilde c} and \eqref{Bk+rhoL-c/2Bk-1}, yields \eqref{tildePk+1 - tildePk lemma}.

\subsubsection{Proof of Lemma~\ref{lemma tildePk+1 geq f*}}\label{proof lemma tildePk+1 geq f*}
According \eqref{AL} and \eqref{vk+1 opt}, 
\begin{align}
	&\operatorname{AL}^{k+1}-\tilde{f}(\mathbf{x}^{k+1})-\frac{\rho}{2}\|\mathbf{x}^{k+1}\|^2_\mathbf{L}= \langle \mathbf{v}^{k+1},\mathbf{L}^{\frac{1}{2}}\mathbf{x}^{k+1} \rangle\notag\\
	=& \frac{1}{\rho} \langle \mathbf{v}^{k+1},\mathbf{v}^{k+1}- \mathbf{v}^{k}\rangle  \notag\\
	=&\frac{1}{2\rho}(\|\mathbf{v}^{k+1}\|^2-\|\mathbf{v}^{k}\|^2+\|\mathbf{v}^{k+1}-\mathbf{v}^{k}\|^2 ). \label{ALk+1 - fxk+1}
\end{align}
 
Define $\widehat{\operatorname{AL}}^{k+1}:=\operatorname{AL}^{k+1}-f^*$, $\hat{f}(\mathbf{x}):=\tilde{f}(\mathbf{x})-f^*\geq 0$, $\hat{P}^{k+1}:= \tilde{P}^{k+1}-f^*$. Summing over $k=-1,\dots,T$, we obtain $\sum_{k=-1}^T \widehat{\operatorname{AL}}^{k+1} = \frac{1}{2\rho}(\|\mathbf{v}^{T+1}\|^2-\|\mathbf{v}^{-1}\|^2)
    + \sum_{k=-1}^T (\hat{f}(\mathbf{x}^{k+1})+\frac{\rho}{2}\|\mathbf{x}^{k+1}\|^2_\mathbf{L}+\frac{1}{2\rho}\|\mathbf{v}^{k+1}-\mathbf{v}^k\|^2).$
By the initialization $\mathbf{v}^{-1}=0$, the above sum is lower bounded by zero. Therefore, the sum of $\hat{P}^{k+1}$ is also lower bounded by zero, i.e., 
	$\sum_{k=0}^T \hat{P}^{k+1}\geq 0, \quad \forall T>0$. 
Note that Lemma~\ref{lemma tildePk+1 nonincreasing} shows that $\hat{P}^{k+1}$ is nonincreasing under the parameter selections \eqref{define tilde c}--\eqref{Bk+rhoL-c/2Bk-1}. Hence, we can conclude that
\begin{equation}
	\hat{P}^{k+1}\geq 0, \quad \tilde{P}^{k+1}\geq f^*, \quad \forall k \geq 0. \label{hat Pk+1 geq 0 tildePk+1 geq f*}
\end{equation}

Next, since $\mathbf{x}^{-1}=0$ and $\mathbf{v}^{-1}=0$, we have
\begin{align}
	\tilde{P}^0 =& \operatorname{AL}^0+3\tilde{\kappa}(c_{\mathbf{B}}+\bar{M}^2)\|\mathbf{x}^0-\mathbf{x}^{-1}\|^2_{(\mathbf{B}^{0})^{-1}}+\frac{\tilde{c}}{2}(\rho \|\mathbf{x}^0\|^2_\mathbf{L}\notag\\
    &+\|\mathbf{x}^0-\mathbf{x}^{-1}\|^2_{\mathbf{B}^{0}+3\bar{M}\mathbf{I}_{Nd}+3\tilde{\kappa}(c_{\mathbf{B}}+\bar{M}^2)(\mathbf{B}^{0})^{-1}+\rho \mathbf{L}})\notag\\
	=& \operatorname{AL}^0+\|\mathbf{x}^0\|^2_{\frac{3(2+\tilde{c})\tilde{\kappa}}{2}(c_{\mathbf{B}}+\bar{M}^2)(\mathbf{B}^{0})^{-1}+\frac{\tilde{c}}{2}(2\rho \mathbf{L} +\mathbf{B}^0+3\bar{M}\mathbf{I}_{Nd})}, \label{tildeP0} \\
	\operatorname{AL}^0=&\tilde{f}(\mathbf{x}^0)+\langle \mathbf{L}^{\frac{1}{2}}\mathbf{v}^0,\mathbf{x}^0\rangle+\frac{\rho}{2}\|\mathbf{x}^0\|^2_\mathbf{L} \notag\\
	\overset{\eqref{vk+1 opt}}{=}& \tilde{f}(\mathbf{x}^0)+\langle \mathbf{L}^{\frac{1}{2}}\mathbf{v}^{-1}+\rho \mathbf{L} \mathbf{x}^0,\mathbf{x}^0\rangle +\frac{\rho}{2}\|\mathbf{x}^0\|^2_\mathbf{L}\notag\\
	=& \tilde{f}(\mathbf{x}^0)+\frac{3\rho}{2}\|\mathbf{x}^0\|^2_\mathbf{L}, \label{AL0 initialization}\\
	\mathbf{x}^0 =& -\mathbf{G}^0\nabla \tilde{f}(0) . \label{x0 initialization}
\end{align}
By incorporating \eqref{AL0 initialization} into \eqref{tildeP0} and using \eqref{define tilde c}, we have
\begin{equation}
	\tilde{P}^0\leq \tilde{f}(\mathbf{x}^0)+\|\mathbf{x}^0\|^2_{\frac{\tilde{c}(2+\tilde{c})(c_{\mathbf{B}}+\bar{M}^2)}{6}(\mathbf{B}^0)^{-1}+\frac{\tilde{c}}{2}\mathbf{B}^0+\frac{3\tilde{c}}{2}\bar{M}\mathbf{I}_{Nd}+(\tilde{c}+\frac{3}{2})\rho \mathbf{L}}. \label{tildeP0 first}
\end{equation}
From \eqref{Bk+rhoL-c/2Bk-1} and the initialization $\mathbf{G}^{-1}=\mathbf{G}^0$, 
\begin{equation}
	\frac{1+2\tilde{c}}{4}(\mathbf{G}^0)^{-1} \succeq \frac{\tilde{c}}{2}\mathbf{B}^0+(\frac{1}{2}+2\tilde{c})\bar{M}\mathbf{I}+\frac{\tilde{c}(2+\tilde{c})(c_{\mathbf{B}}+\bar{M}^2)}{6}(\mathbf{B}^0)^{-1}, \label{1/4+tildec/2 succeq}
\end{equation}
which, with $\mathbf{B}^0\succ \mathbf{O}$, further implies that $(\frac{1}{4}+\frac{\tilde{c}}{2})(\mathbf{G}^0)^{-1}\succ (\frac{1}{2}+2\tilde{c})\bar{M}\mathbf{I}_{Nd}$. Thus, combing \eqref{x0 initialization}--\eqref{1/4+tildec/2 succeq} gives
\begin{align}
	\tilde{P}^0\leq& \tilde{f}(\mathbf{x}^0)+ (\frac{3}{2}\tilde{c}+\frac{7}{4})\|\nabla \tilde{f}(0)\|^2_{\mathbf{G}^0}\notag\\
	\overset{\eqref{Bk+rhoL-c/2Bk-1}}{\leq} &\tilde{f}(\mathbf{x}^0) +\frac{(\frac{3}{2}\tilde{c}+\frac{7}{4})(\frac{1}{4}+\frac{\tilde{c}}{2})}{(\frac{1}{2}+2\tilde{c})\bar{M}}\|\nabla \tilde{f}(0)\|^2 \notag\\
	< & \tilde{f}(\mathbf{x}^0) +\frac{6\tilde{c}+7}{8\bar{M}}\|\nabla \tilde{f}(0)\|^2. \label{tildeP0 leq fx0+nabla f0}
\end{align}
Hence, from \eqref{hat Pk+1 geq 0 tildePk+1 geq f*} and \eqref{tildeP0 leq fx0+nabla f0}, we obtain Lemma~\ref{lemma tildePk+1 geq f*}.
\subsubsection{Proof of Theorem~\ref{theorem UPP-SC sublinear time-varying Bk}}\label{proof theorem UPP-SC sublinear time-varying Bk}

\textbf{(i)} First we illustrate the feasibility of the parameters in \eqref{define tilde c} and \eqref{Bk+rhoL-c/2Bk-1}. 



To see \eqref{Bk+rhoL-c/2Bk-1}, it is sufficient to show that $\frac{1}{4}\lambda_N^{\mathbf{B}}-\frac{\tilde{c}}{2}(\lambda_1^{\mathbf{B}}-\lambda_N^{\mathbf{B}}+\lambda_1^{\mathbf{B}})-(\frac{1}{2}+2\tilde{c})\bar{M}
-\frac{\tilde{c}(2+\tilde{c})}{6}(c_\mathbf{B}+\bar{M}^2)/\lambda_N^{\mathbf{B}}> 0,$
which can be transformed into the following inequality:
\begin{equation}\label{d1lambdaNb-d2-d3/lambdaNB}
	d_1\lambda_N^{\mathbf{B}}-d_2-d_3/\lambda_N^{\mathbf{B}} >0,
\end{equation}
where $d_1= \frac{1}{4}- \xi_5 \tilde{c}-\xi_6 \tilde{c}^2$, $d_2 = (\frac{1}{2}+2\tilde{c})\bar{M}$, and $d_3={(2+\tilde{c})\tilde{c}}\bar{M}^2/6$.
Since $0<\tilde{c}<({-\xi_5+\sqrt{\xi_5^2+\xi_6}})/({2\xi_6})$ implies $d_1>0$, together with \eqref{d1lambdaNb-d2-d3/lambdaNB}, we obtain $\lambda_N^{\mathbf{B}}\geq ({d_2+\sqrt{d_2^2+4d_1d_3}})/({2d_1})$. Hence, \eqref{Bk+rhoL-c/2Bk-1} is satisfied. Besides, from $\rho \geq ({9\kappa_{\mathbf{B}}\lambda_N^{\mathbf{B}}})/({\tilde{c}\lambda_{N-1}^{\mathbf{L}}})$, we have \eqref{define tilde c}.

\textbf{(ii)}
Next, we analyze the convergence result of UPP-SC. 

By multiplying both sides of \eqref{first-order opt modified} by matrix $\mathbf{J}$, since $\mathbf{J}\mathbf{L}=0$ we obtain $\mathbf{J}\nabla \tilde{f}(\mathbf{x}^{k}) + \mathbf{J}\mathbf{B}^k(\mathbf{x}^{k+1}-\mathbf{x}^{k})=0$, for $k \geq -1.$
From $\mathbf{J}\nabla \tilde{f}(\mathbf{x})=\mathbf{1}_N \otimes\big(\frac{1}{N} \sum_{i=1}^{N}\nabla\tilde{f}_i(x_i)\big)$, $\mathbf{J}\mathbf{J}=\mathbf{J}$, $\|\mathbf{J}\|=1$,
\begin{align}
	&\frac{1}{N}\|\sum_{i=1}^N \nabla \tilde{f}_i(x_i^k)\|^2 \leq (\mathbf{x}^{k+1}-\mathbf{x}^{k})^{\mathsf{T}}\mathbf{B}^k\mathbf{J}\mathbf{J}\mathbf{B}^k(\mathbf{x}^{k+1}-\mathbf{x}^{k}) \notag\\
	\leq & \|\mathbf{x}^{k+1}-\mathbf{x}^{k}\|^2_{\mathbf{B}^k}\|\mathbf{B}^k\| 
	\overset{\eqref{tildePk+1 - tildePk lemma}}{\leq} 4\|\mathbf{B}^k\|(\tilde{P}^k-\tilde{P}^{k+1}). \label{bounded gradient norm Bk}
\end{align}
Since \eqref{d1lambdaNb-d2-d3/lambdaNB} implies that $\lambda_N^{\mathbf{B}}> \frac{d_3}{d_1\lambda_N^{\mathbf{B}}}>\frac{(c_{\mathbf{B}}+\bar{M}^2)\tilde{c}(\tilde{c}+2)}{6d_1\lambda_N^{\mathbf{B}}}$, 
\begin{align}
	\mathbf{B}^k \succeq \lambda_N^{\mathbf{B}}\mathbf{I}_{Nd}&\succeq \frac{(c_{\mathbf{B}}+\bar{M}^2)\tilde{c}(\tilde{c}+2)}{6d_1}(\mathbf{B}^k)^{-1}. \label{Bk geq d3/d1}
\end{align}
It then follows from \eqref{vk+1 - vk} that
\begin{align}
	&\rho\|\mathbf{x}^{k+1}\|^2_\mathbf{L} \notag\\
	\leq& 3\tilde{\kappa}\big((c_{\mathbf{B}}+\bar{M}^2)\|\mathbf{x}^k-\mathbf{x}^{k-1}\|^2_{(\mathbf{B}^k)^{-1}}+\|\mathbf{w}^{k+1}\|^2_{\mathbf{B}^k}\big) \notag\\
	\leq& 3\tilde{\kappa}\big(2(c_{\mathbf{B}}+\bar{M}^2)\|\mathbf{x}^{k+1}-\mathbf{x}^k\|^2_{(\mathbf{B}^k)^{-1}}+\|\mathbf{w}^{k+1}\|^2_{\mathbf{B}^k}\notag\\
	&+ 3(c_{\mathbf{B}}+\bar{M}^2)\|\mathbf{w}^{k+1}\|^2_{(\mathbf{B}^k)^{-1}}\big)\notag\\
	\overset{\eqref{Bk geq d3/d1}}{\leq} & 3\tilde{\kappa}\big(\|\mathbf{x}^{k+1}-\mathbf{x}^k\|^2_{2(c_{\mathbf{B}}+\bar{M}^2)(\mathbf{B}^k)^{-1}}+ \|\mathbf{w}^{k+1}\|^2_{(1+\frac{18d_1}{\tilde{c}(\tilde{c}+2)})\mathbf{B}^k}\big). \notag
\end{align}
This, together with \eqref{Bk geq d3/d1} and \eqref{tildePk+1 - tildePk lemma}, gives
\begin{align}
	&\rho\|\mathbf{x}^{k}\|^2_\mathbf{L}\leq 2\rho\|\mathbf{x}^{k+1}\|^2_\mathbf{L}+2\rho\|\mathbf{x}^{k+1}-\mathbf{x}^k\|^2_\mathbf{L}\notag\\
	\leq & 6\tilde{\kappa}\big((1+\frac{18d_1}{\tilde{c}(\tilde{c}+2)})\|\mathbf{w}^{k+1}\|^2_{\mathbf{B}^k}\notag\\
    &+2(c_{\mathbf{B}}+\bar{M}^2)\|\mathbf{x}^{k+1}-\mathbf{x}^k\|^2_{(\mathbf{B}^k)^{-1}}\big)+2\rho\|\mathbf{x}^{k+1}-\mathbf{x}^k\|^2_\mathbf{L}\notag\\
	\overset{\eqref{Bk geq d3/d1}}{\leq} & \frac{2\tilde{c}}{3}(1+\frac{18d_1}{\tilde{c}(\tilde{c}+2)})\|\mathbf{w}^{k+1}\|^2_{\mathbf{B}^k}+\|\mathbf{x}^{k+1}-\mathbf{x}^k\|^2_{\frac{12d_1}{\tilde{c}(\tilde{c}+2)}\mathbf{B}^k+2\rho \mathbf{L}}\notag\\
	\overset{\eqref{tildePk+1 - tildePk lemma}}{\leq}& \Big(\frac{4}{3(1-\tilde{c})}(1+\frac{18d_1}{\tilde{c}(\tilde{c}+2)})+8(\frac{6d_1}{\tilde{c}(\tilde{c}+2)}+1)\Big)(\tilde{P}^k-\tilde{P}^{k+1}).\notag
\end{align}
Substituting this and \eqref{bounded gradient norm Bk} into the definition of $e(T)$ in \eqref{eT time-varying Bk}, and using \eqref{tildePk+1 - tildePk lemma}, \eqref{hat Pk+1 geq 0 tildePk+1 geq f*} and \eqref{tildeP0 leq fx0+nabla f0} yields
\begin{align}
	&T\times e(T)\overset{\eqref{eT time-varying Bk}}{\leq} \sum_{k=0}^{T-1}(\frac{1}{N}\|\sum_{i=1}^N \nabla \tilde{f}_i(x_i^k)\|^2 + \rho\|\mathbf{x}^{k}\|^2_\mathbf{L}) \notag\\
	&{\leq}  \sum_{k=0}^{T-1}\Big(\frac{4}{3(1-\tilde{c})}(1+\frac{18d_1}{\tilde{c}(\tilde{c}+2)})+8(\frac{6d_1}{\tilde{c}(\tilde{c}+2)}+1)\Big)(\tilde{P}^k-\tilde{P}^{k+1})\notag\\
	&\overset{\eqref{tildePk+1 - tildePk lemma}}{\leq} \tilde{C}_2(\tilde{P}^0-\tilde{P}^{T})\notag\\
    &\overset{\eqref{hat Pk+1 geq 0 tildePk+1 geq f*}}{\leq} \tilde{C}_2(\tilde{P}^0-f^*)\notag\\
	&\overset{\eqref{tildeP0 leq fx0+nabla f0}}{\leq} \tilde{C}_2(\tilde{f}(\mathbf{x}^0)-f^*+\frac{6\tilde{c}+7}{8\bar{M}}\|\nabla \tilde{f}(0)\|^2).  \label{T times eT Bk} 
\end{align}
Therefore, we obtain \eqref{epsilon leq tildeC1C2/T}.

\subsection{Proof of optimal communication complexity bound}\label{optimal convergence}
\subsubsection{Proof of Theorem~\ref{theorem UPP-SC-OPT}}\label{proof theorem UPP-SC-OPT}
Since $\bB^k=\bB=\frac{1}{\mu}\bI-\rho\bL$, the deductions in \eqref{rho xk+1 leq 3 tildekappa} holds for $c_{\bB}=0$. Based on this, the condition in \eqref{Bk+rhoL-c/2Bk-1} becomes $\frac{1-2\tilde{c}}{4}\mathbf{B}+\frac{1}{4}\rho \mathbf{L}-(\frac{1}{2}+2\tilde{c})\bar{M}\mathbf{I}_{Nd}-\frac{\tilde{c}(2+\tilde{c})\bar{M}^2}{6}(\mathbf{B})^{-1} \succeq \mathbf{O}_{Nd}.$
This can be satisfied by $\frac{1-2\tilde{c}}{4}\lambda_N^\bB-(\frac{1}{2}+2\tilde{c})\bar{M}-\frac{\tilde{c}(2+\tilde{c})\bar{M}^2}{6}/\lambda_N^\bB \geq 0.$
From $\tilde{c}\in(0,\frac{1}{2})$, we have $1-2\tilde{c}>0$, together with $\lambda_N^{\mathbf{B}}\geq \frac{d_2+\sqrt{d_2^2+4d_4d_3}}{2d_4} , \text{ with } d_4= \frac{1}{4}- \frac{\tilde{c}}{2}$, we verify the feasibility of \eqref{rho xk+1 leq 3 tildekappa}. The rest proof is the same as the deductions in Appendix~\ref{proof theorem UPP-SC sublinear time-varying Bk}, and thus we omit here.

\subsubsection{Proof of Theorem~\ref{theorem optimal communication complexity bound}}\label{proof theorem optimal communication complexity bound}
First, we state the properties of the Chebyshev polynomial that is similar with Theorem 7 in \cite{xu2020accelerated}. Consider the normalized Laplacian $\mathbf{H}$ with a spectrum in $[1-c_1^{-1},1+c_1^{-1}]$, the Chebyshev polynomial $P_{K}(x)=1-\frac{T_{\tau}(c_1(1-x))}{T_{K}(c_1)}$ introduced in \cite{auzinger2011iterative,arioli2014chebyshev} is the solution of the problem: 
\begin{equation*}
	\min_{p\in \mathbb{P}^{K},p(0)=0} \max_{x\in[1-c_1^{-1},1+c_1^{-1}]} |p(x)-1|. 
\end{equation*}
Accordingly, we have 
\begin{equation*}
	\max_{x\in[1-c_1^{-1},1+c_1^{-1}]} |P_K(x)-1| \leq 2\frac{c_0^K}{1+c_0^{2K}},
\end{equation*}
where $c_0=\frac{1-\sqrt{\gamma}}{1+\sqrt{\gamma}}$. By setting $K=\tau=\lceil \frac{1}{\sqrt{\gamma}} \rceil$, we obtain 
$c_0^K=(\frac{1-\sqrt{\gamma}}{1+\sqrt{\gamma}})^{\lceil \frac{1}{\sqrt{\gamma}} \rceil} \leq (\frac{1-\sqrt{\gamma}}{1+\sqrt{\gamma}})^{\frac{1}{\sqrt{\gamma}}} \leq e^{-1}$. 
It follows that 
\begin{equation}
	2\frac{c_0^K}{1+c_0^{2K}}=\frac{2}{c_0^K+(c_0^{K})^{-1}}\leq \frac{2}{e+e^{-1}}<1.
\end{equation}
Subsequently, we obtain the following property for Chebyshev polynomial: 
$1-\frac{2}{e+e^{-1}}\leq 1-2\frac{c_0^K}{1+c_0^{2K}}\leq \lambda_{N-1}^{P_{\tau}(\mathbf{H})}\leq \lambda_{1}^{P_{\tau}(\mathbf{H})}
    \leq 1+2\frac{c_0^K}{1+c_0^{2K}} \leq 1+ \frac{2}{e+e^{-1}}.$
Then, we have 
\begin{equation}
	\kappa_\mathbf{L}=\frac{{\lambda}_1^{P_{\tau}(\mathbf{H})}}{\lambda_{N-1}^{P_{\tau}(\mathbf{H})}}\leq (\frac{e^{\frac{1}{2}}+e^{-\frac{1}{2}}}{e^{\frac{1}{2}}-e^{-\frac{1}{2}}})^2. \label{underline lambdaL}
\end{equation}
Combining \eqref{theorem UPP-SC-OPT} and $\frac{1}{\mu}>\frac{9\lambda_N^\bB}{\tilde{c}}\Big((2\kappa_\bL-1)+\sqrt{(2\kappa_\bL-1)^2-1}\Big)$, we have $\frac{1}{\mu}= \mathcal{O}(\kappa_\mathbf{L}\bar{M})=\mathcal{O}(\bar{M})$. 
Since UPP-SC-OPT conducts $\lceil \frac{1}{\sqrt{\gamma}}\rceil$ communication rounds at each iteration, we obtain the communication complexity bound as \eqref{Tc leq}.

			\bibliographystyle{IEEEtran}
			\bibliography{reference} 
			
		\begin{IEEEbiography}
			[{\includegraphics[width=1in,height=1.25in,clip,keepaspectratio]{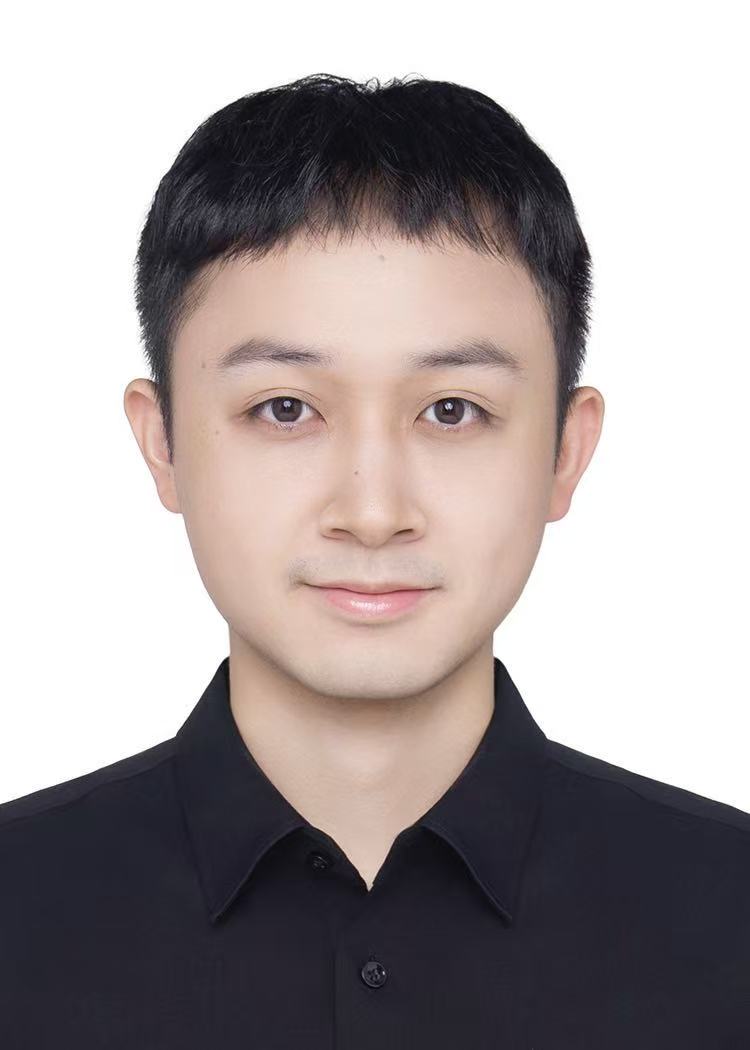}}]{Zichong Ou} received the B.S. degree in Measurement and Control Technology and Instrument from Northwestern Polytechnical University, Xi'an, China, in 2020. He is now pursuing the Ph.D degree from the School of Information Science and Technology at ShanghaiTech University, Shanghai, China. His research interests include distributed optimization, large-scale optimization, and their applications in IoT and machine learning.
		\end{IEEEbiography}
		
		\begin{IEEEbiography}
			[{\includegraphics[width=1in,height=1.25in,clip,keepaspectratio]{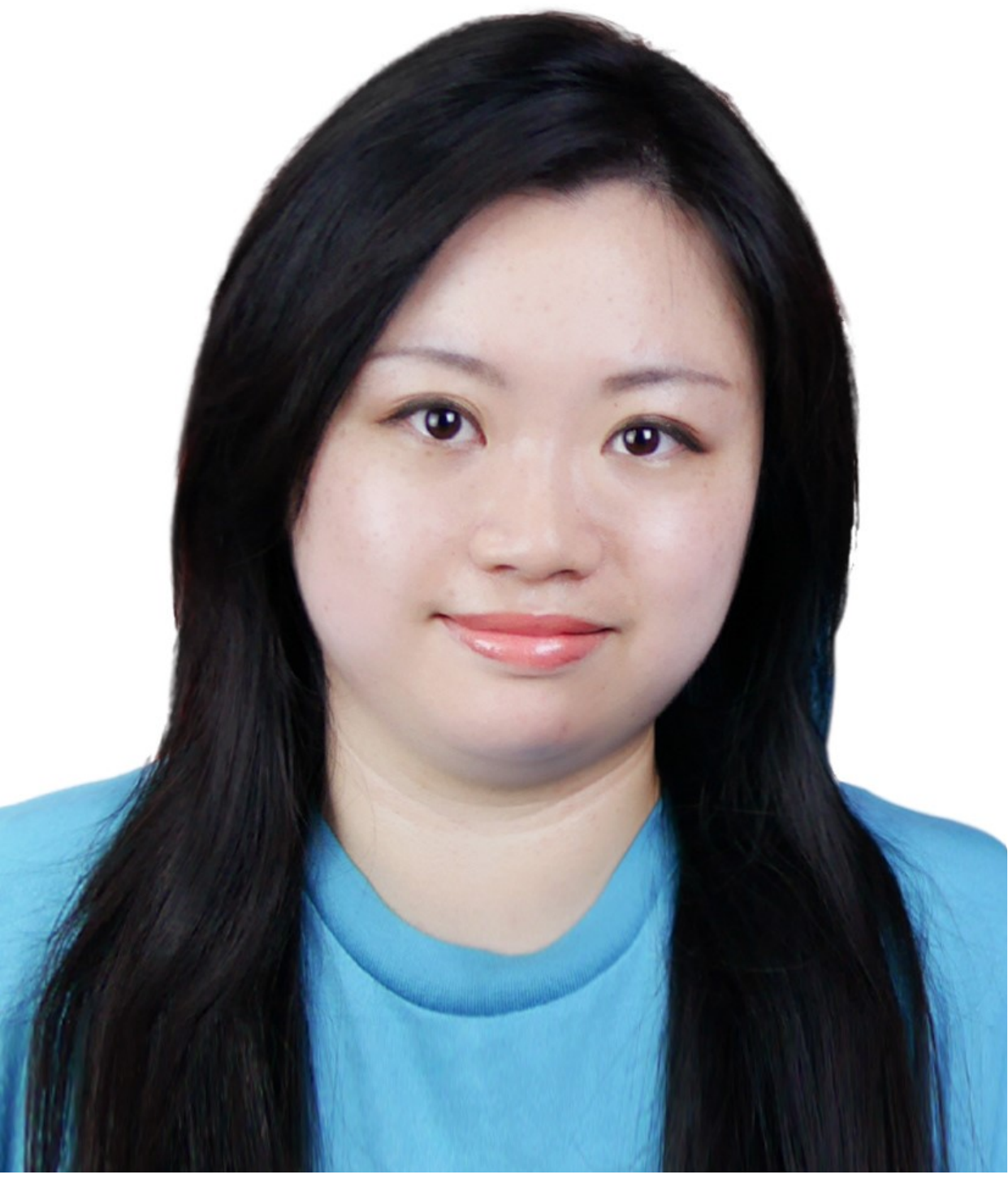}}]
			{Jie Lu} (Member, IEEE) received the B.S. degree in information engineering from Shanghai Jiao Tong University, Shanghai, China, in 2007, and the Ph.D. degree in electrical and computer engineering from the University of Oklahoma, Norman, OK, USA, in 2011. 
			She is currently an Associate Professor with the School of Information Science and Technology, ShanghaiTech University, Shanghai, China. Before she joined ShanghaiTech University in 2015, she was a Postdoctoral Researcher with the KTH Royal Institute of Technology, Stockholm, Sweden, and with the Chalmers University of Technology, Gothenburg, Sweden from 2012 to 2015. Her research interests include distributed optimization, optimization theory and algorithms, learning-assisted optimization, and networked dynamical systems.
		\end{IEEEbiography}

		\end{document}